\documentclass[11pt]{amsart}
\usepackage[leqno]{amsmath}
\usepackage{amssymb}
\usepackage[foot]{amsaddr}

\usepackage{enumerate}
\usepackage{color}
\usepackage{subfigure}
\usepackage{graphicx}
\usepackage[all]{xy}
\usepackage[colorlinks=true,linkcolor=NavyBlue,urlcolor=RoyalBlue,citecolor=PineGreen]{hyperref}
\usepackage[usenames,dvipsnames]{xcolor}
\usepackage{aliascnt}
\usepackage{cleveref}

\usepackage{microtype}
\usepackage{centernot}
\usepackage{stmaryrd}

\usepackage{bm}

\numberwithin{equation}{section}
\numberwithin{figure}{section}

\usepackage{setspace}

\newtheorem{theorem}{Theorem}[section]

\newaliascnt{remark}{theorem}
\newtheorem{remark}[remark]{Remark}
\aliascntresetthe{remark}
\newaliascnt{lemma}{theorem}
\newtheorem{lemma}[lemma]{Lemma}
\aliascntresetthe{lemma}
\newaliascnt{proposition}{theorem}
\newtheorem{proposition}[proposition]{Proposition}
\aliascntresetthe{proposition}
\newaliascnt{corollary}{theorem}

\aliascntresetthe{corollary}
\newaliascnt{problem}{theorem}

\aliascntresetthe{problem}
\newaliascnt{definition}{theorem}
\newtheorem{definition}[definition]{Definition}
\aliascntresetthe{definition}
\newaliascnt{example}{theorem}

\aliascntresetthe{example}
\newaliascnt{assumption}{theorem}

\aliascntresetthe{assumption}

\crefname{theorem}{Theorem}{Theorems}
\crefname{lemma}{Lemma}{Lemmas}
\crefname{proposition}{Proposition}{Propositions}
\crefname{corollary}{Corollary}{Corollaries}
\crefname{definition}{Definition}{Definitions}
\crefname{remark}{Remark}{Remarks}
\crefname{footnote}{footnote}{footnotes}
\Crefname{footnote}{Footnote}{Footnotes}
\Crefname{theorem}{Theorem}{Theorems}
\Crefname{lemma}{Lemma}{Lemmas}
\Crefname{proposition}{Proposition}{Propositions}
\Crefname{corollary}{Corollary}{Corollaries}
\Crefname{definition}{Definition}{Definitions}
\Crefname{remark}{Remark}{Remarks}

\newcounter{proofstep}
\newcounter{stepuid}

\crefname{proofstep}{Step}{Steps}
\Crefname{proofstep}{Step}{Steps}

\newcommand{\steplabel}[2]{
  \stepcounter{stepuid}
  \setcounter{proofstep}{\numexpr#1-1\relax}
  \refstepcounter{proofstep}\label{#2}}

\newcommand{\C}{\mathbf{C}}

\newcommand{\E}{\mathbf{E}}
\newcommand{\F}{\mathbf{F}}

\newcommand{\N}{\mathbf{N}}
\newcommand{\Z}{\mathbf{Z}}
\newcommand{\p}{\mathbf{P}}

\newcommand{\s}{\mathbf{S}}
\newcommand{\T}{\mathbf{T}}

\newcommand{\CD}{\mathcal {D}}

\newcommand{\CF}{\mathcal {F}}

\newcommand{\CL}{\mathcal {L}}

\newcommand{\CS}{\mathcal {S}}
\newcommand{\CT}{\mathcal {T}}

\newcommand{\CG}{\mathcal {G}}

\newcommand{\Dis}{\mathrm{Dis}}

\newcommand{\Bin}{\mathrm{Bin}}
\newcommand{\Ber}{\mathrm{Ber}}
\newcommand{\Gam}{\mathrm{Gamma}}

\newcommand{\one}{{\bf 1}}

\newcommand{\wt}{\widetilde}

\newcommand{\ol}{\overline}

\newcommand{\giv}{\,|\,}

\newcommand{\eps}{\epsilon}

\begin{document}

\title{On the $\ell^2$ distortion of random triangulations}

\author{Jason Miller$^1$}
\address{University of Cambridge}
\email{$^1$\url{jpm205@cam.ac.uk}}
\author{Julian Ransford$^2$}
\email{$^2$\url{jr2012@cam.ac.uk}}
\author{Fredy Yip$^3$}
\email{$^3$\url{fy276@cam.ac.uk}}

\begin{abstract}
For each $n \in \N$, let $\CT_n$ be a uniformly random (rooted, Type~I) triangulation of the sphere with $n$ vertices, viewed as a metric space equipped with its graph distance. We show that for every $\delta>0$, with probability tending to $1$ as $n \to \infty$, every embedding of $\CT_n$ into a separable Hilbert space has distortion at least $(\log n)^{1/4-\delta}$.
\end{abstract}

\date{}
\maketitle

\setcounter{tocdepth}{1}
\tableofcontents

\parindent 0 pt
\setlength{\parskip}{0.20cm plus1mm minus1mm}
 
\section{Introduction}
\label{sec:intro}

\subsection{Overview}
\label{subsec:overview}

Suppose that $(X,d)$ is a finite metric space. The \emph{$\ell^2$ distortion} $\Dis(X)$ of $X$ is the infimum of the set of numbers $L \ge 1$ for which there exists a function $f:X \to \ell^2(\N)$ such that
\[ d(x,y) \le \| f(x) - f(y) \|_2 \le L d(x,y),\]
for all $x,y \in X$ (here and below we abbreviate $\ell^2 = \ell^2(\N)$). Since $X$ is finite, a compactness argument shows that this infimum is attained. Consequently, $X$ can be isometrically embedded into $\ell^2$ if and only if $\Dis(X)=1$, and so the distortion is a measure of how close $X$ is to being isometrically embeddable into $\ell^2$.

It was shown by Bourgain \cite{bourgain1985lip} that if $n = |X| \ge 2$, then $\Dis(X) = O(\log n)$ (that is, there is a universal constant $C>0$ such that for every $n$-point metric space $X$ with $n \ge 2$, $\Dis(X) \le C \log n$). Rao \cite{rao1999planar} later showed that if $X$ is the vertex set of a connected planar graph with $n \ge 2$ vertices, equipped with its shortest-path metric, then $\Dis(X) = O(\sqrt{\log n})$.  Specific examples of planar graphs for which the asymptotics of $\Dis(X)$ are known include:
\begin{itemize}
\item If $B_n$ is the complete binary tree with $n$ vertices (so that $n+1$ is a power of $2$), then $\Dis(B_n) = \Theta(\sqrt{\log \log n})$ as shown by Bourgain \cite{bourgain1986trees}.  The matching upper bound was extended to arbitrary trees by Matou\v{s}ek \cite{matousek1999trees}, who showed that every $n$-vertex tree embeds into $\ell^2$ with distortion $O(\sqrt{\log \log n})$ (the lower bound of course does not extend, e.g., a path embeds isometrically).
\item If $D_k$ is the level-$k$ diamond graph (see \Cref{fig:diamond_laakso}), which has $n_k$ vertices with $\log n_k = \Theta(k)$, then $\Dis(D_k) = \Theta(\sqrt{k}) = \Theta(\sqrt{\log n_k})$, showing that Rao's upper bound is sharp \cite{nr2003diamond}.
\item If $L_k$ is the level-$k$ Laakso graph (see \Cref{fig:diamond_laakso}), which has $n_k$ vertices with $\log n_k = \Theta(k)$, then $\Dis(L_k) = \Theta(\sqrt{k}) = \Theta(\sqrt{\log n_k})$, showing that there is a sequence of graphs whose doubling constants are uniformly bounded for which Rao's bound is attained up to a constant factor \cite{gkl2003laakso}.
\end{itemize}
Both the diamond and Laakso graphs are constructed via an iterative procedure, illustrated in \Cref{fig:diamond_laakso}, and are ``fractal-like''.

\begin{figure}
\includegraphics[scale=0.8]{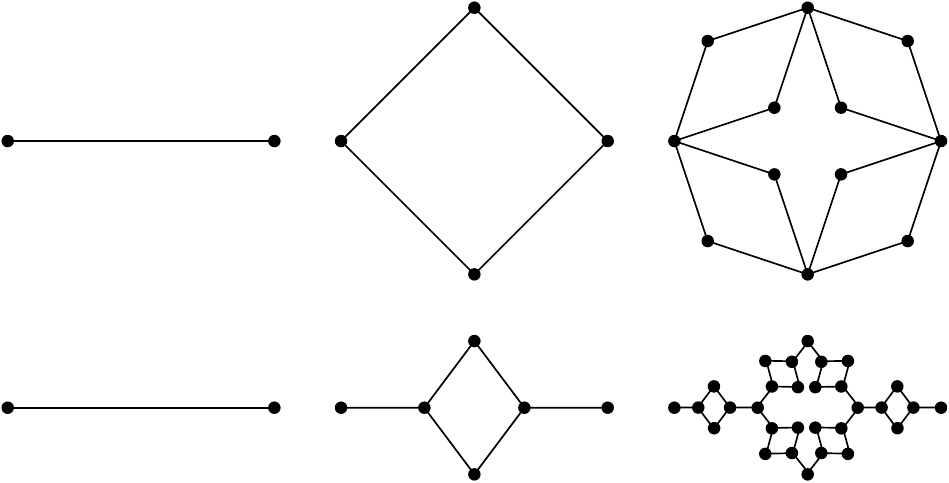}
\caption{\label{fig:diamond_laakso} {\bf Top:} The diamond graphs $D_0$, $D_1$, $D_2$.  {\bf Bottom:} The Laakso graphs $L_0$, $L_1$, $L_2$.  (The illustrations are not exactly to scale.) The graphs $D_0$ and $L_0$ each consist of a single edge, and given $D_{k-1}$ (resp.\ $L_{k-1}$), the graph $D_k$ (resp.\ $L_k$) is obtained by replacing each edge of $D_{k-1}$ (resp.\ $L_{k-1}$) with a copy of $D_1$ (resp.\ $L_1$), the two endpoints of the edge being identified with the leftmost and rightmost vertices of this copy (as drawn in the figure).}
\end{figure}

The purpose of this work is to study how the $\ell^2$ distortion of a \emph{typical} planar map (defined below) behaves as its size $n$ grows.  Are typical planar maps tree-like, so that the typical value of the distortion is $O(\sqrt{\log \log n})$, or are they more like the diamond or Laakso graphs and have typical distortion $\Theta(\sqrt{\log n})$?  We will use uniformly random triangulations of the sphere as our model of a typical planar map (we emphasize that this is not the same as the uniformly random planar \emph{graph} with $n$ vertices, which is another natural notion of a typical planar graph). We will establish that the $\ell^2$ distortion grows much faster than in the case of the binary tree. However, our lower bound of $(\log n)^{1/4-o(1)}$ is of strictly smaller order than Rao's upper bound of $O(\sqrt{\log n})$, and we leave open whether the true typical growth rate is strictly slower than Rao's bound, or matches it.

A triangulation is a type of \emph{planar map}, which is a finite connected multigraph (loops and multiple edges being allowed) together with an embedding into the sphere in which edges intersect only at common endpoints.  Two planar maps are considered to be equivalent if there exists an orientation-preserving homeomorphism of the sphere which takes one to the other (and which, for rooted maps as defined in \Cref{subsec:main_result}, takes the root edge of one to the root edge of the other, preserving its orientation). The \emph{faces} of a planar map are the connected components of the complement of the union of its edges and vertices. A planar map is a \emph{triangulation} if every face has degree three, meaning that it is incident to three edge-sides, an edge being counted twice if the same face lies on both of its sides.  Since there are only finitely many rooted triangulations with a fixed number of vertices, one can naturally consider the uniform probability measure on these triangulations.  This will be the model to which our main result applies.  We expect that our main result also holds for random $p$-angulations for any fixed $p \ge 3$.

The study of random planar maps is rooted in the work of Tutte \cite{tutte1962census1,tutte1962census2,tutte1962census3,tutte1963census4} and Mullin \cite{m1967treebijection}, which focused on the enumeration of planar maps.  A major breakthrough was made independently by Le Gall \cite{lg2013bm} and Miermont \cite{miermont2013bm}, who proved that uniformly random $p$-angulations with $n$ faces (with $p=3$ or $p$ even in \cite{lg2013bm} and with $p=4$ in \cite{miermont2013bm}), with distances rescaled by $c_p n^{-1/4}$ for a suitable constant $c_p>0$, converge in distribution in the Gromov--Hausdorff topology to a random compact metric space called the \emph{Brownian sphere} (also known as the Brownian map).  More recently, a number of other random planar map models have been studied with bijective techniques (see, e.g., \cite{kmsw2019bipolar, lsw2024schnyder, she2016qginventory}). We refer to \cite{miermont2014aspects} for a survey on the subject, and to \cite{adj1997physics} for a review from the physics literature.

Motivated by the goal of linking statistical physics models defined on random planar maps with those defined on planar lattices, a number of recent works have established the convergence of random planar maps and related discrete random surfaces to Liouville quantum gravity, when these are embedded into the plane via conformal-type embeddings (see, e.g., \cite{gms2020pvbm, gms2021matedcrt,hs2023cardy}).  This article approaches the subject from a different angle and is, to the best of our knowledge, the first to consider bi-Lipschitz embeddings of random planar maps into Hilbert space.  One motivation for this is that $\ell^2$ embeddings are related to $\ell^1$ embeddings (since every finite subset of $\ell^2$ embeds isometrically into $\ell^1$, the $\ell^1$ distortion of a finite metric space is always at most its $\ell^2$ distortion) which, in turn, are closely connected to various algorithmic applications in computer science \cite{llr1995algorithms}.  For instance, the planar case of a conjecture of Gupta, Newman, Rabinovich, and Sinclair \cite{gnrs2004cuts} (the so-called GNRS conjecture) asserts that there exists a constant $C > 0$ such that the shortest-path metric of every planar graph with non-negative edge weights can be embedded into $\ell^1$ with distortion at most~$C$ (i.e., $C$ does not depend on the graph).

\subsection{Main result}
\label{subsec:main_result}

We now turn to stating our main result.  Throughout, all planar maps are rooted, meaning that they are equipped with a distinguished oriented edge, called the root edge. The initial vertex of this edge is called the root vertex (or simply the root). We shall only consider Type~I triangulations, meaning that loops and multiple edges are allowed.  Recall that Type~II triangulations are those with no loops (but possibly with multiple edges) and that Type~III triangulations are those which are simple. Our main result is the following.

\begin{theorem}
\label{thm:main}
Let $\CT_n$ be a uniformly random (rooted Type~I) triangulation of the sphere with $n$ vertices, viewed as a metric space equipped with its graph distance. Then for any $\delta>0$,
\[\p(\Dis(\CT_n) \ge (\log n)^{1/4-\delta}) \to 1 \quad\text{as}\quad n \to \infty.\]
\end{theorem}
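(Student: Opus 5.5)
We will follow the diamond/Laakso strategy of Newman--Rabinovich and Gupta--Krauthgamer--Lee. We show that $\CT_n$ contains, with high probability, a multiscale configuration that behaves like a diamond graph with $k \approx (\log n)^{1/2-o(1)}$ levels. The short-diagonal (``Enflo-type'') Poincar\'e inequality then forces distortion of order $\sqrt{k}$.

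\textbf{Deterministic input.} Recall the inequality valid in Hilbert space: for any quadrilateral $x_1,x_2,x_3,x_4$,
\[ \|f(x_1)-f(x_3)\|^2+\|f(x_2)-f(x_4)\|^2 \le \sum_{i=1}^4 \|f(x_i)-f(x_{i+1})\|^2 .\]
Iterating this over $k$ levels of nested ``approximate diamonds'' gives the following. Suppose that at each scale $r_j = \lambda^{-j} r_0$, $j=0,\dots,k$, we have collections of quadruples $(a,c;b,d)$ in the metric space with three properties: $d(b,d) \ge \epsilon\, d(a,c)$; each side path $a\to b\to c$ is approximately geodesic; and the sides at level $j$ are themselves diagonals of quadruples at level $j+1$. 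Then any embedding satisfies $L^2 \gtrsim \epsilon^2 k$ up to losses from the approximation errors. If the errors are additive at each level, with size $\eta\,r_j$, we need $k\eta \ll 1$.

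Our first step is to formulate this as a deterministic lemma with explicit error tolerances. It should allow a deficit $d(a,b)+d(b,c) \le (1+\eta)\,d(a,c)$ and a branching that is only approximate, using averaging over several choices at each level rather than an exact tree structure.

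\textbf{Probabilistic input.} Next we show that in $\CT_n$ such configurations exist at $k$ levels simultaneously. The scale range available is $n^{1/4}$ in graph distance, so there are about $\log n$ dyadic scales. The Brownian sphere is not exactly diamond-like: near a typical geodesic it has ``thin'' regions. We therefore expect to find a good quadruple at a given scale only with a small but fixed probability $p(\epsilon,\eta)$. This holds because, by the convergence of $\CT_n$ to the Brownian sphere (Le Gall) and the local scale invariance, every pattern of positive probability appears in the continuum. The concrete pattern needed is two almost-geodesics with close endpoints that are macroscopically separated in the middle, i.e.\ a ``bubble'' around a geodesic.

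Using approximate independence across well-separated scales (spatial Markov property / peeling, or the Markov property of the Brownian snake along a geodesic), we then run a branching selection. Along a geodesic at scale $r$, we look for sub-segments at scale $r/\lambda$ that admit a good bubble. There are many sub-segments, so with high probability many do. We choose the parameters so that $\eta \approx k^{-1}$ and $\epsilon$ is fixed. The price is that $p(\epsilon,\eta)$ decays polynomially in $1/\eta$, so each level consumes about $\log k$ dyadic scales plus a factor $\log(1/p)$.

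\textbf{Exponent bookkeeping and main obstacle.} The exponent $1/4$ should come from this bookkeeping. With levels spaced by $\lambda$ satisfying $\log\lambda \approx k^{1+o(1)}$, which is needed for the errors and the decorrelation, the constraint $k\log\lambda \le \log n$ gives $k^2 \lesssim \log n$. Hence $k \approx (\log n)^{1/2-o(1)}$ and $L \gtrsim \sqrt{k} = (\log n)^{1/4-o(1)}$.

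The main obstacle is the probabilistic step. We need quantitative, uniform-in-scale control of the probability that a near-geodesic segment of $\CT_n$ admits a bubble of relative width $\epsilon$ with geodesic defect $\eta$. We also need enough independence across nested scales to make the branching argument work with high probability. We would first try to prove this on the Brownian sphere using its snake construction and geodesics to the root (Le Gall's cactus/geodesic-star description), and then transfer it to $\CT_n$. The transfer would use Gromov--Hausdorff convergence at the coarsest scales together with local limits (the UIPT) and scaling for the finer scales.
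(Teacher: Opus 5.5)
There is a genuine gap, and it lies exactly where you locate the exponent.

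\textbf{The deterministic lemma does not fit the structures available.} Your lemma gives $L^2\gtrsim\epsilon^2k$ only if, at every level, the sides are (up to the defect $\eta$) diagonals of next-level quadruples. Then each level contributes about $\epsilon^2$ to the iterated short-diagonals sum. The triangulation does not supply this.

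\begin{enumerate}
\item A bubble of length $T$ whose ends are pinched to within $\eta T$ must start from boundary height about $\eta^2T^2$, since boundary length scales like distance squared.
\item By \eqref{eq:bubble_lifetime} it lives for time $T$ with probability $\asymp\eta^2$.
\item Each failed attempt uses up a length $\asymp\eta T$ of the geodesic.
\item Hence such bubbles cover only a fraction $\asymp\eta$ of a segment, with boundedly many in a segment of length $T/\eta$ (compare \Cref{prop:no_good_bubbles}, where $\eta\approx\eps$).
\end{enumerate}

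So a level contributes $\asymp\eta\epsilon^2$ to your sum, not $\epsilon^2$. Equivalently, it lowers the Lipschitz ratio by only $\asymp\eta\epsilon^2/L$, while the two pinches cost $\asymp L\eta^2$. This is the paper's $s=\frac{D\eps}{16RL}-2L\eps^{2-4u}$.

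With your choice $\eta\approx 1/k$, the total gain over $k$ levels is $O(\epsilon^2)$, so nothing is proved. The correct balance forces $L\lesssim\eta^{-1/2}$ and about $\eta^{-2}$ levels, each costing $\log(1/\eta)$ scales. That is where the exponent $1/4$ comes from.

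\textbf{The bookkeeping is inconsistent.} Combined with your lemma, ``$\log k$ dyadic scales per level'' would give $(\log n)^{1/2-o(1)}$, i.e.\ Rao's bound, which the paper leaves open. Your replacement $\log\lambda\approx k^{1+o(1)}$ is asserted without any mechanism. It amounts to lumping $\approx k$ sparse levels of coverage $1/k$ together. But then the lemma has to be redone for sparse, Laakso-type structures. That is what the recursion for $\chi_{L,\eps}$ in \Cref{lem:laakso_diamond} and the tail-bound induction in \Cref{prop:distortion_rectangle_lhpt} do.

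\textbf{The probabilistic step is absent, not just hard.} Convergence to the Brownian sphere is qualitative. It gives neither uniform estimates for events whose probability vanishes with $\eta$, over many nested scales far below the macroscopic one, nor the cross-scale independence you need. Two biases must be handled explicitly. The bottom side of a good bubble, where the next level must be built, lies in a region biased by conditioning on goodness. Conditioning on the absence of good bubbles is biased too.

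The paper never uses the continuum; since the bound is logarithmic, a mesoscopic window suffices. Its ingredients are:
\begin{enumerate}
\item It works in the LHPT, whose skeleton consists of i.i.d.\ Galton--Watson trees with an explicitly iterable generating function (\Cref{lem:generating_function}).
\item It splits each bubble into independent bottom and top explorations to get the decoupling bound of \Cref{lem:conditional_prob_given_good}.
\item It adds an independent Bernoulli failure to obtain \Cref{lem:conditional_prob_given_failure}.
\item It transfers the result to a hull of radius $n^{1/8}$ in the UIPT, and then to $\CT_n$, via \Cref{lem:lhpt_to_uipt,lem:uipt_to_sphere}.
\item It uses \Cref{lem:slice_distances} to ensure that local distances agree with those of the whole map.
\end{enumerate}
Your plan has no substitute for any of these.
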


\subsection{Outline}
\label{subsec:outline}

We now provide a detailed outline of the proof of \Cref{thm:main}.  The main step is to show the existence of Laakso-like structures (recall \Cref{fig:diamond_laakso}) in the infinite triangulations which arise as (iterated) local limits of $\CT_n$, suitably re-rooted.  To make this precise, we first recall the constructions of the UIPT \cite{as2003uipt} and the LHPT \cite{cl2019fpp}, and we briefly describe the UIHPT \cite{ac2015uihpt}.

\subsubsection{The UIPT} Suppose that $\CT_n$ is a uniformly random triangulation of the sphere with $n$ vertices which, we recall, comes equipped with a root.  For an integer $r \ge 1$, the ball $B_r(\CT_n)$ of radius $r$ is the planar map consisting of all of the faces which are incident to at least one vertex at distance at most $r-1$ from the root, together with all of the edges and vertices which are incident to such a face.  (Note that the vertex set of $B_r(\CT_n)$ is then exactly the set of vertices at distance at most $r$ from the root.  It is important here to include the edges of the faces in question rather than only those edges which are incident to a vertex at distance at most $r-1$, since a face of $B_r(\CT_n)$ can have an edge both of whose endpoints are at distance exactly $r$.) Now let $o_n$ be a vertex selected uniformly at random among the vertices of $\CT_n$ distinct from the root, independently of everything else.  The \emph{hull} of radius $r$, denoted by $B_r^{\bullet}(\CT_n)$, is the union of $B_r(\CT_n)$ and all connected components of the complement of $B_r(\CT_n)$ in the sphere that do not contain $o_n$ (if $o_n \in B_r(\CT_n)$, then the hull is defined to be the entire planar map).  We write $\partial B_r^{\bullet}(\cdot)$ for the boundary of the hull.  It is a simple cycle (unless the hull is the entire planar map), and we refer to it as such below. Then there exists a random infinite triangulation $\CT_{\infty}$ of the plane such that for any $r \ge 1$ and any finite planar map $\mathfrak{M}$,
\[\lim_{n \to \infty} \p(B_r^{\bullet}(\CT_n)=\mathfrak{M})=\p(B_r^{\bullet}(\CT_{\infty})=\mathfrak{M}).\]
See \cite[Section 2.4]{cl2019fpp} (see just below for the definition of the hull of an infinite triangulation).  The triangulation $\CT_{\infty}$ is called the \emph{Uniform Infinite Planar Triangulation} (UIPT). It was first constructed by Angel and Schramm in \cite{as2003uipt} for Type~II and Type~III triangulations.  See \cite[Proposition 6.2]{stephenson2018uipt} for the case of Type~I triangulations. In many situations, the UIPT is easier to work with than $\CT_n$. Indeed, the UIPT can be explored face by face in a Markovian manner without needing to condition on the total volume, using a procedure known as the \emph{peeling process}. See \cite{curien2023peeling} for a detailed survey on the peeling process. For our purposes, the crucial property of the UIPT is that the hull annuli $B_s^{\bullet}(\CT_{\infty})\setminus B_r^{\bullet}(\CT_{\infty})$ for $s>r$ can be encoded by a cyclic sequence of trees with an explicit branching structure, together with independent Boltzmann triangulations filling the regions between the branches of these trees.  This encoding is called the \emph{skeleton decomposition} \cite{krikun2005uipt,cl2019fpp}.

\subsubsection{The LHPT and UIHPT} Beyond the UIPT, there are two further infinite triangulations which describe the UIPT near the boundary of a large hull.  The first of them, the LHPT, will play an important role in this paper. Analogously to the finite case, for an infinite triangulation, the hull of radius $r$ is the union of the ball of radius $r$ and all finite connected components of the complement of the ball (it can be shown that the UIPT is almost surely one-ended, i.e., for every $r \ge 1$, the complement of the ball of radius $r$ has a unique infinite component). Let $\wt{B}_r^{\bullet}(\CT_{\infty})$ be the hull of radius $r$ in the UIPT, re-rooted at a uniformly random edge on the boundary of the hull, oriented so that the complement is on the left of the edge. Then there exists a random infinite triangulation of the half-plane $\CL$ such that
\[\wt{B}_r^{\bullet}(\CT_{\infty}) \to \CL,\]
as $r \to \infty$, in distribution, in the sense of local limits of rooted planar maps. See \cite[Sections 3.1 and 3.2]{cl2019fpp} for more details. The triangulation $\CL$ is called the \emph{Lower Half-Plane Triangulation} (LHPT). It is a triangulation of the half-plane, in the sense that all faces are triangles except for one (the boundary face), which has infinite degree.  By contrast, the Uniform Infinite Half-Plane Triangulation (UIHPT) \cite{ac2015uihpt} arises as the $r \to \infty$ local limit of the complement of $\wt{B}_r^{\bullet}(\CT_{\infty})$ in the UIPT, re-rooted at a uniformly random edge on the boundary of the hull.  By the spatial Markov property of the UIPT (conditionally on $B_r^{\bullet}(\CT_\infty)$, its complement is a UIPT of the polygon bounded by $\partial B_r^{\bullet}(\CT_\infty)$), and since the length of $\partial B_r^{\bullet}(\CT_\infty)$ tends to infinity in probability as $r \to \infty$, the entire local limit as $r \to \infty$ of the UIPT near a uniformly random edge of the boundary of $B_r^{\bullet}(\CT_{\infty})$ can be described by gluing \emph{independent} copies of the LHPT and the UIHPT along their common infinite boundary.

The LHPT has an explicit description in terms of a two-sided infinite sequence of i.i.d.\ critical Galton--Watson trees and of independent Boltzmann triangulations, which we now explain. For a non-negative integer $k$, let
\begin{equation}
\label{eq:theta_distribution}
\theta(k)=\frac{3 \binom{2k}{k}}{2 \cdot 4^k (k+1)(k+2)}.
\end{equation}
The function $\theta$ is a probability distribution on the non-negative integers, and it has mean~$1$, so that the associated Galton--Watson trees are critical. Since $\theta$ is also non-degenerate, they are therefore almost surely finite.  One can check from~\eqref{eq:theta_distribution} that $\theta(k) \sim \frac{3}{2\sqrt{\pi}}k^{-5/2}$ as $k \to \infty$.  This is the offspring distribution which appears in the skeleton decomposition of the UIPT and the LHPT; see \cite[Section 2.2]{cl2019fpp}.

Now let $\{\CG_j\}_{j \in \Z}$ be a collection of i.i.d.\ Galton--Watson trees with offspring distribution $\theta$. We embed these trees in the right half-plane as follows. The vertex set of the skeleton will be the points of the form $(i,j+1/2)$, where $i,j \in \Z$ and $i \ge 0$ (these half-integer points are \emph{not} vertices of the triangulation which we construct below. The vertices of the skeleton at generation $i$ are in bijection with the vertices $(i,j)$, $j \in \Z$, of the line $\{i\} \times \Z$ via $(i,j+1/2) \leftrightarrow (i,j)$, and we pass freely between the two). The root of the tree $\CG_j$ is placed at $(0,j+1/2)$. More generally, for each $i \ge 0$, the vertices at generation $i$ of all of the trees are placed bijectively on the points of the line $\{i\} \times (\Z+\frac{1}{2})$ so that, listed from bottom to top, they appear in the order induced by the ordering $\dots,\CG_{-1},\CG_0,\CG_1,\dots$ of the trees and by the planar order within each tree, and so that those of the trees $\CG_j$ with $j \ge 0$ occupy exactly the half-line $\{i\}\times (\Z_{\ge 0} +\frac{1}{2})$ (hence those of the trees $\CG_j$ with $j<0$ occupy exactly the negative half-line $\{i\}\times (\Z_{<0} +\frac{1}{2})$).  This is well defined because almost surely, for each $i \ge 0$, infinitely many of the trees $(\CG_j)_{j \ge 0}$ and infinitely many of the trees $(\CG_j)_{j<0}$ survive to generation $i$, so that each of these two half-lines is entirely filled (see \Cref{fig:LHPT}). These trees will be referred to as the \emph{skeleton} of the LHPT.

Given the skeleton, we now build the triangulation as follows. First, for each vertex $(i,j)$ with $i,j \in \Z$ and $i \ge 0$, we place a triangle whose vertices are $(i,j)$, $(i,j+1)$ and $(i+1,k)$ where $k$ is the minimal integer such that the vertex $(i+1,k+1/2)$ in the skeleton is a child of a vertex $(i,j'+1/2)$ for some $j'>j$ (such a $k$ exists almost surely by the previous paragraph; when $(i,j+1/2)$ has at least one child, $(i+1,k-1/2)$ is its last child).  These are the \emph{rightward triangles} of \Cref{fig:LHPT}. Then each vertex $(i,j)$ will have an empty \emph{slot} to its right, which is a polygon with a certain number of sides $p \ge 2$ (namely, $p$ is two plus the number of children of $(i,j+1/2)$.  In particular, when $(i,j+1/2)$ has no children, the slot is a $2$-gon bounded by two parallel edges from $(i,j)$ to $(i+1,k)$). All of these slots are then filled with independent Boltzmann triangulations of the corresponding polygons, which are also independent of the skeleton (see \cite[Section 2.1]{cl2019fpp} for a definition of Boltzmann triangulations). As in \cite{cl2019fpp}, a $2$-gon slot may be filled with the \emph{edge-triangulation}, which consists of a single edge, in which case the two edges bounding the slot are glued into a single edge.  In this way, the slots at time $i$ are in bijection with the vertices of the skeleton at generation $i$, the slot to the right of $(i,j)$ corresponding to $(i,j+1/2)$, and we refer to a slot by the skeleton vertex which indexes it. The vertex set of the LHPT therefore consists of the points $(i,j)$ with $i \ge 0$ and $j \in \Z$ together with the inner vertices of the Boltzmann triangulations which fill the slots, and its boundary is the bi-infinite path $\{0\} \times \Z$ and its root edge is the edge from $(0,0)$ to $(0,1)$, which has the complement of $\CL$ on its left, as in the limit defining $\CL$.  Throughout the paper we refer to the first coordinate $i$ of a point $(i,j)$ as its \emph{time} and to the second coordinate $j$ as its \emph{height}.  We shall use repeatedly that, almost surely, any path from a vertex of $\{i\} \times \Z$ to a vertex of $\{i'\} \times \Z$ has length at least $|i-i'|$. This follows from the definition of $\CL$ as a limit. Again, see \Cref{fig:LHPT} for an illustration of the LHPT.

\begin{figure}
\includegraphics[scale=0.8]{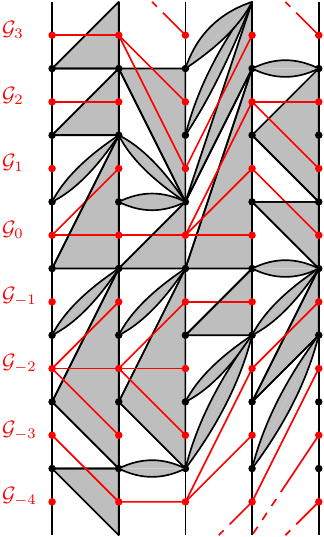}
\caption{\label{fig:LHPT} The LHPT. In red are the trees of the skeleton. In white are the rightward triangles. The remaining gray slots are filled in by independent Boltzmann triangulations.}
\end{figure}

We remark that the usual convention is to embed the LHPT in the \emph{lower} half-plane, as the name would suggest. However, we have chosen to depart from this convention and embed it in the right half-plane so that the trees in the skeleton, viewed as Galton--Watson processes indexed by time, evolve in the standard direction of time.

All distances in the LHPT, as well as the times and heights of its vertices, are necessarily integer-valued, so it will be tacitly understood throughout that every real-valued parameter representing a distance, a height or a time needs to be rounded up or down to an integer (it will never matter which of the two is chosen). We almost always omit floors and ceilings to ease notation.

\subsubsection{Geodesic structure of the LHPT}\label{subsubsec:geodesic_structure} We write $d_{\mathrm{gr}}$ for the graph distance on the vertex set of a planar map, adding a superscript to indicate the map when it is not clear from the context.  In particular, $d_{\mathrm{gr}}^{\CL}$ denotes the graph distance in the LHPT.  All of the distances considered below are graph distances in this sense.  By construction, each vertex $(i,j)$ has at least one edge connecting it to a vertex on the line $\{i+1\} \times \Z$ (namely, the sides from $(i,j)$ of the rightward triangles at $(i,j-1)$ and at $(i,j)$. These two edges may be parallel, in which case they bound a $2$-gon slot and may be glued into a single edge, as explained above). The path obtained by moving right repeatedly while always taking the bottom-most edge is called the \emph{bottom-most geodesic} started from $(i,j)$. It is indeed an infinite geodesic: it hits the line $\{i'\} \times \Z$ after exactly $i'-i$ steps for every $i' \ge i$, while any path from a vertex on $\{i\} \times \Z$ to a vertex on $\{i'\} \times \Z$ has length at least $i'-i$ as observed above.  If $b$ is any point lying on this path, then the restriction of this path from $(i,j)$ to $b$ is a geodesic. We will call this restriction a \emph{geodesic segment} from $(i,j)$ to $b$, and its length is the distance from $(i,j)$ to $b$ (we will usually omit the word ``geodesic'' and simply call this a segment). Note that from the way that we chose to embed the trees $\CG_j$, the bottom-most geodesic started from $(0,0)$ is always the path $((0,0),(1,0),(2,0), \dots)$.

For two points $(i,j)$ and $(i,j+k)$ with $k \ge 1$, their bottom-most geodesics will coalesce after $\ell$ steps (i.e., at time $i+\ell$) if and only if the forest consisting of the $k$ subtrees of the skeleton rooted at $(i,j+1/2), \dots, (i,j+k-1/2)$ (which, by the branching property, are i.i.d.\ Galton--Watson trees with offspring distribution $\theta$) goes extinct at generation~$\ell$. Since the offspring distribution $\theta$ is critical and non-degenerate, almost surely, each tree of the skeleton is finite, and so for any two points, their bottom-most geodesics will eventually coalesce.

\begin{definition}[Bubble]\label{def:bubble}
Let $(i,j)$ be a vertex of $\CL$ and let $k \ge 1$ be an integer.  The \emph{bubble} started at $(i,j)$ with \emph{initial height} $k$ is the sub-triangulation of $\CL$ lying in between the bottom-most geodesics started from $(i,j)$ and from $(i,j+k)$, up to the time at which these two geodesics coalesce.  We call the bottom-most geodesic started from $(i,j)$ the \emph{bottom geodesic} of the bubble, the bottom-most geodesic started from $(i,j+k)$ its \emph{top geodesic}, and $i$ its \emph{starting time}.  For $t \ge 0$, the \emph{height of the bubble at time $t$} is the difference between the heights of its top and its bottom geodesic at time $i+t$. The \emph{lifetime} of the bubble is the first time at which its height vanishes, that is, the time taken by the two geodesics to coalesce.
\end{definition}

The bubble started at $(i,j)$ with initial height $k$ is encoded by the forest of the $k$ subtrees of the skeleton rooted at $(i,j+1/2), \dots, (i,j+k-1/2)$ considered above, together with the Boltzmann triangulations filling the slots indexed by the vertices of this forest.  Indeed, the slots lying between the two geodesics at time $i+t$ are exactly the individuals at generation $t$ of that forest, so that the height of the bubble at time $t$ is the population at generation $t$ of the forest.  As a process in $t$, the height of a bubble with initial height $k$ is therefore a Galton--Watson process with offspring distribution $\theta$ started from $k$, and the lifetime of the bubble is the extinction time of that process.  We pass freely between these two descriptions of a bubble, as a region of the LHPT and as a forest of $k$ i.i.d.\ Galton--Watson trees.

\subsubsection{Stochastic thin Laakso substructures constructed from LHPT geodesics} We are now ready to give an overview of how the proof will go. We will construct thin Laakso-like structures (in the spirit of the thin Laakso substructures of \cite{bgn2015doubling,bss2021thinlaakso}) in which the role of the diamonds in the Laakso construction is played by what we call \emph{good} bubbles. A bubble of initial height $k$ typically has lifetime of order $\sqrt{k}$ (i.e., the bottom-most geodesics started from $(i,j)$ and $(i,j+k)$ will usually coalesce after moving about $\sqrt{k}$ steps to the right). Occasionally, however, we get an exceptionally long-lived bubble which behaves in many ways like the ``diamond'' that gets inserted in the middle of an edge when iterating the Laakso graph construction. Since the top geodesic of a bubble is itself a geodesic segment, and the behavior of the LHPT on top of it (and before and after it) is independent of what happens inside the bubble (this is a form of the spatial Markov property of the LHPT, which is made precise in \Cref{step:distrect_3} of the proof of \Cref{prop:distortion_rectangle_lhpt}), we can repeat the search for good bubbles at a smaller scale around and on top of a good bubble. We can then iterate this process, in the same way that Laakso graphs are built recursively. This procedure of searching for good bubbles is what we call the \emph{exploration process}, and much of this paper is devoted to showing that the structures which it produces resemble Laakso graphs closely enough to yield the desired lower bound on the $\ell^2$ distortion. See \Cref{fig:exploration process}.

The metric structure of the triangulation inside a bubble is mostly determined by the part of the skeleton that it contains, that is, by the subtrees of the skeleton whose roots index the slots along the vertical segment where the bubble starts.  Recall that these are Galton--Watson trees with offspring distribution $\theta$ given in~\eqref{eq:theta_distribution}. The distribution $\theta$ has several nice properties, the most important of which for us is that the iterates of its generating function can be computed explicitly. This allows us to find an exact, simple formula for the distribution of the extinction time of a Galton--Watson process with this offspring distribution, and this in turn will let us obtain some very precise information on the behavior of such processes.

The origin of the exponent $1/4$ in \Cref{thm:main} is somewhat involved. We will provide an intuitive explanation in \Cref{sec:distortion LHPT}, right after the proof of \Cref{prop:distortion_rectangle_lhpt}. Essentially, it is related to the fact that the boundary of the hull of radius $r$ in the UIPT typically has about $r^2$ vertices (the same is true in $\CT_n$ provided $r$ is small compared to $n^{1/4}$).

\subsubsection{Organization} The rest of the paper is organized as follows. In \Cref{sec:lhpt_skeleton}, we collect various estimates on Galton--Watson processes with offspring distribution $\theta$, which encode many of the metric properties of the LHPT.  In \Cref{sec:exploration_process}, we describe the exploration process in more detail, we give a precise definition of ``good'' bubbles, and we estimate the probability of a given bubble being good, as well as the probability of observing at least one good bubble.  We also prove the necessary estimates to be able to iterate the exploration around and below a good bubble.  In \Cref{sec:distortion LHPT}, we show how the exploration process can be used to derive an $\ell^2$ distortion lower bound for the LHPT. Finally, in \Cref{sec:LHPT to sphere}, we show how to transfer this distortion bound to a lower bound for finite triangulations of the sphere to complete the proof of \Cref{thm:main}.

\subsection*{Acknowledgements}

J.M.\ was supported by the ERC Consolidator Grant ARPF (Horizon Europe UKRI G120614).
J.R.\ was supported by an NSERC PDF scholarship.

\section{Estimates for the skeleton of the LHPT}
\label{sec:lhpt_skeleton}

As already discussed above, the metric properties inside a bubble are largely dictated by the behavior of the skeleton. This is particularly convenient, as the skeleton has a simple description in terms of Galton--Watson trees with a specific critical offspring distribution $\theta$, which we recall is given in~\eqref{eq:theta_distribution}. The distribution $\theta$ has many nice properties; for instance, it is in the domain of attraction of a $3/2$-stable law (recall the tail asymptotics recorded just after~\eqref{eq:theta_distribution}). For our purposes, however, the most important property will be that the iterates of its generating function can be computed explicitly.

\begin{lemma}[Formula for the generating function]
\label{lem:generating_function}
For $x \in [0,1)$, let $f(x)=\sum_{k=0}^{\infty} \theta(k)x^k$ be the generating function of $\theta$, let $f_1(x)=f(x)$, and for $n>1$, let $f_n(x)=f_{n-1}(f(x))$ be the $n$-th iterate of $f$ with itself. Then for all $n \ge 1$,
\begin{equation}\label{eq:nth_iterate}
f_n(x)=1-\left(n+\frac{1}{\sqrt{1-x}}\right)^{-2}.
\end{equation}
\end{lemma}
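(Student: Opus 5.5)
The plan has three stages: compute $f$ in closed form, prove the case $n=1$, and then induct on $n$ using a conjugacy that turns iteration of $f$ into translation.

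\textbf{Closed form for $f$.} First I will compute $f$ explicitly from~\eqref{eq:theta_distribution}. The natural route is to use
\[
\frac{1}{(k+1)(k+2)}=\frac{1}{k+1}-\frac{1}{k+2}
\]
together with the classical expansion $\sum_{k\ge0}\binom{2k}{k}4^{-k}x^k=(1-x)^{-1/2}$. Integrating this series once and twice gives closed forms for $\sum_k \binom{2k}{k}4^{-k}x^{k+1}/(k+1)$ and $\sum_k \binom{2k}{k}4^{-k}x^{k+2}/((k+1)(k+2))$. From these one should get
\[
f(x)=\tfrac32\cdot\frac{1}{x^2}\Bigl(\tfrac43\bigl(1-(1-x)^{3/2}\bigr)-2x+\dots\Bigr).
\]
It is cleaner, however, to verify the target identity directly. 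Set $u=(1-x)^{-1/2}\ge1$. The claim for $n=1$ is
\[
f(x)=1-\frac{1}{(1+u)^2}.
\]
Writing $s=\sqrt{1-x}$, this becomes $1-s^2/(1+s)^2$. So I would expand $g(x):=1-(1-x)/(1+\sqrt{1-x})^2$ as a power series and check that its coefficients equal $\theta(k)$.

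To do the expansion, rationalise using $(1+s)^{-1}=(1-s)/(1-s^2)=(1-s)/x$. This gives
\[
\frac{s^2}{(1+s)^2}=\frac{s^2(1-s)^2}{x^2},
\]
so
\[
g(x)=\frac{x^2-(1-x)(1-2s+s^2)}{x^2}=\frac{x^2-(1-x)(2-x)+2(1-x)^{3/2}}{x^2}.
\]
The coefficients of $(1-x)^{3/2}=\sum_m\binom{3/2}{m}(-x)^m$ are explicit, and the polynomial part cancels the terms of order $m\le 1$. One then matches the coefficient of $x^{k+2}$, namely $2\binom{3/2}{k+2}(-1)^{k+2}$, against $\theta(k)$ using $\binom{3/2}{k+2}(-1)^k=\frac{3}{4}\cdot\frac{\binom{2k}{k}}{4^k(k+1)(k+2)}$. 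That last identity is a routine Gamma-function/double-factorial manipulation. This completes the case $n=1$.

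\textbf{Induction via conjugacy.} Define $\phi(x)=(1-x)^{-1/2}$, a bijection $[0,1)\to[1,\infty)$ with inverse $\phi^{-1}(y)=1-y^{-2}$. The $n=1$ case says exactly that
\[
f=\phi^{-1}\circ\tau\circ\phi, \qquad \tau(y)=y+1.
\]
Consequently $f_n=\phi^{-1}\circ\tau^n\circ\phi$, which gives $f_n(x)=1-(n+\phi(x))^{-2}$, i.e.~\eqref{eq:nth_iterate}. Formally this is an induction: $f_n=f_{n-1}\circ f$ and $\phi(f(x))=1+\phi(x)$. I also need $f$ to map $[0,1)$ into $[0,1)$ so that the composition is defined, and this is immediate from the formula.

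\textbf{Main obstacle.} The only real work is the coefficient identification in the $n=1$ step. I expect it to be a short but error-prone binomial computation; the algebraic rationalisation above reduces it to a single identity between $\binom{3/2}{k+2}$ and $\binom{2k}{k}$. As a sanity check, I will confirm that $k=0$ gives $\theta(0)=3/4$. Indeed, $g(0)=\bigl(0-2+2\bigr)/x^2$ has leading term $2\binom{3/2}{2}=2\cdot\frac38=\frac34$, matching $\theta(0)=3/(2\cdot2)$.
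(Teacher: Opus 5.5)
Your proposal is correct and follows the same plan as the paper. For $n=1$, both rationalise $1-(1-x)/(1+\sqrt{1-x})^2$ into $\bigl(3x-2+2(1-x)^{3/2}\bigr)/x^2$, which is exactly the paper's Step~1, and for general $n$ your conjugacy $\phi(f(x))=1+\phi(x)$ is the paper's Step~3 induction.

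The only difference is how you match coefficients with $\theta$. You expand $(1-x)^{3/2}$ by the binomial series and use the identity $(-1)^k\binom{3/2}{k+2}=\frac34\binom{2k}{k}/(4^k(k+1)(k+2))$, which is correct. The paper instead telescopes $\theta(k)=\binom{2k}{k}/(4^k(k+1))-\binom{2k+2}{k+1}/(4^{k+1}(k+2))$ and uses the Catalan generating function. Incidentally, the double-integration sketch you abandoned has its sign reversed: the twice-integrated series is $2x-\tfrac43\bigl(1-(1-x)^{3/2}\bigr)$.
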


\begin{proof}
\noindent\steplabel{1}{step:genfn_1}\emph{Step 1. Expanding the right-hand side of~\eqref{eq:nth_iterate} for $n=1$.} Throughout the proof we take $x \in (0,1)$, so that we may divide by $x$; the case $x=0$ of~\eqref{eq:nth_iterate} then follows by continuity (or directly, since $f_n(0)=1-(n+1)^{-2}$ and $\theta(0)=3/4$). We first expand:
\begin{equation}\label{eq:expand_generating_function}
\begin{aligned}
1-\left(1+\frac{1}{\sqrt{1-x}}\right)^{-2}&=1-\frac{1-x}{(1+\sqrt{1-x})^2}=\frac{(1+\sqrt{1-x})^2-(1-x)}{(1+\sqrt{1-x})^2}\\
&=\frac{x^2-(1-x)(1-\sqrt{1-x})^2}{x^2}=\frac{3x-2+2\sqrt{1-x}-2x\sqrt{1-x}}{x^2}.
\end{aligned}
\end{equation}

\noindent\steplabel{2}{step:genfn_2}\emph{Step 2. Computing $f$.} On the other hand, it is easy to check that
\[\theta(k)=\frac{\binom{2k}{k}}{4^k (k+1)}-\frac{\binom{2(k+1)}{k+1}}{4^{k+1} (k+2)};\]
indeed, $\binom{2k+2}{k+1}=\frac{2(2k+1)}{k+1}\binom{2k}{k}$, so that the right-hand side equals
\[\frac{\binom{2k}{k}}{2 \cdot 4^k (k+1)(k+2)}\big(2(k+2)-(2k+1)\big)=\frac{3\binom{2k}{k}}{2 \cdot 4^k(k+1)(k+2)},\]
which is $\theta(k)$ as defined in~\eqref{eq:theta_distribution}.
Since
\[c(x)=\frac{1-\sqrt{1-4x}}{2x}\]
is the generating function of the Catalan numbers, we have
\begin{align*}
f(x)&=\sum_{k=0}^{\infty} \theta(k)x^k=c\left(\frac{x}{4}\right)-\frac{1}{x}\left(c\left(\frac{x}{4}\right)-1\right)=\frac{2-2\sqrt{1-x}}{x}-\frac{2-2\sqrt{1-x}}{x^2}+\frac{1}{x}\\
&=\frac{3x-2+2\sqrt{1-x}-2x\sqrt{1-x}}{x^2}.
\end{align*}
This is exactly~\eqref{eq:expand_generating_function}. This proves~\eqref{eq:nth_iterate} for $n=1$.

\noindent\steplabel{3}{step:genfn_3}\emph{Step 3. The induction.} The case $n>1$ follows by induction.  Indeed, the case $n=1$ can be rewritten as $(1-f(x))^{-1/2}=1+(1-x)^{-1/2}$, and hence, assuming~\eqref{eq:nth_iterate} for $n-1$, we get
\[f_n(x)=f_{n-1}(f(x))=1-\big(n-1+(1-f(x))^{-1/2}\big)^{-2}=1-\big(n+(1-x)^{-1/2}\big)^{-2},\]
as desired.  (The inductive hypothesis is applicable at the point $f(x)$ because $f(x) \in [\theta(0),1)=[3/4,1)$ for $x \in [0,1)$, $f$ being the generating function of a non-degenerate probability distribution on $\Z_{\ge0}$.)
\end{proof}

Let $X=(X_n)_{n \ge 0}$ be a Galton--Watson process with offspring distribution $\theta$ and initial population $X_0=1$. Then the generating function of $X_n$ is $f_n$. In particular,
\[\p_1(X_n>0)=1-f_n(0)=\frac{1}{(n+1)^2},\]
where the subscript $1$ in $\p_1$ is to emphasize that the initial population size is $1$.  More generally, for a positive integer $x$ we use $\p_x$ to denote the law of $X$ when the initial population size is $x$, and $\E_x$ for the corresponding expectation (as elsewhere in the paper, when $x$ is given by an expression which is not an integer, it is to be rounded to one). Then we have
\begin{equation}
\label{eq:bubble_lifetime}
\p_x(X_n>0)=1-(f_n(0))^x=1-\left(1-\frac{1}{(n+1)^2}\right)^x.
\end{equation}

We will frequently use the following inequalities:
\begin{equation}\label{eq:bernoulli}
1-my \le (1-y)^m \le 1-my+\frac{m^2 y^2}{2}.
\end{equation}
These bounds hold for any $0 \le y \le 1$ and any non-negative integer $m$. These bounds follow, for instance, from Taylor's theorem with remainder, or by induction on $m$.

We will also use the Chernoff--Hoeffding inequality:
\begin{equation}\label{eq:chernoff_hoeffding}
\p(Z \ge n p) \le e^{-\CD(p \| q)n}
\end{equation}
for $Z$ a $\Bin(n,q)$ random variable and $0<q<p<1$. Here $\CD$ denotes the Kullback--Leibler divergence which is given by
\begin{equation}\label{eq:kl_divergence}
\CD(p \| q)=p\log\left(\frac{p}{q}\right)+(1-p)\log\left(\frac{1-p}{1-q}\right).
\end{equation}
We shall repeatedly bound it from below using
\begin{equation}\label{eq:kl_lower_bound}
\CD(p \| q) \ge \frac{(p-q)^2}{2p} \qquad\text{for } p \ge q.
\end{equation}
(To see this, note that both sides vanish when $q=p$, and that for $0<q\le p$ we have
\[\partial_q \CD(p\|q)=\frac{q-p}{q(1-q)} \le \frac{q-p}{p}=\partial_q\frac{(p-q)^2}{2p},\]
since $q(1-q) \le q \le p$ and $q-p\le0$; integrating from $q$ to $p$ gives the claim.)

As a result of \eqref{eq:bubble_lifetime}, we can obtain estimates for the maximum and minimum of $X$.

\begin{lemma}[Bound for the maximum]\label{lem:tail_bound_maximum}
For all integers $x \ge y \ge 1$, we have
\begin{equation}\label{eq:upper_tail_bound_maximum}
\p_y\!\left(\sup_{j \ge 0} X_j \ge x\right) \le \frac{y}{x}.
\end{equation}
\end{lemma}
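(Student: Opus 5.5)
The plan is to use that $X$ is a non-negative martingale under $\p_y$, because $\theta$ has mean~$1$, and then apply optional stopping at the first time it reaches level $x$. Fatou's lemma takes care of the fact that this time may be infinite.

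First, the offspring distribution $\theta$ is critical, so $\E[X_{n+1}\mid X_0,\dots,X_n]=X_n$ and $(X_n)_{n\ge0}$ is a non-negative martingale under $\p_y$ with $\E_y X_n=y$.

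Next, let $\tau=\inf\{j\ge0: X_j\ge x\}$, with $\inf\emptyset=\infty$. This is a stopping time, and the event $\{\sup_j X_j\ge x\}$ is exactly $\{\tau<\infty\}$. For each fixed $N$, the stopped process $X_{\tau\wedge N}$ is bounded by the optional stopping theorem for bounded stopping times, and it satisfies $\E_y X_{\tau\wedge N}=y$. On the event $\{\tau\le N\}$ we have $X_{\tau\wedge N}=X_\tau\ge x$. Since $X\ge 0$, this gives
\[
y=\E_y X_{\tau\wedge N}\ge x\,\p_y(\tau\le N).
\]
Letting $N\to\infty$ and using monotone convergence of the events $\{\tau\le N\}\uparrow\{\tau<\infty\}$ yields $\p_y(\tau<\infty)\le y/x$, which is the bound~\eqref{eq:upper_tail_bound_maximum}.

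Equivalently, this is Doob's maximal inequality for the non-negative martingale $X$ applied on $\{0,\dots,N\}$, followed by $N\to\infty$. There is no real obstacle here. The only point requiring care is that $\tau$ may be infinite, which is handled by truncating at $N$ as above. The hypothesis $x\ge y$ only ensures that the bound is nontrivial, i.e.\ at most~$1$.
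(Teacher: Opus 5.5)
Your argument is correct and is essentially the paper's proof: the paper applies Doob's maximal inequality to the non-negative martingale $X$ on $\{0,\dots,n\}$ and then lets $n\to\infty$ by continuity of probability, which is your optional stopping at $\tau\wedge N$ written out. One wording fix: it is the stopping time $\tau\wedge N$ that is bounded, which is what justifies $\E_y X_{\tau\wedge N}=y$; the stopped variable $X_{\tau\wedge N}$ itself need not be bounded, since $\theta$ has unbounded support. Also, the Fatou's lemma you announce at the start is never actually used, because monotone convergence of the events already does the job.
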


\begin{proof}
Since $\theta$ has mean 1, the process $X$ is a non-negative martingale. Thus by Doob's inequality,
\[\p_y\!\left(\max_{0 \le j \le n} X_j \ge x\right) \le \frac{\E_y(X_n)}{x}=\frac{y}{x}\]
for all $n\ge 0$. Applying the continuity of probability to the increasing sequence of events $\{\max_{0 \le j \le n} X_j \ge x\}$ as $n \to \infty$ gives~\eqref{eq:upper_tail_bound_maximum}.
\end{proof}

\begin{lemma}[Bound for the minimum]\label{lem:tail_bound_minimum}
Let $x$ and $n$ be positive integers, and let $a \in (0,x)$. Then
\[\p_x\!\left(\min_{0 \le j \le n}X_j \ge a \right) \ge \left(1-\frac{1}{(n+1)^2}\right)^a-\left(1-\frac{1}{(2n+1)^2}\right)^x.\]
\end{lemma}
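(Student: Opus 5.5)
The plan is to bound the probability of the complementary event $\{\min_{0 \le j \le n} X_j < a\}$ by comparing it with the extinction probability at time $2n$, which \eqref{eq:bubble_lifetime} gives explicitly. The idea is that once the population drops below $a$ before time $n$, it has at least $n$ more generations to die out before time $2n$, and from fewer than $a$ individuals this happens with probability at least $(1-1/(n+1)^2)^a$.

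\emph{Step 1: stopping time.} Let $\tau=\inf\{j \ge 0: X_j<a\}$, so that $\{\min_{0\le j\le n}X_j<a\}=\{\tau\le n\}$. On $\{\tau \le n\}$ we have $X_\tau \le a-1$ and $2n-\tau \ge n$. By the strong Markov property of the Galton--Watson process at $\tau$, and using that $f_m(0)$ is non-decreasing in $m$ and that $f_n(0)\le 1$,
\[
\p_x(X_{2n}=0 \giv \CF_\tau) = f_{2n-\tau}(0)^{X_\tau} \ge f_n(0)^{a} = \left(1-\frac{1}{(n+1)^2}\right)^a=:A
\]
on $\{\tau\le n\}$. (If $X_\tau=0$ the conditional probability is $1$, which is consistent with this bound.)

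\emph{Step 2: compare with exact extinction.} Taking expectations gives
\[
\p_x(\tau\le n)\, A \le \p_x(X_{2n}=0)=\left(1-\frac{1}{(2n+1)^2}\right)^x=:B,
\]
by \eqref{eq:bubble_lifetime}. Since $A>0$, this yields $\p_x(\min_{0\le j\le n}X_j\ge a)\ge 1-B/A$.

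\emph{Step 3: algebra.} If $A\le B$, the claimed lower bound $A-B$ is non-positive, so the statement is trivial. If $A>B$, we have
\[
\left(1-\frac{B}{A}\right)-(A-B)=\frac{(1-A)(A-B)}{A}\ge 0,
\]
which gives the stated inequality.

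None of the steps is a real obstacle. The only point needing care is the monotonicity $f_{m}(0)\ge f_n(0)$ for $m\ge n$, which holds because $f_m(0)=1-(m+1)^{-2}$ by \Cref{lem:generating_function}, together with the fact that $X_\tau<a$ lets us raise $f_n(0)\le 1$ to the power $a$ instead of $X_\tau$.
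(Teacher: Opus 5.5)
Your proof is correct and is essentially the paper's argument: the same stopping time $\tau$, the strong Markov property at $\tau$, and a comparison with the explicit survival/extinction probability at time $2n$. The only difference is that you keep the factor $\p_x(\tau\le n)$, which gives the slightly sharper bound $1-B/A$, whereas the paper bounds that factor by $1$ and obtains $A-B$ directly. One small slip: $X_\tau\le a-1$ holds only for integer $a$, and the paper applies the lemma with non-integer $a$ (e.g.\ $a=\zeta m^2$), but your argument only uses $X_\tau<a$, so nothing breaks.
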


\begin{proof}
\noindent\steplabel{1}{step:minimum_1}\emph{Step 1. A stopping time.} Let
\[\tau=\min \{k \ge 0: X_k<a\}.\]
Then $\tau$ is a stopping time, and $\{\tau>n\}=\{\min_{0 \le j \le n} X_j \ge a\}$, so that it suffices to bound $\p_x(\tau>n)$ from below.  Note also that $\tau \ge 1$, since $X_0=x>a$.

\noindent\steplabel{2}{step:minimum_2}\emph{Step 2. Decomposition via the strong Markov property.} Since $0$ is an absorbing state for the branching process $X$, on the event $\{\tau \le n\}$ we have that the event $\{X_{2n}>0\}$ is contained in $\{X_{\tau+n}>0\}$, because $\tau+n \le 2n$ there. By the strong Markov property at the bounded stopping time $\min(\tau,n)$, we have
\begin{align*}
\p_x(X_{2n}>0)&\le \p_x(X_{2n}>0, \tau \le n)+\p_x(\tau>n)\\
&\le \p_x(X_{\tau+n}>0, \tau \le n )+\p_x(\tau>n)\\
&= \E_x\!\left(\p_{X_{\tau}}(X_n>0)\one_{\{\tau \le n\}}\right)+\p_x(\tau>n).
\end{align*}
Since $X_{\tau} < a$ on the event $\{\tau \le n\}$, and $t \mapsto 1-(f_n(0))^{t}$ is increasing in $t$, we have $\p_{X_{\tau}}(X_n>0) \le \p_a(X_n>0) := 1-(f_n(0))^a$ almost surely on $\{\tau \le n\}$. Therefore,
\begin{align*}
\p_x(X_{2n}>0) &\le \p_a(X_n>0)\p_x(\tau \le n)+\p_x(\tau>n)\\
&\le \p_a(X_n>0)+\p_x(\tau>n).
\end{align*}

\noindent\steplabel{3}{step:minimum_3}\emph{Step 3. Conclusion.} Rearranging, we obtain
\begin{align*}
\p_x(\tau>n) &\ge \p_x(X_{2n}>0)-\p_a(X_n>0)\\
&= \left(1-\frac{1}{(n+1)^2}\right)^a- \left(1-\frac{1}{(2n+1)^2}\right)^x,
\end{align*}
where we used~\eqref{eq:bubble_lifetime} for both terms; this is the assertion of the lemma, by \Cref{step:minimum_1}.
\end{proof}

We can also get precise asymptotics for the expected extinction time.

\begin{lemma}[Expected extinction time]\label{lem:expected_length_of_bubble}
Let $T=\min \{k \ge 1: X_k=0\}$ be the extinction time of~$X$. Then
\begin{equation}\label{eq:expected_length_of_bubble}
\E_n(T)=\sqrt{\pi n}+O(1) \quad\text{as}\quad n \to \infty.
\end{equation}
Also
\[\frac{3 \sqrt{n}}{4} \le \E_n(T) \le 2\sqrt{n} \quad\text{for all}\quad n \ge 1.\]
\end{lemma}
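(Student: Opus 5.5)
We start from the tail-sum formula for the extinction time together with \eqref{eq:bubble_lifetime}. Since $T>k$ if and only if $X_k>0$,
\[\E_n(T)=\sum_{k\ge 0}\p_n(X_k>0)=\sum_{k\ge0}\left(1-\left(1-\frac{1}{(k+1)^2}\right)^n\right)=\sum_{m\ge1} g(m),\]
where $g(m)=1-(1-m^{-2})^n$. The function $g$ is decreasing in $m$ and takes values in $[0,1]$. Comparing the sum with an integral therefore gives
\[\int_1^\infty g(u)\,du\le \sum_{m\ge1}g(m)\le 1+\int_1^\infty g(u)\,du.\]
So it suffices to evaluate $I_n=\int_1^\infty g(u)\,du$ up to an additive $O(1)$ error, and to bound it explicitly.

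\emph{Asymptotics.} Write $(1-u^{-2})^n=e^{n\log(1-u^{-2})}$ and split the integral at $u=n^{1/4}$ (any scale between $1$ and $\sqrt n$ would do). On $[1,n^{1/4}]$ the integrand lies in $[0,1]$, so this piece contributes at most $n^{1/4}$. That is too crude, so I would instead substitute $u=\sqrt n\,s$ directly:
\[I_n=\sqrt n\int_{1/\sqrt n}^\infty \bigl(1-(1-1/(ns^2))^n\bigr)\,ds.\]
Comparing with $\sqrt n\int_0^\infty(1-e^{-1/s^2})\,ds=\sqrt n\cdot\sqrt\pi$ (substitute $v=1/s$ and integrate by parts to get $\Gamma(1/2)$), the main obstacle is to show that the error is $O(1)$ rather than merely $o(\sqrt n)$.

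Two inputs handle this. First, the region $s<1/\sqrt n$ contributes at most $\sqrt n\cdot n^{-1/2}=1$. Second, for $y=1/(ns^2)\le1$ we need a uniform bound on the difference $e^{-ny}-(1-y)^n$:
\begin{itemize}
\item It is nonnegative, since $1-y\le e^{-y}$.
\item It is at most $e^{-ny}\bigl(1-e^{-ny^2/(1-y)}\bigr)\lesssim ny^2e^{-ny}$ when $y\le 1/2$, using $\log(1-y)\ge -y-y^2/(1-y)$.
\item For $y\in[1/2,1]$ both terms are exponentially small in $n$.
\end{itemize}
With $y=1/(ns^2)$ we have $ny^2e^{-ny}=n^{-1}s^{-4}e^{-1/s^2}$. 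After multiplying by $\sqrt n$ and integrating over $s>0$, this gives $O(n^{-1/2})$. Hence $I_n=\sqrt{\pi n}+O(1)$, and therefore $\E_n(T)=\sqrt{\pi n}+O(1)$.

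\emph{Explicit bounds.} For all $n\ge1$ we avoid asymptotics and use \eqref{eq:bernoulli} directly on the sum. For the upper bound, $g(m)\le\min(1,n/m^2)$. Summing $1$ over $m\le\sqrt n$ and $n/m^2$ over $m>\sqrt n$ gives roughly $\sqrt n+\sqrt n$; one has to handle the rounding at $m\approx\sqrt n$ and check that the total stays below $2\sqrt n$. If that is tight, a cleaner route is to use $g(1)=1$ separately and bound $\sum_{m\ge2}$ by $\int_1^\infty\min(1,n/u^2)\,du=2\sqrt n-1$, which gives $\E_n(T)\le 2\sqrt n$ exactly.

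For the lower bound, $(1-m^{-2})^n\le e^{-n/m^2}$, so $g(m)\ge 1-e^{-n/m^2}\ge (1-e^{-1})\min(1,n/m^2)$. Summing gives at least $(1-e^{-1})\cdot 2\sqrt n\cdot(1-o(1))\approx1.26\sqrt n$, comfortably above $\tfrac34\sqrt n$. Small values of $n$, where the rounding losses matter, can be checked directly: for example, $n=1$ gives $\E_1(T)=\sum_m m^{-2}=\pi^2/6\ge 3/4$.
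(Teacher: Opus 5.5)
Your argument is correct, and your setup coincides with the paper's. The paper writes $T$ under $\p_n$ as a maximum of $n$ i.i.d.\ single-tree extinction times and replaces them by continuous variables with $\p(Y_i>x)=(x+1)^{-2}$. This gives $\E(M_n')\le\E_n(T)\le\E(M_n')+1$ with $\E(M_n')=\int_0^\infty\bigl(1-(1-(x+1)^{-2})^n\bigr)\,dx$, which is exactly your $I_n$ after substituting $u=x+1$. So your tail-sum and integral-comparison sandwich is the same reduction.

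The genuine difference is in how $I_n$ is evaluated. The paper computes it in closed form. Peeling off one factor gives a recursion in $n$ whose increments are the Beta integrals $\frac{\sqrt\pi}{2}\Gamma(k)/\Gamma(k+\frac12)$, and these telescope to $I_n=4^n/\binom{2n}{n}-1$. Both assertions then follow from Stirling's formula and the standard bounds $\frac{1}{2\sqrt n}\le 4^{-n}\binom{2n}{n}\le\frac{1}{\sqrt{\pi n}}$.

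You instead compare $(1-y)^n$ with $e^{-ny}$. The difference is $O(ny^2e^{-ny})$, which costs $O(n^{-1/2})$ after rescaling, plus at most $1$ from the region $s<n^{-1/2}$. You get the explicit constants from the pointwise bounds $(1-e^{-1})\min(1,n/m^2)\le g(m)\le\min(1,n/m^2)$.

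What each buys: the paper's route gives an exact formula, hence for instance $I_n=\sqrt{\pi n}-1+O(n^{-1/2})$ and constants with no case checking, but it rests on a fortunate special-function identity. Yours uses only elementary inequalities. Your upper bound $g(1)+\int_1^\infty\min(1,n/u^2)\,du=2\sqrt n$ already gives exactly the stated constant.

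Two tidy-ups.
\begin{itemize}
\item In the lower bound, replace ``$(1-o(1))$'' and ``small values can be checked directly'' by the explicit estimate $\sum_{m\ge1}\min(1,n/m^2)\ge\int_1^\infty\min(1,n/u^2)\,du=2\sqrt n-1$. This gives $\E_n(T)\ge(1-e^{-1})(2\sqrt n-1)\ge\frac34\sqrt n$ for every $n\ge2$, so $n=1$ (where $\E_1(T)=\pi^2/6$) is the only case needing a separate check.
\item Delete the abandoned remark about splitting at $n^{1/4}$.
\end{itemize}
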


\begin{proof}
\noindent\steplabel{1}{step:explen_1}\emph{Step 1. Reduction to a maximum of continuous random variables.} Under $\p_1$, the law of $T$ is given by
\[\p_1(T>k)=\p_1(X_{k}>0)=\frac{1}{(k+1)^2} \quad\text{for all integers } k \ge 0,\]
and so, since the $n$ initial individuals generate independent trees, under $\p_n$ the extinction time $T$ has the same law as $M_n=\max(T^{(1)},\dots, T^{(n)})$ where the $T^{(i)}$ are i.i.d.\ copies of $T$ under $\p_1$. Let $Y_1, Y_2, \dots$ be i.i.d.\ with
\[\p(Y_1>x)=\frac{1}{(x+1)^2} \quad\text{for}\quad  x \ge 0,\]
that is, they have density $2/(x+1)^3$ for $x \ge 0$. Then the $\lceil Y_i \rceil$ have the same distribution as the $T^{(i)}$. Then setting $T^{(i)}=\lceil Y_i \rceil$ and $M_n'=\max(Y_1, \dots, Y_n)$, we clearly have
\[Y_i \le T^{(i)} \le Y_i+1\]
and
\[M_n' \le M_n \le M_n'+1.\]
Thus it is enough to prove~\eqref{eq:expected_length_of_bubble} for $M_n'$ instead of $M_n$.

\noindent\steplabel{2}{step:explen_2}\emph{Step 2. A recursion for $\E(M_n')$.} Now,
\begin{align*}
\E(M_n')&=\int_0^{\infty} \p(M_n'>x)dx=\int_0^{\infty} 1-\left(1-\frac{1}{(x+1)^2}\right)^n dx\\
&=\int_0^{\infty} 1-\left(1-\frac{1}{(x+1)^2}\right)^{n-1} dx+\int_0^{\infty} \frac{1}{(x+1)^2} \left(1-\frac{1}{(x+1)^2}\right)^{n-1} dx\\
&=\E(M_{n-1}')+\int_0^{\infty} \frac{1}{(x+1)^2} \left(1-\frac{1}{(x+1)^2}\right)^{n-1} dx.
\end{align*}
For this last integral, we perform the substitution $u=1-1/(x+1)^2$. The resulting integrand is proportional to the density of the Beta$(n,1/2)$ distribution, and so we find
\[\int_0^{\infty} \frac{1}{(x+1)^2} \left(1-\frac{1}{(x+1)^2}\right)^{n-1} dx=\frac{1}{2}\int_0^1 \frac{u^{n-1}}{\sqrt{1-u}} du=\frac{\sqrt{\pi}}{2}\frac{\Gamma(n)}{\Gamma(n+1/2)}.\]

\noindent\steplabel{3}{step:explen_3}\emph{Step 3. Evaluation of the sum.} Therefore, by telescoping (with the convention $\E(M_0')=0$),
\[\E(M_n')=\sum_{k=1}^n \frac{\sqrt{\pi}}{2}\frac{\Gamma(k)}{\Gamma(k+1/2)}=\frac{\sqrt{\pi}\,\Gamma(n+1)}{\Gamma(n+1/2)}-1=\frac{4^n}{\binom{2n}{n}}-1.\]
The second equality can be checked easily by induction: it holds for $n=1$, both sides being equal to $1$, and if it holds for $n$ then, using $\Gamma(n+3/2)=(n+\frac12)\Gamma(n+\frac12)$,
\[\frac{\sqrt{\pi}\,\Gamma(n+1)}{\Gamma(n+1/2)}+\frac{\sqrt{\pi}}{2}\frac{\Gamma(n+1)}{\Gamma(n+3/2)}=\sqrt{\pi}\,\Gamma(n+1)\,\frac{(n+\frac12)+\frac12}{\Gamma(n+3/2)}=\frac{\sqrt{\pi}\,\Gamma(n+2)}{\Gamma(n+3/2)},\]
which is the corresponding expression for $n+1$.  The third equality follows from the identity $\Gamma(n+1/2)=\sqrt{\pi}\,(2n)!/(4^n n!)$.

\noindent\steplabel{4}{step:explen_4}\emph{Step 4. Asymptotics and explicit bounds.} Stirling's formula yields~\eqref{eq:expected_length_of_bubble} immediately, and for the bounds, we can use the well-known inequalities
\[\frac{1}{2\sqrt{n}} \le \frac{1}{4^n} \binom{2n}{n} \le \frac{1}{\sqrt{\pi n}}\]
to conclude that
\[\E_n(T) \ge \E(M_n') \ge \sqrt{\pi n}-1 \ge (\sqrt{\pi}-1)\sqrt{n} \ge \frac{3\sqrt{n}}{4}\]
and
\[\E_n(T) \le \E(M_n')+1 \le 2\sqrt{n}.\qedhere\]
\end{proof}

We next prove two lemmas which control the probability that $X$ drops too quickly from its initial height.

\begin{lemma}
\label{lem:dropping_below_initial}
There exists a constant $c_{\ref{lem:dropping_below_initial}} > 0$ so that for all $\zeta \in (0,c_{\ref{lem:dropping_below_initial}})$ we have that
\[ \p_{m^2}\!\left(\min_{0 \le j \le 3m} X_j \ge \zeta m^2 \right) \ge c_{\ref{lem:dropping_below_initial}} - \zeta \quad\text{for all}\quad m \in \N.\]
\end{lemma}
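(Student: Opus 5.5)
The plan is to apply \Cref{lem:tail_bound_minimum} with $x=m^2$, $n=3m$ and $a$ equal to the threshold $\zeta m^2$, and then bound the two resulting terms using~\eqref{eq:bernoulli} and $1-y\le e^{-y}$. Small values of $m$, where integer rounding interferes, will be handled separately by a trivial positivity argument.

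\emph{Main term.} Since $X$ is integer-valued, the event $\{\min_{0\le j\le 3m}X_j\ge \zeta m^2\}$ coincides with $\{\min_{0\le j\le 3m}X_j\ge a\}$, where $a=\lceil \zeta m^2\rceil$. If $\zeta m^2 = 0$ the event is trivially sure, so assume $a\ge 1$. Provided $a<m^2$, \Cref{lem:tail_bound_minimum} gives
\[\p_{m^2}\Big(\min_{0\le j\le 3m}X_j\ge a\Big)\ge \Big(1-\tfrac{1}{(3m+1)^2}\Big)^{a}-\Big(1-\tfrac{1}{(6m+1)^2}\Big)^{m^2}.\]
By the left inequality in~\eqref{eq:bernoulli}, the first term is at least
\[1-\frac{a}{(3m+1)^2}\ge 1-\frac{\zeta m^2+1}{9m^2}\ge 1-\frac{\zeta}{9}-\frac{1}{9m^2}.\]
The second term is at most $\exp(-m^2/(6m+1)^2)\le e^{-1/49}$, since $6m+1\le 7m$. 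With $c_0:=1-e^{-1/49}>0$, the probability is therefore at least $c_0-\zeta-1/(9m^2)$.

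\emph{Choosing the constant.} Fix $M_0$ with $1/(9M_0^2)\le c_0/2$. For $m\ge M_0$ and $\zeta<c_0/2$, the condition $a<m^2$ holds: it needs $\zeta m^2+1<m^2$, which is true since $\zeta<1/2$ and $m\ge 2$ (choose $M_0\ge2$). The bound above then gives at least $c_0/2-\zeta$.

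\emph{Small $m$.} For $m<M_0$ and $\zeta<1$, we have $a\le m^2$. So the probability is at least
\[p_m:=\p_{m^2}\big(X_j\ge m^2\ \forall j\le 3m\big)\ge \theta(1)^{3m^3}>0,\]
by considering the event that every individual has exactly one child for $3m$ generations; note $\theta(1)=1/8>0$. Setting
\[c_{\ref{lem:dropping_below_initial}}:=\min\big(c_0/2,\ \min_{m<M_0}p_m\big)>0,\]
the claim follows in both regimes, since $p_m\ge c_{\ref{lem:dropping_below_initial}}\ge c_{\ref{lem:dropping_below_initial}}-\zeta$.

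The only delicate point is the rounding of $\zeta m^2$. It introduces the additive error $1/(9m^2)$ and can violate the hypothesis $a<x$ for tiny $m$; the split at $M_0$ is designed precisely to absorb this. Everything else is a direct substitution into the two bounds already available.
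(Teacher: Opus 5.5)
Your proof is correct and follows essentially the paper's route: apply \Cref{lem:tail_bound_minimum} with $x=m^2$, $n=3m$ and threshold $\zeta m^2$, then bound the two resulting terms separately. The one difference is that the paper uses $a=\zeta m^2$ directly as a real number, which \Cref{lem:tail_bound_minimum} permits because its proof only uses that $t\mapsto 1-f_n(0)^t$ is increasing in real $t$. It then bounds the first term by $1-\zeta$ uniformly in $m$ via $\log(1-y)\ge -y/(1-y)$, so your ceiling, the resulting $1/(9m^2)$ error and the separate small-$m$ positivity argument are unnecessary, though harmless.
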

\begin{proof}
\noindent\steplabel{1}{step:dropinit_1}\emph{Step 1. Application of \Cref{lem:tail_bound_minimum}.} Fix $\zeta \in (0,c_{\ref{lem:dropping_below_initial}})$.  By \Cref{lem:tail_bound_minimum}, applied with $n=3m$ and with the real number $a = \zeta m^2$,
\[\p_{m^2}\!\left(\min_{0 \le j \le 3m} X_j \ge \zeta m^2 \right) \ge \left(1-\frac{1}{(3m+1)^2}\right)^{\zeta m^2}-\left(1-\frac{1}{(6m+1)^2}\right)^{m^2}.\]

\noindent\steplabel{2}{step:dropinit_2}\emph{Step 2. The first term.}  We use that $\log(1-y) \ge -y/(1-y)$ for $y \in (0,1)$, which gives
\[\left(1-\frac{1}{(3m+1)^2}\right)^{\zeta m^2} \ge \exp\left(-\frac{\zeta m^2}{(3m+1)^2-1}\right) = \exp\left(-\frac{\zeta m}{9m+6}\right) \ge 1-\frac{\zeta m}{9m+6} \ge 1-\zeta.\]

\noindent\steplabel{3}{step:dropinit_3}\emph{Step 3. The second term and conclusion.}  For the second term, using~\eqref{eq:bernoulli} we get
\[\left(1-\frac{1}{(6m+1)^2}\right)^{m^2} \le 1-\vartheta+\frac{\vartheta^2}{2}, \qquad \text{where}\quad \vartheta=\frac{m^2}{(6m+1)^2}.\]
For every $m \in \N$ we have $\vartheta \in [\frac{1}{49},\frac{1}{36}]$, and $t \mapsto t-\frac{t^2}{2}$ is increasing on $[0,1]$, so the right-hand side is at most $1-c$ for a universal constant $c \in (0,1)$.  Combining the two bounds shows that the probability in the statement is at least $c-\zeta$, which is the assertion of the lemma with $c_{\ref{lem:dropping_below_initial}}=c$.
\end{proof}

\begin{lemma}
\label{lem:dropping_below_strong}
There exists a constant $c_{\ref{lem:dropping_below_strong}} > 0$ so that for each $\gamma \in (0,1)$ we have that
\[\p_{m^2}\!\left(\min_{0 \le j \le \gamma m} X_j \ge c_{\ref{lem:dropping_below_strong}} m^2 \right) \ge 1-e^{-c_{\ref{lem:dropping_below_strong}}  \gamma^{-2}} \quad \text{for all}\quad m \in \N. \]
\end{lemma}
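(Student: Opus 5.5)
The idea is to split the initial population into many independent groups, apply \Cref{lem:dropping_below_initial} to each group, and then use a Chernoff bound on the number of groups that behave well. A single group of $m^2$ individuals only survives with positive probability, not with probability close to $1$. Splitting into about $\gamma^{-2}$ independent groups, each on its natural time scale $\gamma m$, turns that constant success probability into one that is exponentially close to $1$ in $\gamma^{-2}$.

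\emph{Setup.} First dispose of the trivial regime. If $\gamma m<1$, then after rounding the minimum over $0\le j\le \gamma m$ is just $X_0=m^2$, and the claim holds for any $c_{\ref{lem:dropping_below_strong}}\le 1$. So assume $s:=\lfloor \gamma m\rfloor \ge 1$, and note $s\ge \gamma m/2$. Put $k:=\lfloor m^2/s^2\rfloor$. Then $k\ge 1$, and $k \ge \gamma^{-2}/2$ since $m^2/s^2\ge\gamma^{-2}\ge 1$. Partition $ks^2\le m^2$ of the initial individuals into $k$ disjoint groups of exactly $s^2$ individuals, discarding the rest. Monotonicity is fine here because $X_j$ only increases if more initial individuals are added. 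By the branching property, the descendant processes $X^{(1)},\dots,X^{(k)}$ of the groups are i.i.d.\ with law $\p_{s^2}$, and $X_j\ge \sum_{l=1}^k X^{(l)}_j$ for all $j$.

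\emph{One group.} Fix $\zeta:=c_{\ref{lem:dropping_below_initial}}/2$ and set $p:=c_{\ref{lem:dropping_below_initial}}-\zeta>0$. Call group $l$ \emph{good} if $\min_{0\le j\le 3s}X^{(l)}_j\ge \zeta s^2$. By \Cref{lem:dropping_below_initial} with $m$ replaced by $s$, each group is good independently with probability at least $p$. Since $3s\ge \gamma m$ up to rounding, a good group contributes at least $\zeta s^2$ to $X_j$ for every $j\le\gamma m$. So if $N$ denotes the number of good groups, then on $\{N\ge pk/2\}$ we get
\[
\min_{0\le j\le\gamma m}X_j\ \ge\ \frac{pk}{2}\,\zeta s^2\ \ge\ \frac{p\zeta}{2}\cdot\frac{m^2}{2}.
\]
The last inequality uses $ks^2\ge m^2/2$, which follows from $k\ge m^2/s^2-1$ together with $m^2/s^2\ge 1$ (when $m^2/s^2<2$ we have $k=1$ and $s^2> m^2/2$ directly).

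\emph{Concentration.} The variable $N$ stochastically dominates $\Bin(k,p)$. Hence $k-N$ is dominated by $\Bin(k,1-p)$. Applying \eqref{eq:chernoff_hoeffding} with success probabilities $q=1-p$ and $1-p/2$, and then the lower bound \eqref{eq:kl_lower_bound}, gives
\[
\p(N<pk/2)\le \exp\big(-\tfrac{(p/2)^2}{2}\,k\big)\le \exp\big(-\tfrac{p^2}{16}\gamma^{-2}\big).
\]
(If $p/2$ is not below $1$, replace it by a smaller constant.) Taking $c_{\ref{lem:dropping_below_strong}}:=\min(p\zeta/4,\,p^2/16,\,1)$ proves the lemma.

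The only delicate points are the rounding bookkeeping, namely ensuring that the groups are perfect squares and that $ks^2$ is comparable to $m^2$. There is no real probabilistic obstacle, since \Cref{lem:dropping_below_initial} supplies exactly the uniform-in-$m$ input needed.
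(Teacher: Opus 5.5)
Your proof is correct and follows the paper's route. Both split the initial population into roughly $\gamma^{-2}$ independent groups of size about $\gamma^2 m^2$, apply \Cref{lem:dropping_below_initial} to each with $\zeta = c_{\ref{lem:dropping_below_initial}}/2$, and use a Chernoff bound on the number of groups that stay high; your constant $p^2/16$ is just the paper's $\CD(p/2\|p)$ weakened via \eqref{eq:kl_lower_bound}. You are more careful than the paper about rounding: taking $s=\lfloor \gamma m\rfloor$ and groups of exactly $s^2$ individuals lets \Cref{lem:dropping_below_initial} apply verbatim with $s \in \N$.
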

\begin{proof}
\noindent\steplabel{1}{step:dropstrong_1}\emph{Step 1. Splitting into $\gamma^{-2}$ independent groups.} Set $\zeta = c_{\ref{lem:dropping_below_initial}}/2$ and $p = c_{\ref{lem:dropping_below_initial}} - \zeta = c_{\ref{lem:dropping_below_initial}}/2$, so that all of the constants below are universal.  We can break a Galton--Watson process starting at height $m^2$ into a sum of $\gamma^{-2}$ i.i.d.\ Galton--Watson processes starting at height $m^2 \gamma^2$. \Cref{lem:dropping_below_initial} then implies that each of these has probability at least $p$ that its height never drops below $m^2 \gamma^2 \zeta$ before time $3 m \gamma$, and in particular before time $m \gamma$.  If a proportion of at least $p/2$ of these processes achieve this, then the total height will never have dropped below $m^2 p \zeta/2$ (there are then at least $\gamma^{-2}p/2$ of them, each contributing at least $m^2\gamma^2\zeta$).

\noindent\steplabel{2}{step:dropstrong_2}\emph{Step 2. A binomial lower bound.} Thus,
\[\p_{m^2}\!\left(\min_{0 \le j \le  \gamma m} X_j \ge m^2 p \zeta/2 \right) \ge \p\!\left(Z \ge \gamma^{-2} p/2 \right),\]
where $Z \sim \Bin(\gamma^{-2}, p)$ (the number of sub-processes which stay above $m^2\gamma^2\zeta$ stochastically dominates $Z$, since they are independent and each succeeds with probability at least $p$).

\noindent\steplabel{3}{step:dropstrong_3}\emph{Step 3. Conclusion.} By the Chernoff--Hoeffding inequality applied to the lower tail of $Z$, whose mean is $\gamma^{-2}p$,
\[\p\!\left(Z \ge \gamma^{-2} p/2 \right) \ge 1-\exp\left(-\CD\left( p/2 \| p \right) \gamma^{-2} \right)\ge 1-\exp(-c_{\ref{lem:dropping_below_strong}} \gamma^{-2}),\]
where we may take $c_{\ref{lem:dropping_below_strong}} = \min\!\big(\CD(p/2\,\|\,p),\, p\zeta/2\big)$, which is a universal constant.  Combining these estimates completes the proof of the lemma.
\end{proof}

We finish this section with two lemmas giving bounds on the distance between points on the same vertical line in the LHPT. Morally speaking, the graph distance between the points $(n, h)$ and $(n, h+r)$ should be about $\sqrt{r}$ with high probability. Here we use $\CL_{[a,b]}$ to denote the triangulation obtained by restricting the LHPT to the strip $[a,b] \times \Z$, i.e., the sub-triangulation formed by the vertices $(i,j)$ with $a \le i \le b$, the slots between two consecutive lines $\{i\}\times \Z$ and $\{i+1\} \times \Z$ with $a \le i <b$, and the Boltzmann triangulations filling them.  We allow $b=\infty$, in which case we write $\CL_{[a,\infty)}$.

\begin{lemma}[Upper bound for the distance between points on the same vertical line]\label{lem:upper_bound_distances}
Let $x \ge 1$ be an integer, and let $A_x$ be the event that there is a path of length less than $(2r+1)x$ joining $(n,0)$ to $(n, r^2)$ that is entirely contained in the bubble of initial height $r^2$ started from $(n,0)$ and, moreover, in the strip $\CL_{[n,n+r]}$. Then for any integers $r \ge 1$ and $n \ge 0$,
\[\p(A_x) \ge 1-\exp\left(-\frac{(x-1)^2}{2x}\right).\]
\end{lemma}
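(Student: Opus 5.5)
The idea is to build the short path out of two kinds of moves: geodesic detours of length at most $2r$ around blocks of skeleton trees that die quickly, and single vertical boundary edges across the trees that survive. Then I would control the number of surviving trees with the Chernoff--Hoeffding bound~\eqref{eq:chernoff_hoeffding}.

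The bubble of initial height $r^2$ started from $(n,0)$ is encoded by the $r^2$ i.i.d.\ $\theta$-Galton--Watson trees rooted at $(n,j+1/2)$, $0\le j<r^2$. Call tree $j$ \emph{surviving} if it has a vertex at generation $r$. By~\eqref{eq:bubble_lifetime} this has probability $q=(r+1)^{-2}$. So the number $S$ of surviving trees is $\Bin(r^2,q)$.

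\emph{The path.} Partition $\{0,\dots,r^2-1\}$ into the surviving indices and the maximal blocks $\{j,\dots,j'\}$ of consecutive non-surviving indices. There are at most $S+1$ such blocks.

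For a block $\{j,\dots,j'\}$, the forest of trees $j,\dots,j'$ is extinct by generation $r$. By the coalescence description in \Cref{subsubsec:geodesic_structure}, the bottom-most geodesics from $(n,j)$ and $(n,j'+1)$ therefore coalesce by time $n+r$. Following the first up to the coalescence point and the second back gives a path from $(n,j)$ to $(n,j'+1)$ of length at most $2r$.

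For a surviving index $i$, I would use the boundary edge $(n,i)$--$(n,i+1)$, of length $1$.

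Concatenating these pieces from bottom to top joins $(n,0)$ to $(n,r^2)$ with length at most
\[2r(S+1)+S=(2r+1)S+2r.\]
All pieces lie in $\CL_{[n,n+r]}$. They also lie in the bubble, because they run along bottom-most geodesics started from heights in $[0,r^2]$ before time $n+r$, and these geodesics are sandwiched between the bubble's bottom and top geodesics by planarity/monotonicity. Alternatively, they run along the segment $\{n\}\times[0,r^2]$. I would check this sandwiching carefully, since it is the one geometric point that needs care: bottom-most geodesics from ordered starting points never cross.

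\emph{Probability.} On $\{S\le x-1\}$ the length is at most $(2r+1)(x-1)+2r<(2r+1)x$, so $A_x$ holds. It remains to bound $\p(S\ge x)$.

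If $x>r^2$ this probability is $0$. Otherwise, apply~\eqref{eq:chernoff_hoeffding} with $n=r^2$, $p=x/r^2$ and $q=(r+1)^{-2}<p$ (when $x=r^2$, the event $\{S\ge r^2\}$ has probability $q^{r^2}$, and the same bound is easily checked directly). The lower bound~\eqref{eq:kl_lower_bound} gives
\[\p(S\ge x)\le \exp\!\left(-r^2\frac{(p-q)^2}{2p}\right)=\exp\!\left(-\frac{\bigl(x-r^2/(r+1)^2\bigr)^2}{2x}\right)\le \exp\!\left(-\frac{(x-1)^2}{2x}\right),\]
using $r^2/(r+1)^2\le 1\le x$. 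I expect no real obstacle here beyond the geometric containment check above.
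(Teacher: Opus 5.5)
Your proposal is correct and follows essentially the same route as the paper: the same path built from bottom-most-geodesic detours of length at most $2r$ around blocks of non-surviving trees plus unit vertical steps across the surviving ones, and the same Chernoff--Hoeffding bound combined with \eqref{eq:kl_lower_bound}. The differences are cosmetic. You use the exact $\Bin(r^2,(r+1)^{-2})$ law and absorb $r^2/(r+1)^2\le 1$ at the end, whereas the paper dominates by $\Bin(r^2,r^{-2})$. You also treat $x=r^2$ (where $p=1$) separately, which the paper leaves implicit.
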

\begin{proof}
\noindent\steplabel{1}{step:updist_1}\emph{Step 1. Reduction to $n=0$ and to $x \le r^2$.} The event $A_x$ is measurable with respect to $\CL_{[n,\infty)}$, which has the same law as $\CL$ (with $(n,0)$ playing the role of the origin, since the bottom-most geodesic started from $(0,0)$ passes through $(n,0)$).  Hence $\p(A_x)$ does not depend on $n$, and so we can assume without loss of generality that $n=0$.

We may also assume that $x \le r^2$: otherwise at most $r^2<x$ trees can survive, so that the event considered in the next step has probability one.

\noindent\steplabel{2}{step:updist_2}\emph{Step 2. Few trees survive for $r$ generations.} There are $r^2$ trees in the skeleton between the points $(0,0)$ and $(0,r^2)$, and each one has probability $1/(r+1)^2 \le 1/r^2$ of surviving for at least $r$ generations, i.e., of having at least one individual at generation $r$.  Consequently, the number of trees which survive for at least $r$ generations is stochastically dominated by a $\Bin(r^2,1/r^2)$ random variable $Z$. Thus
\[\p(\text{fewer than $x$ trees survive at least $r$ generations}) \ge 1 - \p(Z \ge x).\]
By the Chernoff--Hoeffding inequality,
\[\p(Z \ge x) \le \exp\left(-\CD\left(\frac{x}{r^2} \left\| \frac{1}{r^2} \right.\right)r^2\right) \le \exp\left(-\frac{\left(\frac{x}{r^2}-\frac{1}{r^2}\right)^2}{\frac{2x}{r^2}} \cdot r^2\right)=\exp\left(-\frac{(x-1)^2}{2x}\right).\]
Here $\CD$ is as in~\eqref{eq:kl_divergence}, and for the second inequality we used~\eqref{eq:kl_lower_bound}.

\noindent\steplabel{3}{step:updist_3}\emph{Step 3. Construction of the path.} So now consider the event that fewer than $x$ trees survive, which implies the number of surviving trees $m$ satisfies $m \le x-1$, and hence $m+1 \le x$. Let $(0, i_1), \dots, (0, i_m)$ denote the points on the boundary between $(0,0)$ and $(0, r^2)$ such that the tree of the skeleton rooted at the slot $(0,i_k+\frac12)$ immediately above the point survives for at least $r$ generations, listed in increasing order; we set $i_0 = -1$ and $i_{m+1} = r^2$. We can then build a path of length at most $(2r+1)x - 1 < (2r+1)x$ between $(0,0)$ and $(0, r^2)$ as follows. For each of the $m+1$ intervals $0 \le j \le m$, the slots lying strictly between the points $(0,i_j+1)$ and $(0,i_{j+1})$ are precisely those at heights $i_j+\frac32,\dots,i_{j+1}-\frac12$, none of which is one of the surviving slots $(0,i_k+\frac12)$; hence the trees rooted at them are all extinct by generation $r$, and so the bottom-most geodesics starting from $(0, i_j+1)$ and $(0, i_{j+1})$ must coalesce before reaching the line $\{r\} \times \Z$, so the path that goes right along the bottom-most geodesic from $(0, i_j+1)$, then back left along the bottom-most geodesic from $(0, i_{j+1})$ has length at most $2r$. We do this $m+1$ times to get from $(0,0)$ to $(0, r^2)$, going right and returning left along all the bottom-most geodesics, and then to get from $(0, i_{j})$ to $(0, i_{j}+1)$ (of which there are $m$ transitions), we simply take a step up along the corresponding edge of the boundary of $\CL$. This produces a path of length at most 
\[2r(m+1) + m = (2r+1)(m+1) - 1 \le (2r+1)x - 1 < (2r+1)x\]
between $(0,0)$ and $(0, r^2)$, and it is entirely contained in the bubble between these points.  Moreover, by construction, it is clearly contained in $\CL_{[0,r]}$.
\end{proof}

The next lemma gives a lower bound, and is a generalization of \cite[Proposition 15]{cl2019fpp} to the case where the points are not required to lie on the boundary of $\CL$.

\begin{lemma}\label{lem:lower_bound_distances}
For every $\kappa>0$ there is an integer $K \ge 1$, depending only on $\kappa$, such that the following holds for all integers $r \ge 1$ and $t_0 \ge 3r$.  Let $h_0 \in \Z$, let $\Gamma$ be the bottom-most geodesic started from $(0,h_0)$, and let $h_{\Gamma}(t)$ denote the height of $\Gamma$ at time $t$.  Then, with conditional probability at least $1-\kappa$ given $\CL_{[0,t_0-3r]}$, every path of $\CL$ of length less than $r$ which starts at the vertex of $\Gamma$ at some time $i$ with $|i-t_0| \le 2r$ visits only vertices $(t,h)$ with
\[h<h_{\Gamma}(t)+Kr^2.\]
\end{lemma}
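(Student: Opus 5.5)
The idea is to build a barrier above $\Gamma$: a geodesic $\Gamma'$ started at a larger height, such that every path from $\Gamma$ that reaches height $h_\Gamma(t)+Kr^2$ must first cross $\Gamma'$, and crossing it must be expensive.

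\emph{Setting up the barrier.} Look at the line $\{t_0-3r\}\times\Z$. The conditioning on $\CL_{[0,t_0-3r]}$ fixes the heights there, in particular the height $h_1:=h_\Gamma(t_0-3r)$. Everything to the right of this line, i.e.\ $\CL_{[t_0-3r,\infty)}$, is again an LHPT, independent of the conditioning; this is the same shift invariance used in \Cref{step:updist_1} of the proof of \Cref{lem:upper_bound_distances}. So it suffices to prove the statement with $t_0$ replaced by $3r$, origin at $(0,h_1)$, and the relevant times in $[r,5r]$.

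Set $m=\lfloor \sqrt{K}\rfloor r$, so $m^2\approx Kr^2$. Consider the bubble $B$ started at $(0,h_1)$ with initial height $Km^2$-ish, say $L=Kr^2/2$. By \Cref{lem:dropping_below_strong}, applied with $m^2=L$ and $\gamma$ chosen so that $\gamma\sqrt{L}=6r$, i.e.\ $\gamma\asymp K^{-1/2}$, the height of $B$ stays at least $c_{\ref{lem:dropping_below_strong}}L$ up to time $6r$. This holds with probability at least $1-e^{-cK}$. By \Cref{lem:tail_bound_maximum}, its height also stays at most $L\kappa^{-1}$ with probability $1-\kappa$, which is only needed for bookkeeping. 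On this event, the top geodesic $\Gamma'$ of $B$ lies at height at least $h_\Gamma(t)+c L$ for every $t\le 6r$. It also lies below $h_\Gamma(t)+Kr^2$ once we choose $L$ appropriately relative to $K$, using the maximum bound.

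\emph{Paths crossing the barrier.} A path of length less than $r$ starting on $\Gamma$ at time $i\in[r,5r]$ stays in times $(0,6r)$, since every step changes time by at most one. To reach height $h_\Gamma(t)+Kr^2$ it must cross the region between $\Gamma$ and $\Gamma'$, where the vertical gap is at least $cL$ throughout. So it suffices to show the following: with probability at least $1-\kappa/2$, every path inside $B\cap\CL_{[0,6r]}$ joining $\Gamma$ to $\Gamma'$ has length at least $r$.

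This is the main obstacle. It is a crossing (vertical distance) lower bound inside a bubble away from the boundary, which is exactly the analogue of \cite[Proposition 15]{cl2019fpp}. My plan is to split the gap $[h_\Gamma(r),h_\Gamma(r)+cL]$ at time $r$ into $N\asymp K$ consecutive sub-bubbles of initial height about $r^2$, each delimited by bottom-most geodesics. The following steps are then needed.
\begin{enumerate}[(a)]
\item A crossing of $B$ must cross each of these sub-bubbles, or pass around them to the left within time $\ge 0$.
\item By independence, the number of sub-bubbles whose lifetime is at least $5r$ is binomial with mean of order $K\cdot r^2/(5r)^2\asymp K$. Chernoff then gives order-$K$ long-lived sub-bubbles with probability $1-e^{-cK}$.
\item Crossing a long-lived sub-bubble horizontally would then require, via the ``a path of length $<r$ stays within a time window'' constraint, a number of steps comparable to the vertical distance of the boundary arc it separates. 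Here I would invoke \cite[Proposition 15]{cl2019fpp} on each sub-bubble, whose left boundary is a boundary segment of a fresh LHPT.
\end{enumerate}

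To make (c) rigorous I would instead use a union bound over the $O(1)$ starting times and the geometric structure directly. Choosing $K$ large enough that $e^{-cK}$ and the Proposition 15 failure probabilities total less than $\kappa$ then completes the argument.
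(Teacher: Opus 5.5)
\textbf{There is a genuine gap: the crossing lower bound is never established.}

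Your setup matches the paper's. You condition at time $t_0-3r$, use the fresh LHPT $\CL_{[t_0-3r,\infty)}$, and place a barrier: the top geodesic $\Gamma'$ of a bubble started on its boundary line, kept alive up to time $6r$ and kept below height $Kr^2$ by Doob's inequality. Your constants need repair: with $L=Kr^2/2$ the bound $L/\kappa$ exceeds $Kr^2$. The fix is to choose the barrier height $K'r^2$ first and then take $K\ge 3K'/\kappa$.

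The problem is the step you flag as the main obstacle, and the sub-bubble plan fails there for three concrete reasons.
\begin{enumerate}
\item Sub-bubbles started at (reduced) time $r$ give no barrier at times in $(0,r)$. A path of length $<r$ started on $\Gamma$ at time $r$ can reach those times. Also, \cite[Proposition 15]{cl2019fpp} applied to $\CL_{[r,\infty)}$ only controls paths that stay in that half-plane.
\item Crossing a single long-lived sub-bubble of initial height $r^2$ need not cost anything like $r$ steps. If its height is $1$ at some time in the window, its two bounding geodesics are joined there by one edge.
\item Proposition 15 bounds the distance between two \emph{boundary} points at vertical separation $\ge K'r^2$. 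It says nothing about the distance between two geodesics, and nothing in your (a)--(c) converts one statement into the other.
\end{enumerate}

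The missing idea is exactly that conversion, and once you have it the sub-bubbles are unnecessary. In reduced coordinates, $\Gamma$ and $\Gamma'$ are bottom-most geodesics started from the boundary points $(0,h_1)$ and $(0,h_1+K'r^2)$ of the fresh LHPT. So a vertex of either one at time $m$ is at distance exactly $m$ from its starting point.

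Take a path of length $<r$ from the vertex of $\Gamma$ at time $i\in[r,5r]$ that reaches a vertex $(t,h)$ with $h\ge h_\Gamma(t)+Kr^2$.
\begin{itemize}
\item Its start lies strictly below $\Gamma'$, because $\Gamma'$ stays strictly above $\Gamma$ up to time $6r$.
\item Its end lies strictly above $\Gamma'$, because $\Gamma'$ stays below $h_\Gamma+Kr^2$.
\item By planarity it therefore hits $\Gamma'$ at some vertex at time $m\le 6r$.
\end{itemize}
Now concatenate: follow $\Gamma$ back to $(0,h_1)$, then the path up to the hitting point, then $\Gamma'$ back to $(0,h_1+K'r^2)$. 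This is a path of length at most $5r+r+6r=12r$, lying inside $\CL_{[t_0-3r,\infty)}$, between two boundary points at vertical distance $K'r^2$. Proposition 15 at scale $13r$, with $K'$ large in terms of $\kappa$, rules this out with probability at least $1-\kappa/3$.

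With this argument you only need $\Gamma'$ to survive (stay strictly above $\Gamma$) up to time $6r$, which follows from~\eqref{eq:bubble_lifetime}. So neither \Cref{lem:dropping_below_strong} nor the positive gap $cL$ is needed.
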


Reversing the paths, the lemma says equivalently that with the same conditional probability no vertex $(t,h)$ with $h \ge h_{\Gamma}(t)+Kr^2$ is within distance $r$ of the portion of $\Gamma$ lying within $2r$ of time $t_0$.  Taking $h_0=0$, so that $\Gamma$ is the line $\Z_{\ge 0}\times\{0\}$ and $h_{\Gamma}\equiv 0$, gives back a statement similar to \cite[Proposition 15]{cl2019fpp}, but with the far endpoint free to lie at any time rather than on a prescribed vertical line.

\begin{proof}
\noindent\steplabel{1}{step:lowdist_1}\emph{Step 1. Reduction to $h_0=0$ and $t_0=3r$.} Set $s=t_0-3r \geq 0$.  A path of length less than $r$ which starts at a vertex of $\Gamma$ at a time $i$ with $|i-t_0| \le 2r$ is contained in the strip $\CL_{[s,t_0+3r]}$.  The event in the statement is therefore measurable with respect to $\CL_{[s,\infty)}$.  Now $h_{\Gamma}(s)$ is measurable with respect to $\CL_{[0,s]}$, whereas $\CL_{[s,\infty)}$ is independent of $\CL_{[0,s]}$ and has the same law as $\CL$, by the Markov property of the LHPT (the trees of the skeleton at generation $s$ form again an i.i.d.\ family of Galton--Watson trees with offspring distribution $\theta$, and the slots are filled with independent Boltzmann triangulations).  Since the law of $\CL$ is moreover invariant under vertical translations, we may translate vertically so that $(s,h_{\Gamma}(s))$ becomes the origin; the restriction of $\Gamma$ to $[s,\infty)$ then becomes the bottom-most geodesic started from the origin, which is the line $\Z_{\ge 0}\times\{0\}$, and heights become heights above $\Gamma$.  We may therefore assume that $h_0=0$ and $t_0=3r$, so that the starting points are the vertices $(i,0)$ with $i \in [r,5r]$ and every path under consideration is contained in $\CL_{[0,6r]}$.

\noindent\steplabel{2}{step:lowdist_2}\emph{Step 2. Three events of high probability.} By \cite[Proposition 15]{cl2019fpp}, applied with $13r$ in place of $r$, there is an integer $K' \ge 1$ such that for every $r \ge 1$,
\[\p\!\left(\inf_{j \ge K'r^2}d_{\mathrm{gr}}^{\CL}((0,0),(0,j)) \ge 13r\right) \ge 1-\frac{\kappa}{3}.\]
Additionally,
\[\p_{K'r^2}(X_{6r}>0)=1-\left(1-\frac{1}{(6r+1)^2}\right)^{K'r^2}.\]
This expression converges uniformly over $r \in [1,\infty)$ to 1 as $K' \to \infty$ (indeed $K'r^2/(6r+1)^2 \ge K'/49$ for $r \ge 1$, so that the expression is at least $1-e^{-K'/49}$), and so by choosing $K'$ larger if necessary, we also have that for all $r \ge 1$,
\[\p_{K'r^2}(X_{6r}>0) \ge 1-\frac{\kappa}{3}.\]
Note that enlarging $K'$ preserves the previous estimate as well, since it only shrinks the set of $j$ over which the infimum is taken.  Finally, \Cref{lem:tail_bound_maximum}, applied with $y=K'r^2$ and $x=Kr^2$, gives
\[\p_{K'r^2}\!\left(\sup_{j \ge 0} X_j \ge Kr^2\right) \le \frac{K'}{K},\]
so that upon taking $K \ge 3K'/\kappa$ we have, for all $r \ge 1$,
\[\p_{K'r^2}\!\left(\sup_{j \ge 0} X_j<Kr^2\right) \ge 1-\frac{\kappa}{3}.\]
Now fix $r \ge 1$, and let $A$ be the event where the following three things happen:
\begin{enumerate}[(i)]
\item\label{it:sep_distance} We have $\inf_{j \ge K'r^2} d_{\mathrm{gr}}^{\CL}((0,0),(0,j)) \ge 13r$.
\item\label{it:sep_survive} The trees in the skeleton started between $(0,0)$ and $(0, K'r^2)$ do not all die before generation $6r$.
\item\label{it:sep_population} The total population size of these trees never reaches $Kr^2$.
\end{enumerate}
By the preceding remarks and a union bound, we have that $\p(A) \ge 1-\kappa$.  We will use repeatedly that, by the way the skeleton is embedded, the bottom-most geodesic $\gamma$ started from $(0,K'r^2)$ passes through the point $(m,X_m)$ at time $m$, where $X_m$ denotes the total population at generation $m$ of the $K'r^2$ trees rooted between $(0,0)$ and $(0,K'r^2)$.  In particular, \eqref{it:sep_survive} and~\eqref{it:sep_population} say that $\gamma$ stays strictly above the line $\Z_{\ge0} \times \{0\}$ up to time $6r$, and that it stays strictly below height $Kr^2$ at all times.

\noindent\steplabel{3}{step:lowdist_3}\emph{Step 3. The separation argument.} Suppose that $A$ occurs, and that there is a path of length less than $r$ joining some point $(i,0)$ with $i \in [r,5r]$ to some point $(t,h)$ with $h \ge Kr^2$.  By \Cref{step:lowdist_1} the path is contained in $\CL_{[0,6r]}$; in particular $0 \le t \le 6r$.  By~\eqref{it:sep_survive} the starting point $(i,0)$ lies strictly below $\gamma$, and by~\eqref{it:sep_population} the endpoint $(t,h)$ lies strictly above it.  Now $\gamma$ crosses the strip $\CL_{[0,6r]}$ from its left side to its right side and therefore separates it into two parts, and a path in a planar map cannot cross $\gamma$ without visiting one of its vertices; hence the path must cross $\gamma$. Let $(m, \ell)$ be the crossing point; note that $0 \le m \le 6r$, and that this point has distance exactly $m$ to $(0, K'r^2)$ since it is on its bottom-most geodesic. Then the path
\[(0,0) \to (i,0) \to (m, \ell) \to (0, K'r^2)\]
has length at most $12r$, since $i \le 5r$, the subpath from $(i,0)$ to $(m,\ell)$ has length less than $r$, and the subpath from $(m,\ell)$ to $(0,K'r^2)$ has length $m \le 6r$. Here the first and last arrows are the bottom-most geodesics started from $(0,0)$ and $(0, K'r^2)$ respectively, and the middle arrow is the path of length less than $r$ joining $(i, 0)$ to $(t,h)$ but stopped upon hitting $(m, \ell)$. This contradicts~\eqref{it:sep_distance} in the definition of $A$, and so no such path exists. This concludes the proof.
\end{proof}

\section{Good bubbles}
\label{sec:exploration_process}

As discussed in \Cref{subsec:outline}, the overall strategy for proving \Cref{thm:main} is to establish the existence of certain Laakso-like structures in the LHPT.  We will do so by considering ``bubbles'' whose bottom and top boundaries consist of geodesics in the LHPT.  These bubbles will play the role of the diamond structures which appear in the Laakso graph.  To define these bubbles, we (roughly speaking) fix a small scaling parameter $\eps>0$ and a large parameter $B > 0$.  We then explore the bottom-most geodesic started from height $B \eps^4 n^2$ and we let it run until it coalesces with the geodesic segment of length $n$ from which the exploration starts; we call the latter the \emph{starting segment}, and we always take it to be the segment from $(0,0)$ to $(n,0)$.  In the terminology of \Cref{def:bubble}, we are then looking at the bubble of initial height $B\eps^4n^2$ started at $(0,0)$, whose bottom geodesic contains the starting segment and whose top geodesic is the geodesic being explored. See the left side of \Cref{fig:exploration process}.  Eventually, we will take $n$ to depend on $\eps$ in such a way that $n(\eps) \to \infty$ faster than any polynomial in $\eps^{-1}$, so that the smaller $\eps$ is, the more room we have in the LHPT to discover these bubbles. The estimates of the present section, however, require only the polynomial lower bound $n \ge \eps^{-4}$; see \Cref{fn:parameters}.

\begin{figure}
\includegraphics[scale=0.8]{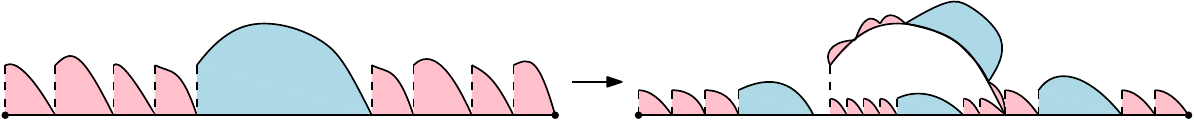}
\caption{The exploration process. In pink are ``bad'' bubbles, and in blue are ``good'' bubbles. When a good bubble is present, we can repeat the exploration at a smaller scale around the good bubble, as indicated in the right figure.}
\label{fig:exploration process}
\end{figure}

By \Cref{lem:expected_length_of_bubble}, a bubble will typically have lifetime approximately $\sqrt{B}\eps^2n$ (i.e., the square root of the initial height).  If we think of exploring the bubbles from left to right in the LHPT at equally spaced times $j\eps^2n$, then every so often, a bubble will survive for much longer than this. If a bubble does this and satisfies various other technical conditions that we list below, then we call it \emph{good}, and we can recursively search for bubbles along the parts of the starting segment before and after the bubble, below the bubble, and also on top of the bubble; see the right of \Cref{fig:exploration process}.  This scheme should be thought of as the stochastic analog of the way that the Laakso graph is built.  The length $n$ of the starting segment is now different, and so the initial height of these bubbles is also different, and the required lifetime needed for them to be good is different as well, and is given in terms of the new length. Note that we only consider good bubbles in the middle third of the segment; this is to ensure that the parts of the starting segment before and after the bubble are long enough so that we can perform the exploration on them and obtain a non-trivial structure.

Suppose that we have fixed a $\eps^{-2}/3 \le j \le 2\eps^{-2}/3$ and we are working on the event that a good bubble starts at time $j\eps^2n$ (we emphasize that we are not necessarily assuming that this is the first such good bubble).  Then the parts of the LHPT on top of, to the left of, and to the right of the good bubble are independent of the good bubble, but this is not the case for the part of the LHPT below the bubble.  For this reason, we divide our good bubbles into their ``bottom'' and ``top'' parts.  The bottom part of the bubble (discovered by the \emph{bottom exploration} described in \Cref{subsec:bottom_top_explorations}) will have maximal height much smaller than $\eps^2n^2$ and serves to decorrelate the behavior of the LHPT on the bottom of the bubble from the top of the bubble. In \Cref{subsec:bottom_exploration} and \Cref{subsec:prob_bubble_good}, we will describe and prove estimates for the top exploration, which discovers the top of the bubble, and use this to obtain bounds for the probability of a bubble being good.  The bottom and top explorations are designed to be independent of each other which will aid in the aforementioned decorrelation.  Finally, in \Cref{subsec:no_good_bubbles} we prove a tail bound for the probability that there are not any good bubbles along a geodesic of length $n$ in the LHPT.

\subsection{Definition of the bottom and top explorations}
\label{subsec:bottom_top_explorations}

Let us begin by giving the definition of the bottom exploration.  Suppose that we have fixed a small parameter $u>0$, then large parameters $R,B>0$, and then a small parameter $\eps>0$.\footnote{\label{fn:parameters}Five parameters $u, R, D, B, \eps$ (and then $n = n(\eps)$) appear in this section, and the constraints on them go in both directions, so we record here the order in which they are to be chosen. First we fix $u \in (0,1/10)$ small, and then $k_0 = k_0(u)$ large, as in \Cref{step:botexp_1} of the proof of \Cref{prop:bottom_exploration_good} and \Cref{step:topsurv_4} of the proof of \Cref{lem:top_exploration_survives_long}. The constants $c_{\ref{prop:no_good_bubbles}}$ and $c_{\ref{lem:conditional_prob_given_failure}}$ from \Cref{prop:no_good_bubbles} and \Cref{lem:conditional_prob_given_failure} only depend on $u$. Next we fix $R$ large; then $\kappa>0$ small, which determines $K=K(\kappa)$ from \Cref{lem:lower_bound_distances}; and then $D>0$ small, depending on $R$ and $K$.  These three are chosen in \Cref{step:goodbubble_5,step:goodbubble_9} of the proof of the lower bound of \Cref{prop:good_bubble_probability}.  We then fix $B$ large, subject to $B \ge C_0 R^6$ and $B \ge C_0 k_0^2\,2^{k_0(1+2u)}$ for a sufficiently large universal constant $C_0$: the first is used in the proof of \Cref{prop:bottom_exploration_good}, so that the threshold $R^3\eps^4n^2$ is a small multiple of the initial height $\sqrt{B}\eps^4n^2$ of a slice, and the second at the end of the proof of \Cref{lem:top_exploration_survives_long}.  Finally we take $\eps>0$ small, depending on all of the above, and $n \ge \eps^{-4}$.  It matters that this lower bound on $n$ be \emph{polynomial} in $\eps^{-1}$: in \Cref{sec:distortion LHPT} the results below are applied at every scale down to $\eps^{-4}$ at a single fixed value of $\eps$.  (There the largest scale is taken to be $n=e^{\eps^{-2}}$.  That is needed only in order to have many scales available, not for any estimate below.)  Every lower bound on $n$ used here is of the required form: each asks only that $n$ exceed a fixed power of $\eps^{-1}$ times a constant depending on $u$ and $B$.  All constants in this section (including those implicit in the $O(\cdot)$ notation) may depend on $u$, $k_0$, $R$, $D$ and $B$, but never on $\eps$ or $n$; likewise ``for all $\eps>0$ sufficiently small'' means: for all $\eps$ below a threshold depending on those five parameters.}  In words, the height of the bottom exploration evolves as a Galton--Watson process starting from $\sqrt{B}\eps^4 n^2$ with upward jumps of size $\sqrt{B}\eps^4 n^2$ at a progressively denser set of times. These times are dense enough that the height process has a positive chance of never going below $R^3 \eps^4 n^2$ before time $R \eps n$, and sparse enough that with positive probability it remains $O(\eps^2 n^2)$ up to that time.  The exploration is defined precisely by starting off with an initial height of $\sqrt{B}\eps^4 n^2$ and then discovering the trees with height between $0$ and $\sqrt{B}\eps^4 n^2$.  Suppose that $1 \le k \le k_{\max} := \lfloor \log_2 (\eps^{-2}/\sqrt[4]{B}) \rfloor$ and we have defined the exploration up to time $j \sqrt[4]{B} 2^{-u k} \eps^2 n$ where $j \in \N$ satisfies
\[\sqrt[4]{B}2^{k/2} \eps^2 n \le j \sqrt[4]{B}2^{-u k} \eps^2 n \le \sqrt[4]{B}2^{(k+1)/2} \eps^2 n,\]
and let $H_{j,k}$ be the height of the bottom exploration at this time.  Then we continue the bottom exploration by adding to it the trees with height between $H_{j,k}$ and $H_{j,k} + \sqrt{B}\eps^4 n^2$.  The collection of $\sqrt{B}\eps^4n^2$ trees added at such a step, as well as the initial collection, is a bubble in the sense of \Cref{def:bubble}: the former is the bubble with initial height $\sqrt{B}\eps^4n^2$ started at $(t,H_{j,k})$, where $t$ denotes the time of the step, and the latter is the bubble with initial height $\sqrt{B}\eps^4n^2$ started at $(0,0)$.  We call these particular bubbles the \emph{slices} of the bottom exploration.  In particular the height of a slice evolves as a Galton--Watson process started from $\sqrt{B}\eps^4n^2$, and by the branching property of the skeleton the slices are i.i.d.  The height of the bottom exploration at any given time is the sum of the heights at that time of the slices added so far.  Afterwards, for each $j \in \N$, we let $H_j$ be the height of the exploration at time $T_{\max}+j\,\sqrt[4]{B}\,2^{-uk_{\max}}\eps^{2}n$ (where $T_{\max}:=\sqrt[4]{B}2^{(k_{\max}+1)/2}\eps^2 n$ is the right endpoint of the last of the time intervals above, which is of order $\eps n$) and continue it by adding the trees with height between $H_j$ and $H_j + \sqrt{B}\eps^4 n^2$. In other words, after time $T_{\max}$ we stop increasing the density at which the upward jumps are added to the height process. The spacing $\sqrt[4]{B}2^{-uk_{\max}}\eps^2 n$ after $T_{\max}$ agrees with the spacing just before it, and at every stage of the exploration the spacing is at least $\sqrt[4]{B}\eps^{2+2u}n$; the latter is the only property of it used below.

\begin{definition}
\label{def:bottom_good}
For $u, R, B, \eps> 0$ fixed as above, we call the bottom exploration \emph{good} if it satisfies the following conditions.
\begin{enumerate}[(i)]
\item\label{it:bottom_height_bound} Its height never drops below $R^3\eps^4 n^2$ and never goes above $\eps^2 n^2$, up to time $R \eps n$.
\item\label{it:bottom_distance_initial} The distance between $(0,0)$ and $(0, \sqrt{B}\eps^4 n^2)$ inside the bottom exploration is at most $\eps^{2-u} n$.
\end{enumerate}
\end{definition}

We will prove the following two statements in \Cref{subsec:bottom_exploration}.

\begin{proposition}
\label{prop:bottom_exploration_good}
Suppose that $u, R, B, \eps > 0$ are fixed as above, with $B \ge C_0R^6$ as in \Cref{fn:parameters}.  There exists $p_{\ref{prop:bottom_exploration_good}} \in (0,1)$ depending only on $u$ (but not on $R$, $B$, $\eps$ or $n$) so that, for all $\eps>0$ sufficiently small (depending on $u$, $R$ and $B$), the probability that the bottom exploration is good in the sense of \Cref{def:bottom_good} is at least~$p_{\ref{prop:bottom_exploration_good}}$.
\end{proposition}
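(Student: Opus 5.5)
The plan is to handle the two conditions of \Cref{def:bottom_good} separately. The upper bound on the height and condition~\eqref{it:bottom_distance_initial} will each hold with probability $1-o(1)$ as $\eps\to0$. The lower bound on the height will hold with probability bounded below by a constant $p_{\rm low}(u)>0$. We may then take $p_{\ref{prop:bottom_exploration_good}}=p_{\rm low}/2$ once $\eps$ is small.

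Throughout write $h=\sqrt B\eps^4n^2$ for the initial height of a slice, and $m=\sqrt h=B^{1/4}\eps^2n$ for its natural lifetime scale.

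\emph{Upper bound on the height.} Let $H_t$ be the height of the exploration at time $t$, and $N(t)$ the deterministic number of slices added up to time $t$. Each slice height is a mean-one Galton--Watson martingale, so $H_t-hN(t)$ is a martingale. Since $H_t\ge0$, Doob's inequality (as in \Cref{lem:tail_bound_maximum}) gives
\[\p\Big(\sup_{t\le R\eps n}H_t\ge \eps^2n^2\Big)\le \frac{hN(R\eps n)}{\eps^2n^2}.\]
The spacing is always at least $B^{1/4}\eps^{2+2u}n$, so $N(R\eps n)=O(R\eps^{-1-2u}B^{-1/4})$. Hence the right-hand side is $O(R B^{1/4}\eps^{1-2u})$, which tends to $0$ because $u<1/10$.

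\emph{Condition~\eqref{it:bottom_distance_initial}.} The initial slice is the bubble of initial height $m^2$ started at $(0,0)$, and it is contained in the exploration. Apply \Cref{lem:upper_bound_distances} with $r=m$ and $x=\lfloor \eps^{-u}/(3B^{1/4})\rfloor$. This gives a path inside that bubble of length less than $(2m+1)x\le\eps^{2-u}n$, with probability at least $1-\exp(-(x-1)^2/(2x))$, which tends to $1$.

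\emph{Lower bound on the height (the main step).} First choose $\zeta=c_{\ref{lem:dropping_below_initial}}/2$ and $p=c_{\ref{lem:dropping_below_initial}}/2$. We need $\zeta h\ge R^3\eps^4n^2$, which holds if $C_0\ge(2/c_{\ref{lem:dropping_below_initial}})^2$, since $B\ge C_0R^6$.

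By \Cref{lem:dropping_below_initial}, each slice independently has probability at least $p$ of staying above $\zeta h$ during the first $3m$ units of time after its insertion. Call such a slice \emph{robust}. Since heights are sums of nonnegative slice heights, it suffices that at every time $t\le R\eps n$ some robust slice was inserted in $[t-3m,t]$.

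Cover $[0,R\eps n]$ by intervals of length $m$. For $t$ up to $\sqrt2\,m$ the initial slice alone suffices, provided it is robust; this has probability at least $p$.

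In window $k$ the spacing is $2^{-uk}m$. So every interval $[t-3m,t]$ lying after the first window contains at least $c\,2^{uk}$ independently inserted slices. It therefore fails to contain a robust slice with probability at most $(1-p)^{c2^{uk}}$. Window $k$ contains $O(2^{k/2})$ checkpoints, so by a union bound the failure probability over windows $k\ge k_0$ is at most
\[\sum_{k\ge k_0}O(2^{k/2})(1-p)^{c2^{uk}}.\]
This is smaller than $p/4$ once $k_0=k_0(u)$ is large.

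For the finitely many windows $k<k_0$, we ask that the $O_{k_0}(1)$ slices lying on a fixed lattice of insertion times with spacing at most $m$ are all robust. These slices are independent, so this has probability at least $p^{C(k_0)}$.

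After $T_{\max}$ the spacing is frozen at $2^{-uk_{\max}}m\asymp\eps^{2u}m$. There are $O(R\eps^{-1})$ checkpoints, each failing with probability at most $(1-p)^{c\eps^{-2u}}$, so the total is $o(1)$.

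All of these events are increasing events about distinct independent slices, except that the window-$k\ge k_0$ event is simply intersected with the others via a union bound. Altogether the lower bound holds with probability at least $p^{C(k_0)+1}-p/4-o(1)=:p_{\rm low}(u)-o(1)$, which depends only on $u$.

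\emph{Expected obstacle.} The delicate point is the bookkeeping of the insertion schedule. One must check that the count of roughly $2^{uk}$ slices per lifetime window $3m$ makes the union bound summable uniformly in $\eps$, including near the endpoints of windows and the transition at $T_{\max}$. One must also check that the finitely many early windows cost only a constant depending on $u$, through $k_0$.
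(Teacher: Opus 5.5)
Your upper height bound is correct, and cleaner than the paper's: you apply Doob's inequality to the nonnegative submartingale $H_t$, a Galton--Watson process with deterministic immigration, whereas the paper counts slices with large maxima. Your treatment of condition~\eqref{it:bottom_distance_initial} is also correct. Your robust-slice covering for the stages $k\ge k_0$ is sound and differs from the paper: you use the roughly $2^{uk}$ independent slices inserted within one lifetime $3m$, instead of splitting a single slice into $\gamma^{-2}$ sub-forests via \Cref{lem:dropping_below_strong}.

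The gap is the final combination. You intersect the late event $E_{\mathrm{late}}$ (every checkpoint interval in stages $k\ge k_0$ contains a robust slice) with the early event by a union bound, and claim probability at least $p^{C(k_0)+1}-p/4-o(1)$. This quantity is negative. Indeed $p\le 1/2$, and your early lattice has spacing at most $m$ over a period of length about $2^{k_0/2}m$, so $C(k_0)\gtrsim 2^{k_0/2}$ and hence $p^{C(k_0)+1}<p/4$. No choice of $k_0$ and no sharper late bound can rescue the subtraction. A slice is robust with probability at most $\p_{m^2}(X_{3m}>0)\le 1/9$, so the early event has probability at most $9^{-C(k_0)}\le \exp(-c\,2^{k_0/2})$. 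On the other hand, $\p(E_{\mathrm{late}}^c)$ is at least the probability that one fixed checkpoint interval of stage $k_0$, containing only $O(2^{uk_0})$ slices, has no robust slice. That probability is at least $\exp(-C\,2^{uk_0})$. Since $u<1/2$, the error term dominates for all $k_0$ large enough for your late bound to be small.

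What is needed is independence rather than a union bound, which is how the paper combines its two cases. Its Case~2 events concern only slices added after time $\sqrt[4]{B}2^{k_0/2}\eps^2 n$, so the two probabilities multiply. In your setting there are two ways to do this. First, all your events are increasing functions of the i.i.d.\ robustness indicators of the slices, so Harris's inequality bounds the probability of their intersection below by $p^{C(k_0)+1}(1-p/4)$. Alternatively, extend the early lattice by $O(1)$ slices past time $2^{k_0/2}m$, and start the late checkpoints only where their intervals contain no early slices; the two events then depend on disjoint sets of independent slices. Only after this should you subtract the $o(1)$ terms (the stage after $T_{\max}$, the upper height bound and condition~\eqref{it:bottom_distance_initial}). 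That is legitimate because $p^{C(k_0)+1}(1-p/4)$ is a positive constant depending only on $u$.
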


We also need the following lemma which shows that at any \emph{fixed} time $\ell \ge \eps n$, the bottom exploration is unlikely to be much higher than $\eps^4n^2$, and its top and bottom at time $\ell$ are unlikely to be far apart.

\begin{lemma}
\label{lem:bottom_exploration_typical_height}
Suppose that $u, R, B, \eps > 0$ are fixed as above.  Fix $\ell \ge \eps n$ and let $h$ be the height of the bottom exploration at time $\ell$.  Then the following two conditions hold simultaneously with probability at least $1-O(\eps^u)$, where the implicit constant depends only on $u$ and $B$.
\begin{enumerate}[(i)]
\item\label{it:typical_height} We have $h \le \eps^{4-6u} n^2$.
\item\label{it:typical_path} For any height $0 \le h' \le h$ which is measurable with respect to $\CL_{[0,\ell]}$, there is a path of length at most $\eps^{2-4u} n$ connecting $(\ell,0)$ to $(\ell,h')$ which is entirely contained in the bottom exploration to the right of time $\ell$.
\end{enumerate}
\end{lemma}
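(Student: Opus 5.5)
The height $h$ at time $\ell$ is a sum over slices. Slices are i.i.d.\ Galton--Watson processes started from $N:=\sqrt{B}\eps^4n^2$, with a new slice added at spacing at least $\sqrt[4]{B}\eps^{2+2u}n$. So the number of slices started before time $\ell$ is at most $\ell/(\sqrt[4]{B}\eps^{2+2u}n)+1$ plus the initial one. A slice started at time $s$ contributes $X^{(s)}_{\ell-s}$ to $h$.

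For \eqref{it:typical_height}, I would not bound the expectation directly, since the martingale property gives $\E X_{\ell-s}=N$ and the number of slices grows with $\ell$. The idea is that a slice started at time $s\le\ell-\eps^{2-u}n$ (roughly) is unlikely to survive. By \eqref{eq:bubble_lifetime} and \eqref{eq:bernoulli}, a slice survives for time $t$ with probability at most $N/(t+1)^2=\sqrt{B}\eps^4n^2/(t+1)^2$. Summing over slice start times $s$ with spacing $\Delta:=\sqrt[4]{B}\eps^{2+2u}n$ and $t=\ell-s\ge \eps^{2-2u}n$ gives a bound of order
\[\sum_{t\ge \eps^{2-2u}n,\ t\in\Delta\Z}\frac{\sqrt B\eps^4n^2}{t^2}\lesssim \frac{\sqrt B\eps^4 n^2}{\Delta\,\eps^{2-2u}n}=O(\eps^{u'})\]
for a suitable small power (I may need to tune the threshold, e.g.\ $t\ge\eps^{2-3u}n$). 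Since $\ell\ge\eps n$, this uses that only slices of age at least about $\eps n$ include the initial one, which is harmless. Hence with probability $1-O(\eps^u)$, only slices started in $[\ell-\eps^{2-3u}n,\ell]$ are alive at $\ell$, and there are at most $\eps^{-5u}$ of them. Their contributions have expectation $N$ each, so by Markov the total is at most $\eps^{-5u}N\cdot\eps^{-u}\le \eps^{4-6u}n^2$ except with probability $\eps^u$. Here the $\sqrt B$ is absorbed in the $O$-constant.

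For \eqref{it:typical_path}, on the same event the population at time $\ell$ consists of at most $\eps^{4-6u}n^2$ individuals. The trees of the exploration above $(\ell,0)$ up to height $h$ then evolve, by the branching property conditional on $\CL_{[0,\ell]}$, as Galton--Watson trees. So I would mimic \Cref{lem:upper_bound_distances}, with $r:=\eps^{2-3u}n$ and $h'\le h\le\eps^{4-6u}n^2=r^2$. The number of the $h'$ trees that survive $r$ generations is dominated by $\Bin(r^2,1/r^2)$, hence is at most $x:=\eps^{-u}/3$ with probability $1-e^{-c\eps^{-u}}$. The going-right-and-back path then has length at most $(2r+1)x\le\eps^{2-4u}n$. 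This path stays between the bottom-most geodesics from $(\ell,0)$ and $(\ell,h')$, hence inside the bottom exploration to the right of $\ell$. Since $h'$ is measurable with respect to $\CL_{[0,\ell]}$, the trees to the right are independent of the choice of $h'$, and the argument applies uniformly. To be safe I would apply it with $h'$ replaced by $h$ itself: the at most $x$ survivors among all $h$ trees also control every subinterval $[0,h']$.

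The main obstacle is the bookkeeping of exponents in the first step, so that the surviving-slice count and the Markov bound both come out at $1-O(\eps^u)$. This uses the fact that the spacing is at least $\sqrt[4]{B}\eps^{2+2u}n$.
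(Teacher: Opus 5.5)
Your proposal is correct and follows the paper's proof essentially step by step. Slices started before roughly $\ell-\eps^{2-3u}n$ are killed by a union bound, using that a slice of age $t$ is alive with probability at most $\sqrt{B}\eps^4n^2/(t+1)^2$ together with the minimal spacing $\sqrt[4]{B}\eps^{2+2u}n$; the paper sums over dyadic blocks of ages rather than directly. The threshold must indeed be $\eps^{2-3u}n$: with $\eps^{2-2u}n$ your sum is only $O(B^{1/4})$. The at most $\eps^{-5u}+1$ younger slices are then handled by the martingale property and Markov's inequality. The path comes from the construction of \Cref{lem:upper_bound_distances}, applied conditionally on $\CL_{[0,\ell]}$. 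You use the fixed scale $r=\eps^{2-3u}n$ and all $h$ trees, where the paper uses the random scale $r=\sqrt{h'}$; this is a harmless variant, and it even gives the path bound simultaneously for every $h'\le h$.
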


Next we define the top exploration.  We start exploring the bottom-most geodesic in the LHPT to the right from the point $(0, B \eps^4 n^2)$ and then stop once this geodesic hits the bottom exploration.  The \emph{top exploration} is the part of the LHPT between the bottom exploration and this geodesic.  As in the case of the bottom exploration, the height process for the top exploration can be described explicitly.  For $t \ge 0$, let $Y_t$ be the height of this geodesic at time $t$ minus the height of the bottom exploration at time $t$; we call $(Y_t)_{t \ge 0}$ the \emph{height process of the top exploration}.  Then $Y_0 = (B-\sqrt{B})\eps^4 n^2$, and $(Y_t)$ evolves as a Galton--Watson process with offspring distribution $\theta$ and with a downward jump of size $\sqrt{B}\eps^4 n^2$ at each of the times at which a slice is added to the bottom exploration; that is, for each $1 \le k \le k_{\max}$ and each time of the form
\[\sqrt[4]{B}2^{k/2} \eps^2 n \le j \sqrt[4]{B}2^{-u k} \eps^2 n \le \sqrt[4]{B}2^{(k+1)/2} \eps^2 n,\]
as well as at each of the times $T_{\max}+j\,\sqrt[4]{B}\,2^{-uk_{\max}}\eps^{2}n$ for $j \in \N$. The process is stopped upon hitting a value $\le 0$; this happens precisely when the geodesic reaches a height below that of the bottom exploration for the first time. We call this time $\ell$ the \emph{merging time} of the two explorations, and the point $(\ell,h)$ on the top geodesic at this time is the \emph{merging vertex}.

\begin{definition}
\label{def:top_good}
For each $u, R, B, \eps>0$, we say that the top exploration is \emph{good} if it satisfies the following conditions.
\begin{enumerate}[(i)]
\item\label{it:initial_distance} The distance between $(0,B \eps^4 n^2)$ and $(0,\sqrt{B}\eps^4 n^2)$ in the top exploration is at most $\eps^{2-u} n$.
\item \label{it:top_survival} The top exploration merges with the bottom exploration between time $\eps n$ and $R \eps n$.
\item\label{it:max_height_top} The height process of the top exploration never goes above $R\eps^2 n^2/4$.
\item\label{it:middle_height} The height process of the top exploration at time $\eps n/2$ is at least $\eps^2 n^2/R$.
\end{enumerate}
\end{definition}

For the next definition, we additionally fix another small parameter $D>0$.

\begin{definition}
\label{def:bubble_good}
Suppose that we have parameters $u, R, D, B, \eps>0$ fixed as above, let $\ell$ be the merging time of the two explorations, and let $(\ell,h)$ be the merging vertex.  We say that the bubble is \emph{good} if the following conditions hold.
\begin{enumerate}[(i)]
\item\label{it:bottom_good} The bottom exploration is good in the sense of \Cref{def:bottom_good}.
\item\label{it:top_good} The top exploration is good in the sense of \Cref{def:top_good}.
\item\label{it:merge_distance} There is a path from $(\ell,h)$ to $(\ell,0)$ of length at most $\eps^{2-4 u} n$ which is entirely contained in the bottom exploration and to the right of time $\ell$.
\item \label{it:middle_graph_distance} Let $\Gamma$ be the top geodesic of the bubble and let $c_1$ be the vertex on $\Gamma$ at time $\epsilon n/2$ (see \Cref{fig:high distance point}).  The distance between $c_1$ and the bottom geodesic of the bubble is at least $\sqrt{D}\eps n$.
\end{enumerate}
\end{definition}

See \Cref{fig:good bubble} for an illustration of the anatomy of a good bubble.

\begin{figure}
\includegraphics[scale=0.8]{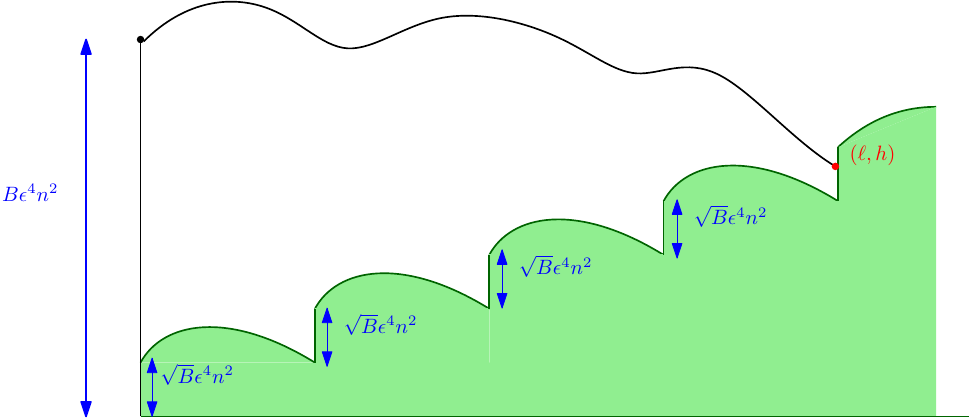}
\caption{The anatomy of a good bubble. The green region is the bottom exploration. The white region (i.e., the space between the top geodesic and the green region) is the top exploration. The merging time is $\ell$ and the merging vertex (in red) is $(\ell,h)$. Note that by definition, the merging vertex is always on the top geodesic of the bubble.}
\label{fig:good bubble}
\end{figure}

We note that condition~\eqref{it:bottom_good} is measurable with respect to the bottom exploration and that~\eqref{it:top_good} is measurable with respect to the top exploration.  Conditions~\eqref{it:merge_distance} and~\eqref{it:middle_graph_distance} depend on both the bottom and top explorations. Note that~\eqref{it:max_height_top} of \Cref{def:top_good} bounds the height process of the top exploration rather than the height of the bubble; on~\eqref{it:bottom_good} the bottom exploration has height at most $\eps^2n^2 \le R\eps^2n^2/4$, so the two together bound the height of the bubble by $R\eps^2n^2/2$, which is the form used in \Cref{sec:distortion LHPT}.

\begin{figure}
\includegraphics[scale=0.8]{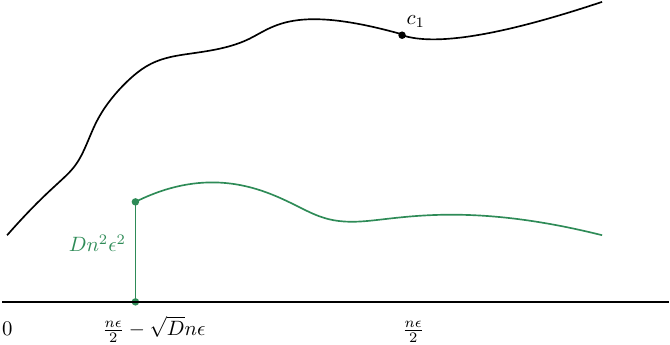}
\caption{Illustration of~\eqref{it:middle_graph_distance} from \Cref{def:bubble_good}.  The black curve is the top geodesic of the bubble and $c_1$ is its vertex at time $\eps n/2$; the condition requires that no path of length less than $\sqrt{D}\eps n$ started at $c_1$ hits the bottom geodesic of the bubble, and in particular that $d_{\mathrm{gr}}^{\CL}(c_1,(\eps n/2,0)) \ge \sqrt{D}\eps n$.  The green curve is the bottom-most geodesic started from $(\eps n/2-\sqrt{D}\eps n,\,D\eps^2n^2)$, illustrating the argument used to prove \Cref{lem:lower_bound_distances}: on~\eqref{it:middle_height} of \Cref{def:top_good}, any path of length less than $\sqrt{D}\eps n$ from $c_1$ to $(\eps n/2,0)$ would have to cross such a geodesic.}
\label{fig:high distance point}
\end{figure}

\begin{proposition}
\label{prop:good_bubble_probability}
Let $p$ be the probability that the bubble is good in the sense of \Cref{def:bubble_good}.  Then the parameters $u$, $R$, $D$ and $B$ appearing in \Cref{def:bottom_good,def:top_good,def:bubble_good} can be chosen (in the order given in \Cref{fn:parameters}) so that there is a constant $c_{\ref{prop:good_bubble_probability}} > 0$ such that for all $\eps > 0$ sufficiently small,
\[c_{\ref{prop:good_bubble_probability}} B\eps^2 \le p \le B\eps^2.\]
\end{proposition}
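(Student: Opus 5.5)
The upper bound is the cheap half; the lower bound needs the bottom/top independence plus a geometric separation argument.

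\emph{Upper bound.} A good bubble has a good top exploration, so by \eqref{it:top_survival} of \Cref{def:top_good} the merging time is at least $\eps n$. The top geodesic started at $(0,B\eps^4n^2)$ stays strictly above $\Z_{\ge0}\times\{0\}$ until merging, so the $B\eps^4n^2$ trees rooted between $(0,0)$ and $(0,B\eps^4n^2)$ must survive to generation roughly $\eps n$. By \eqref{eq:bubble_lifetime} and \eqref{eq:bernoulli} this has probability at most
\[1-\Bigl(1-\tfrac{1}{(\eps n+1)^2}\Bigr)^{B\eps^4n^2}\le \frac{B\eps^4n^2}{(\eps n)^2}=B\eps^2.\]
(If the merging time is exactly $\eps n$ one has to check an off-by-one, but survival to generation $\eps n-1$ still gives $B\eps^2(1+o(1))$, and the constant can be absorbed by adjusting how $\eps n$ is rounded.)

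\emph{Lower bound, independence and survival.} The bottom and top explorations are built from disjoint families of trees and slots, so they are independent. By \Cref{prop:bottom_exploration_good}, the bottom exploration is good with probability at least $p_{\ref{prop:bottom_exploration_good}}$, independently of the top exploration. It then suffices to show that, conditionally on any good bottom exploration, the remaining conditions hold with probability at least $cB\eps^2$. The core is the top height process $Y$: a Galton--Watson process started from $(B-\sqrt B)\eps^4n^2$, with downward jumps of total size at most $\eps^2n^2$ on \eqref{it:bottom_height_bound}. I would decompose $Y$ into its $(B-\sqrt B)\eps^4n^2$ individual trees. With probability about $B\eps^2$ exactly one tree survives to time $\eps n/2$, and that event is of order $B\eps^4n^2/(\eps n)^2$ by \eqref{eq:bubble_lifetime}. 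Conditioned on this, the surviving tree's population at time $\eps n/2$ is of order $\eps^2n^2$ with a law converging to a nondegenerate limit. Using \Cref{lem:tail_bound_maximum} and \Cref{lem:dropping_below_strong}, with probability bounded below it stays between $\eps^2n^2/R$ and $R\eps^2n^2/8$ on $[\eps n/2,R\eps n]$ and dies before $R\eps n$. This last point uses that the downward jumps sum to at most $\eps^2n^2\ll$ the population, while extinction of a population of order $\eps^2n^2$ occurs in time of order $\eps n$ with probability close to $1$ once $R$ is large. This yields \eqref{it:top_survival}--\eqref{it:middle_height}.

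\emph{Lower bound, distance conditions.} Condition \eqref{it:initial_distance} follows from \Cref{lem:upper_bound_distances} applied at scale $r=\sqrt B\eps^2 n$ with $x=\eps^{-u}$, together with its analogue for the bottom exploration. Condition \eqref{it:merge_distance} follows from \Cref{lem:bottom_exploration_typical_height}(ii), since the merging time $\ell\ge\eps n$ is a stopping time measurable with respect to $\CL_{[0,\ell]}$. Both fail with probability $O(\eps^u)$ or $e^{-c\eps^{-u}}$. These errors are only small compared to $1$, not to $B\eps^2$, so I would apply the lemmas conditionally on the survival event; they concern independent parts of the map or are conditional statements. For \eqref{it:middle_graph_distance} I would follow the separation argument of \Cref{lem:lower_bound_distances}, as in \Cref{fig:high distance point}: take the bottom-most geodesic from $(\eps n/2-\sqrt D\eps n,\,D\eps^2n^2)$. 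With high probability it stays below height $\eps^2n^2/R$ and above $0$ on the relevant window, once $D$ is small relative to $R$ and $K$. Any short path from $c_1$ to the bottom geodesic must cross it, which forces two boundary points at height separation $D\eps^2n^2$ to be at distance $O(\sqrt D\eps n)$, in contradiction with \Cref{lem:lower_bound_distances} applied with $r\asymp\sqrt D\eps n$.

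I expect the main obstacle to be this last step: applying \Cref{lem:lower_bound_distances}, which is stated conditionally on $\CL_{[0,t_0-3r]}$, on the rare survival event. This requires verifying that the event in the lemma depends on the strip after time $\eps n/2-3\sqrt D\eps n$, where conditioning on survival has become a bounded-density tilt.
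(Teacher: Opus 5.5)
Your upper bound is the same as the paper's. The off-by-one you worry about costs nothing: survival to generation $\eps n-1$ has probability exactly $1-(1-(\eps n)^{-2})^{B\eps^4n^2}\le B\eps^2$ by \eqref{eq:bubble_lifetime}.

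\textbf{The gap.} The lower bound is missing its central ingredient. You never show that the \emph{top exploration} survives past time $\eps n$ with probability at least $cB\eps^2$, i.e.\ that $\p(Y_{\eps n}>0)\ge cB\eps^2$.

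Decomposing $Y$ into the $(B-\sqrt B)\eps^4n^2$ original trees does not give this, because $Y$ is not their total population. At every slice time the bottom exploration takes over the bottom-most $\sqrt B\eps^4n^2$ individuals of the top region. These may be descendants of your single long-lived tree, and its later progeny then belongs to the bottom exploration. So ``exactly one original tree survives to $\eps n/2$'' does not imply $Y_{\eps n/2}>0$.

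Your reason for ignoring the jumps (they are $\ll$ a population of order $\eps^2n^2$) only covers times of order $\eps n$. The danger is at times between $B^{1/4}\eps^2n$ and $\sqrt B\eps^2n$. There the top population is only of order $B\eps^4n^2$, while jumps of size $\sqrt B\eps^4n^2$ arrive at an increasing rate. Any loss at that stage must be shown small relative to $B\eps^2$, not relative to $1$. Also, ``total size at most $\eps^2n^2$'' is not $\ll\eps^2n^2/R$; what is true is the deterministic bound $O(\eps^{3-2u}n^2)$ obtained by counting slices.

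Supplying this step is the content of \Cref{lem:top_exploration_survives_long}:
\begin{enumerate}
\item By \Cref{lem:tail_bound_minimum}, the whole bubble of initial height $B\eps^4n^2$ survives to time $2\eps n$ with probability at least $cB\eps^2$.
\item If the top exploration nevertheless merged before $\eps n$, then at the merging time every individual below the top geodesic lies in a slice. Hence some slice added before $\eps n$ survives to $2\eps n$.
\item This last event has probability at most $c'B\eps^2$ with $c'<c$. For stages $k<k_0$, a union bound costs $\sqrt B\,k_02^{k_0(u+1/2)+1}\eps^2$; this is where the constraint $B\ge C_0k_0^22^{k_0(1+2u)}$ enters.
\item For stages $k\ge k_0$, use the independence of two events: the bubble surviving to time $B^{1/4}2^{k/2}\eps^2n$ (probability at most $\sqrt B2^{-k}$), and a later fresh slice surviving to $2\eps n$ (probability at most $\sqrt B\eps^2$). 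This gives the summable bound $B\eps^2 2^{k(u-1/2)+1}$.
\end{enumerate}

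With this in hand you need no Yaglom-type limit. Conditioning on $\{Y_{\eps n/4}>0\}$, which has probability at most $16B\eps^2$, gives $\p(Y_{\eps n}>0\giv Y_{\eps n/4}>0)\ge c_{\ref{lem:top_exploration_survives_long}}/16$. The Markov property and domination then give $\p(Y_{\eps n}>0,\ Y_{\eps n/2}<\eps^2n^2/R\giv Y_{\eps n/4}>0)\le 4/R$.

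\textbf{Three smaller points.}

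First, condition \eqref{it:initial_distance}. A single application of \Cref{lem:upper_bound_distances} at scale $r\approx\sqrt B\eps^2n$ yields a path reaching times up to $r$. That is beyond the time $t_1\approx\sqrt2B^{1/4}\eps^2n$ at which the first slice is removed from the top region, so the path need not lie in the top exploration. In addition, with $x=\eps^{-u}$ the length bound is about $2\sqrt B\eps^{2-u}n$, which exceeds $\eps^{2-u}n$. Both are fixed by splitting the height range into $\lceil B\rceil$ pieces of height $\sigma^2$ with $\sigma\le\eps^2n<t_1$, and taking $x=\lceil\eps^{-u/2}\rceil$.

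Second, condition \eqref{it:merge_distance}. You cannot condition on ``any good bottom exploration'', because this condition is itself a property of the bottom exploration. Condition on the top exploration instead. Then ``bottom exploration good and \eqref{it:merge_distance} holds'' has conditional probability at least $p_{\ref{prop:bottom_exploration_good}}-O(\eps^u)$ on every top event with $\ell\ge\eps n$.

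Third, the obstacle you anticipate for \eqref{it:middle_graph_distance} disappears once you condition only on $\{Y_{\eps n/4}>0\}$. Take $r=\sqrt D\eps n$ and $D\le\min(1/144,1/(KR))$. On \eqref{it:middle_height} the point $c_1$ has height at least $\eps^2n^2/R\ge Kr^2$. So the failure of \eqref{it:middle_graph_distance} is contained in an event that is measurable with respect to $\CL_{[\eps n/4,\infty)}$ and, by \Cref{lem:lower_bound_distances}, has probability at most $\kappa$. That event is independent of $\CL_{[0,\eps n/4]}$, so no density-tilt argument is needed.
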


We prove \Cref{prop:good_bubble_probability} in \Cref{subsec:prob_bubble_good}.

The reason for setting things up in this manner is the following lemma.

\begin{lemma}[Conditional law of the exploration at the bottom of a good bubble]
\label{lem:conditional_prob_given_good}
Let $G$ be the event that the bubble starting at time $0$ is good in the sense of \Cref{def:bubble_good}.  For $\eps n \le \ell \le R\eps n$, let $G_\ell \subseteq G$ be the event that, in addition, the merging time of the two explorations equals $\ell$ and let $G_\ell^1$ be the event that the top exploration is good in the sense of \Cref{def:top_good} and that the merging time equals $\ell$.  Then for any positive probability event $A$ for the part of the LHPT between times $0$ and $R\eps n$ and with height between $0$ and $R^3\eps^4 n^2$ (i.e., any positive probability event which is measurable with respect to the subtriangulation of the LHPT formed by the vertices $(i,j)$ of $[0,R\eps n] \times [0,R^3\eps^4n^2]$, the slots they index and the Boltzmann triangulations filling them), we have
\begin{equation}
\label{eq:good_ell_resolved}
\p(A \cap G_{\ell}) \le \p(A)\,\p(G_{\ell}^1) \quad\text{for each }\ell
\end{equation}
and
\begin{equation}
\label{eq:good_top_decomposed}
\sum_{\eps n \le \ell \le R\eps n} \p(G_{\ell}^1) \le B\eps^2.
\end{equation}
\end{lemma}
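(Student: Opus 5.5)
The plan is to show that the top exploration is \emph{independent} of the bottom exploration, and that on $G_\ell$ the event $A$ is determined by the bottom exploration alone. Then \eqref{eq:good_ell_resolved} becomes a product of two probabilities, and \eqref{eq:good_top_decomposed} reduces to the survival estimate \eqref{eq:bubble_lifetime}.

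\emph{Step 1: independence of the two explorations.} I would build the skeleton generation by generation, with the individuals of each generation ordered by height. Fix a generation $t$. The bottom exploration consists of the lowest $H(t)$ individuals, where $H(t)$ is its height. The top exploration consists of the individuals strictly between $H(t)$ and the top geodesic; there are $Y_t$ of them. At each of the (deterministic) slice times, exactly $\sqrt{B}\eps^4n^2$ of the lowest top individuals are transferred to the bottom exploration. From then on, their descendants belong to the bottom exploration.

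By the branching property, the offspring of distinct individuals, and the Boltzmann triangulations filling their slots, are independent. The number of individuals transferred at each slice time is deterministic. Hence the following two families are independent:
\begin{itemize}
\item the offspring of the individuals that remain in the top exploration;
\item the offspring of the transferred slice individuals, together with the descendants of the initial $\sqrt{B}\eps^4n^2$ trees.
\end{itemize}
Consequently the top exploration is independent of the bottom exploration. This covers its geodesic, its fillings and its height process $(Y_t)$, stopped at the merging time, which is a function of $(Y_t)$ alone. This is exactly the description given before \Cref{def:top_good}: $Y$ is a Galton--Watson process with deterministic downward jumps. In the same way the slices are i.i.d.\ and independent of $Y$. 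I expect this step to be the main place needing care, since the slices are carved out of individuals that "belonged" to the top region. I would make it rigorous by an explicit construction from an i.i.d.\ family of offspring variables and Boltzmann fillings indexed by individuals.

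\emph{Step 2: the region of $A$ lies inside the bottom exploration on $G_\ell$.} Let $E$ be the event that the bottom exploration height stays at least $R^3\eps^4n^2$ up to time $R\eps n$. By \eqref{it:bottom_height_bound} of \Cref{def:bottom_good}, $G_\ell\subseteq E$.

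The bottom exploration is bounded below by the bottom geodesic at height $0$ and occupies the heights $[0,H(t)]$ at each time $t$. So on $E$, every vertex of $[0,R\eps n]\times[0,R^3\eps^4n^2]$, every slot it indexes, and every Boltzmann filling of such a slot belongs to the bottom exploration. At time $0$ this uses $\sqrt{B}\ge R^3$, which follows from $B\ge C_0R^6$. Hence there is an event $\tilde A$, measurable with respect to the bottom exploration, such that $A\cap E=\tilde A\cap E$.

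Since $G_\ell\subseteq E\cap G_\ell^1$, we get
\[
\p(A\cap G_\ell)\le \p(A\cap E\cap G_\ell^1)=\p\big((\tilde A\cap E)\cap G_\ell^1\big).
\]
Here $E$ is also bottom-measurable and $G_\ell^1$ is top-measurable, so by Step 1 the right-hand side equals
\[
\p(\tilde A\cap E)\,\p(G_\ell^1)=\p(A\cap E)\,\p(G_\ell^1)\le \p(A)\,\p(G_\ell^1),
\]
which is \eqref{eq:good_ell_resolved}.

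\emph{Step 3: the sum \eqref{eq:good_top_decomposed}.} The events $G_\ell^1$ are disjoint in $\ell$. Their union is contained in the event that the merging time is at least $\eps n$.

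Before merging, $Y_t>0$, so the top geodesic lies strictly above height $0$. This means the forest of $B\eps^4n^2$ skeleton trees rooted between $(0,0)$ and $(0,B\eps^4n^2)$ survives to generation $\eps n-1$. By \eqref{eq:bubble_lifetime} and the left inequality of \eqref{eq:bernoulli}, this has probability at most
\[
\frac{B\eps^4n^2}{(\eps n)^2}=B\eps^2
\]
(with the usual harmless rounding), which gives \eqref{eq:good_top_decomposed}.
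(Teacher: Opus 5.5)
Your proposal is correct and follows the paper's proof essentially step for step. The paper uses the same ingredients: $G_\ell\subseteq G_\ell^1\cap G^2$ (with $G^2$ the full bottom-goodness event where you use only its height-lower-bound part $E$), independence of the top and bottom explorations, the observation that on the height lower bound the region carrying $A$ lies inside the bottom exploration, and the survival bound $\p_{B\eps^4n^2}(X_{\eps n-1}>0)\le B\eps^2$ for the sum. Your individual-by-individual justification of that independence spells out a point the paper only asserts.
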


Informally, the first bound says that an event $A$ concerning the LHPT near the bottom of the bubble is independent of the bubble being good, up to replacing $G_{\ell}$ by the slightly larger event $G_{\ell}^1$, which involves the top exploration alone; the second bounds the total mass of the enlarged events by $B\eps^2$, which by \Cref{prop:good_bubble_probability} is the order of $\p(G)$ itself, so that the enlargement is harmless.  We resolve the merging time because the events to which the lemma is applied in \Cref{sec:distortion LHPT} themselves depend on $\ell$, so that the conditioning has to be performed at a fixed value of~$\ell$.
\begin{proof}
\noindent\steplabel{1}{step:condgood_1}\emph{Step 1. Two events and their independence.} Let $G^2$ be the event that the bottom exploration is good in the sense of \Cref{def:bottom_good}.  Then, for each $\ell$,
\[G_{\ell} \subseteq G_{\ell}^1 \cap G^2,\]
since $G_\ell$ additionally requires conditions~\eqref{it:merge_distance} and~\eqref{it:middle_graph_distance}.  Moreover $G_{\ell}^1$ is measurable with respect to the top exploration (its conditions, together with the merging time, are functions of the height process $(Y_t)$ alone, except for~\eqref{it:initial_distance}, which involves the triangulation filling the top exploration), whereas $G^2$ is measurable with respect to the bottom exploration; hence $G_{\ell}^1$ and $G^2$ are independent.

\noindent\steplabel{2}{step:condgood_2}\emph{Step 2. Proof of~\eqref{eq:good_ell_resolved}.}  We note that the event $A \cap G^2$ is measurable with respect to the bottom exploration (by~\eqref{it:bottom_height_bound} the height of the bottom exploration never drops below $R^3\eps^4n^2$ up to time $R\eps n$ on $G^2$, so that the restriction of the LHPT to $[0,R\eps n] \times [0,R^3\eps^4n^2]$ lies inside the bottom exploration).  Since $G_{\ell}^1$ is independent of the bottom exploration, we thus have that
\[ \p(G_{\ell}\cap A) \le \p(G_{\ell}^1 \cap G^2 \cap A) = \p(G_{\ell}^1) \p(G^2 \cap A) \le \p(G_{\ell}^1) \p(A),\]
which is~\eqref{eq:good_ell_resolved}.

\noindent\steplabel{3}{step:condgood_3}\emph{Step 3. Proof of~\eqref{eq:good_top_decomposed}.}  The events $G^1_\ell$ are pairwise disjoint and their union over $\eps n \le \ell \le R\eps n$ is the event $G^1$ that the top exploration is good (recall from~\eqref{it:top_survival} that a good top exploration merges at a time in $[\eps n, R\eps n]$).  On $G^1$ we have $Y_t>0$ for every $t<\eps n$; since $(Y_t)$ is stochastically dominated by a Galton--Watson process started from $B\eps^4n^2$ (its initial height is $(B-\sqrt{B})\eps^4n^2 \le B\eps^4n^2$, and its downward jumps only decrease it),
\[\sum_{\ell} \p(G^1_\ell)=\p(G^1) \le \p_{B\eps^4n^2}(X_{\eps n-1}>0)=1-\left(1-\frac{1}{(\eps n)^2}\right)^{B\eps^4n^2} \le B\eps^2.\]
\end{proof}

\subsection{Bounds for the bottom exploration}
\label{subsec:bottom_exploration}

We now give the proof of \Cref{prop:bottom_exploration_good}, using \Cref{lem:dropping_below_initial,lem:dropping_below_strong} to control the probability that the bottom exploration drops in height too quickly.
\begin{proof}[Proof of \Cref{prop:bottom_exploration_good}]
Suppose that we have fixed parameters $u, R, B, \eps > 0$ as in the statement of the proposition.

\noindent\steplabel{1}{step:botexp_1}\emph{Step 1. The bottom exploration does not go too low before time $T_{\max}$.} Let us first show that the bottom exploration has a positive chance of never going too low.  For $1 \le k \le k_{\max}$, consider the time interval between $\sqrt[4]{B}2^{k/2} \eps^2 n$ and $\sqrt[4]{B}2^{(k+1)/2} \eps^2 n$.  Then by the definition of the bottom exploration, we are adding in upward jumps of size $\sqrt{B}\eps^4 n^2$ every $\sqrt[4]{B}2^{-u k} \eps^2 n$ units of time during this interval.  In particular the time between consecutive slices is at most $(\sqrt{2}+1)\sqrt[4]{B}\eps^2 n \le 3\sqrt[4]{B}\eps^2 n$ for the initial slice (added at time $0$, the next one at the first multiple of $\sqrt[4]{B}2^{-u}\eps^2 n$ which is at least $\sqrt[4]{B}2^{1/2}\eps^2 n$), and at most twice the spacing $\sqrt[4]{B}2^{-uk}\eps^2 n$ for a slice added in the $k$-th interval, the factor of two accounting for the change of spacing at the endpoints. Let $k_0$ be a large positive integer. We break this into two cases depending on whether $k$ is larger or smaller than $k_0$.

\noindent\emph{Case 1. The early stages $k<k_0$.} Since the height of the bottom exploration at any time is at least the current height of the most recently added slice, it suffices to bound from below the probability that every slice individually stays above $R^3\eps^4n^2$ until the next slice is added.  By \Cref{lem:dropping_below_initial}, applied with $m=\sqrt[4]{B}\eps^2n$ (so that $m^2=\sqrt{B}\eps^4n^2$ is the initial height of a slice, and $3m$ bounds the time until the next slice is added, by the gap bounds noted above) and with $\zeta = R^3/\sqrt{B}$, each slice has probability at least $p_0$ of not dropping below $R^3\eps^4 n^2$ before the next slice is added, where $p_0 = c_{\ref{lem:dropping_below_initial}}-R^3/\sqrt{B}$ is some positive constant (provided that we assume $B$ is much larger than $R$; precisely, $B \ge C_0R^6$ gives $R^3/\sqrt{B} \le C_0^{-1/2} \le c_{\ref{lem:dropping_below_initial}}/2$ for $C_0$ large, whence the universal lower bound $p_0 \ge c_{\ref{lem:dropping_below_initial}}/2$). For each $k$, there are
\[2^{k(u+1/2)+1/2}-2^{k(u+1/2)}+1 \le 2^{k(u+1/2)+1}\]
slices added in the time interval between $\sqrt[4]{B}2^{k/2} \eps^2 n$ and $\sqrt[4]{B}2^{(k+1)/2} \eps^2 n$. So in total, up to time $\sqrt[4]{B}2^{k_0/2} \eps^2 n$, there are at most $k_0 2^{k_0(u+1/2)+1}$ slices added, including the initial slice. The slices are all independent, and so with probability at least
\[p_0^{k_0 2^{k_0(u+1/2)+1}}>0,\]
the bottom exploration never drops below $R^3\eps^4 n^2$ before time $\sqrt[4]{B}2^{k_0/2} \eps^2 n$.

\noindent\emph{Case 2. The later stages $k \ge k_0$.} \Cref{lem:dropping_below_strong} with $m = \sqrt[4]{B}\eps^2 n$ and $\gamma = 2^{1-u k}$ implies that the probability that a slice in the LHPT with initial height $m^2 = \sqrt{B}\eps^4 n^2$ drops below height $R^3\eps^4 n^2$ before the next slice is added (which by the gap bounds above happens within $2 \cdot \sqrt[4]{B}2^{-u k} \eps^2 n = \gamma m$ units of time) is at most $\exp(-c_{\ref{lem:dropping_below_strong}} 2^{2u k-2})$, the requirement $B \ge C_0R^6$ guaranteeing that the floor $c_{\ref{lem:dropping_below_strong}}\sqrt{B}\eps^4n^2$ provided by \Cref{lem:dropping_below_strong} is at least $R^3\eps^4n^2$. Taking a union bound over $k_0 \le k \le k_{\max}$ and $j \in \N$ such that
\[\sqrt[4]{B}2^{k/2} \eps^2 n \le \sqrt[4]{B}2^{-uk} j \eps^2 n \le \sqrt[4]{B}2^{(k+1)/2} \eps^2 n,\]
 we see that the probability that the bottom exploration does not drop below $R^3\eps^4 n^2$ at any time between $\sqrt[4]{B}2^{k_0/2}\eps^2 n$ and $T_{\max}$ is at least
\[1-\sum_{k=k_0}^{k_{\max}} \exp(-c_{\ref{lem:dropping_below_strong}} 2^{2u k-2})\cdot 2^{k(u+1/2)+1} \ge 1-\sum_{k=k_0}^{\infty} \exp(-c_{\ref{lem:dropping_below_strong}} 2^{2u k-2})\cdot 2^{k(u+1/2)+1}.\]
For a given $u>0$, we can pick $k_0$ sufficiently large (in particular $k_0 \ge 2/u$, so that $\gamma = 2^{1-uk}<1$ throughout Case~2) so that this last expression is positive.

Finally, we combine these two cases using the independence of slices. The probability that the bottom exploration never drops below $R^3\eps^4 n^2$ before time $T_{\max}$ is at least
\[p_0^{k_0 2^{k_0(u+1/2)+1}} \left(1-\sum_{k=k_0}^{\infty} \exp(-c_{\ref{lem:dropping_below_strong}} 2^{2u k-2})\cdot 2^{k(u+1/2)+1}\right),\]
which as already discussed is positive if $k_0$ (as a function of $u$) is large enough. Since $p_0 \ge c_{\ref{lem:dropping_below_initial}}/2$ and $c_{\ref{lem:dropping_below_strong}}$ are universal constants, this bound depends on $u$ (through $k_0$) but not on $R$, $B$, $\eps$, or $n$.

\noindent\steplabel{2}{step:botexp_2}\emph{Step 2. The bottom exploration does not go too low between times $T_{\max}$ and $R\eps n$.} It remains to control the exploration on $[T_{\max}, R\eps n]$, where the slices are added at the constant spacing $\sqrt[4]{B}2^{-uk_{\max}}\eps^2 n$.  Case~2 applies verbatim with $\gamma = 2^{1-uk_{\max}} \le 4B^{u/4}\eps^{2u}$: each of the $O(R\eps^{-1-2u})$ slices on this interval, and the last one added before $T_{\max}$, stays above $c_{\ref{lem:dropping_below_strong}}\sqrt{B}\eps^4n^2 \ge R^3\eps^4n^2$ until the next is added, except with probability at most $\exp(-c\,\eps^{-4u})$ for a constant $c>0$ depending on $u$ and $B$.  A union bound therefore bounds the probability of dropping below $R^3\eps^4n^2$ somewhere in $[T_{\max}, R\eps n]$ by $O(R\eps^{-1-2u}\exp(-c\eps^{-4u})) \to 0$ as $\eps \to 0$.  Combining with the two cases above, the probability $q$ that the height never drops below $R^3\eps^4n^2$ up to time $R\eps n$ is bounded below by a positive constant depending only on $u$, for all $\eps$ small.  (That $k_0$ depends on $u$ alone is consistent with $B \ge C_0k_0^2 2^{k_0(1+2u)}$, since $B$ is chosen after $k_0$; see \Cref{fn:parameters}.)

\noindent\steplabel{3}{step:botexp_3}\emph{Step 3. The bottom exploration does not go too high.} Let us now show that the bottom exploration is unlikely to get too high.  First of all, the number of slices that we add to the bottom exploration over its course (that is, up to time $R\eps n$) is at most a constant (depending on $R$ and $B$) times $\sum_{k=1}^{k_{\max}} 2^{u k + k/2}$, which is of order $\eps^{-1- 2u}$: the sum counts the slices added before $T_{\max}$, and those added between $T_{\max}$ and $R\eps n$ number at most $(R\eps n)/(\sqrt[4]{B}\eps^{2+2u}n) = O(R\eps^{-1-2u})$, of the same order.   By \Cref{lem:tail_bound_maximum}, applied with $y = \sqrt{B}\eps^4n^2$ and $x = \eps^{3-3u}n^2$ (its hypothesis $x \ge y$ amounts to $\eps^{-1-3u} \ge \sqrt{B}$, valid for $\eps$ small), each slice has probability at most $\sqrt{B}\eps^{1+3u}$ of reaching a maximal height of at least $\eps^{3-3u}n^2$. So by a union bound, the probability that some slice hits $\eps^{3-3u}n^2$ is $O(\eps^u)$. 

Also, \Cref{lem:tail_bound_maximum} (this time with $x=\eps^{3+3u}n^2$, for which $x \ge y$ amounts to $\eps^{-1+3u} \ge \sqrt{B}$, again valid for $\eps$ small since $u<1/10$) implies that with probability at most $\sqrt{B}\eps^{1-3u}$, a slice has a maximal height of at least $\eps^{3+3u}n^2$.  Since the slices which make up the bottom exploration are i.i.d., the number of slices which have maximal height at least $\eps^{3+3u}n^2$ is stochastically dominated by $Z$, where $Z \sim \Bin(C\eps^{-1-2u},\sqrt{B}\eps^{1-3u})$ ($C$ is some positive constant). By Markov's inequality, it follows that the probability that there are more than $\eps^{-6u}$ slices that have a maximal height of at least $\eps^{3+3u}n^2$ is $O(\eps^u)$.

To summarize,
\begin{itemize}
\item There are $O(\eps^{-1-2u})$ slices added in total.
\item With probability $1-O(\eps^u)$, none of the slices has a maximal height greater than $\eps^{3-3u}n^2$.
\item With probability $1-O(\eps^u)$, there are fewer than $\eps^{-6u}$ slices that have a maximal height greater than $\eps^{3+3u}n^2$.
\end{itemize}

Thus with probability $1-O(\eps^u)$, the total maximal height of all these slices combined (i.e., if their maxima were all attained at the same time) is at most
\[C\eps^{-1-2u} \cdot \eps^{3+3u} n^2+\eps^{-6u} \cdot \eps^{3-3u} n^2=C\eps^{2+u}n^2+\eps^{3-9u}n^2,\]
which is of order $\eps^{2+u} n^2$ if $u>0$ is small enough (namely $u<1/10$, so that $3-9u \ge 2+u$).  Since the height of the bottom exploration is at all times at most the sum of the maximal heights of its slices, it follows that with probability $1-O(\eps^u)$ the height never exceeds $\eps^2 n^2$ up to time $R\eps n$, for all $\eps$ small.

\noindent\steplabel{4}{step:botexp_4}\emph{Step 4. The distance condition~\eqref{it:bottom_distance_initial}.} We verify condition~\eqref{it:bottom_distance_initial} of \Cref{def:bottom_good}, namely that $(0,0)$ and $(0,\sqrt{B}\eps^4n^2)$ are at distance at most $\eps^{2-u}n$ inside the bottom exploration.  Apply \Cref{lem:upper_bound_distances} with $r = \sqrt[4]{B}\eps^2 n$, so that $r^2$ is the height difference between these two points, and with $x = \lceil \eps^{-u}/(4\sqrt[4]{B})\rceil$: with probability at least $1-\exp(-c_B\eps^{-u})$, where $c_B>0$ depends on $B$, this produces a path between them of length less than $(2r+1)x \le \eps^{2-u}n$ (for $\eps$ small), contained in the bubble between the bottom-most geodesics started from them.

\noindent\steplabel{5}{step:botexp_5}\emph{Step 5. Conclusion.} Combining the four preceding steps, the probability that the bottom exploration is good is at least
\[q - O(\eps^u) - \exp(-c_B\eps^{-u}) \ge \frac{q}{2} =: p_{\ref{prop:bottom_exploration_good}}\]
for all $\eps$ sufficiently small, where $q$ is the probability from \Cref{step:botexp_1,step:botexp_2}, bounded below by a positive constant depending only on $u$.  This proves the proposition.
\end{proof}

We also now give the proof of \Cref{lem:bottom_exploration_typical_height}.

\begin{proof}[Proof of \Cref{lem:bottom_exploration_typical_height}]
Fix a value of $\ell$ as above.

\noindent\steplabel{1}{step:typheight_1}\emph{Step 1. The slices added long before time $\ell$ have died out.} For each integer $j \ge 0$, consider the time interval from $\ell-\sqrt[4]{B}2^{(j+1)/2}\eps^2 n$ to $\ell-\sqrt[4]{B}2^{j/2}\eps^2 n$. Since the highest density of slices being added to the bottom exploration is one every $\sqrt[4]{B}\eps^{2+2u}n$ units of time, this interval contains at most $2^{j/2} \eps^{-2u}$ slices for $\eps$ small. For each of these slices, the probability that it survives until time $\ell$ is at most
\[1-\left(1-\frac{1}{(\sqrt[4]{B}2^{j/2}\eps^2n+1)^{2}}\right)^{\sqrt{B}\eps^4 n^2}\le \frac{1}{2^j}.\]
So by a union bound, the probability that any survive in this interval is at most $2^{-j/2} \eps^{-2u}$. Then taking a union bound over $j \ge 2\log_2 \eps^{-3u}$, the probability that any slice between time 0 and $\ell-\sqrt[4]{B}\eps^{2-3u}n$ survives until time $\ell$ is at most
\[\sum_{j=2\log_2 \eps^{-3u}}^{\infty} \frac{\eps^{-2u}}{2^{j/2}}=O(\eps^u).\]

\noindent\steplabel{2}{step:typheight_2}\emph{Step 2. The slices added shortly before time $\ell$ are not too high.} Between times $\ell-\sqrt[4]{B}\eps^{2-3u}n$ and $\ell$, there are at most $\eps^{-5u}+1$ slices added to the bottom exploration. Since the height processes of the slices are martingales, their expected height at any time is the same as their initial height, namely $\sqrt{B}\eps^4 n^2$. Thus the expected total height of the slices in this interval at time $\ell$ is at most $\sqrt{B}\eps^{4-5u}n^2$. By Markov's inequality, it follows that with probability $1-O(\eps^u)$, those slices will not have a total height of more than $\eps^{4-6u}n^2$.  With \Cref{step:typheight_1} this gives assertion~\eqref{it:typical_height}: the height $h$ of the bottom exploration at time $\ell$ satisfies $h \le \eps^{4-6u}n^2$ with probability $1-O(\eps^u)$.

\noindent\steplabel{3}{step:typheight_3}\emph{Step 3. The connecting path.} The point $(\ell,h)$ is the vertex at the top of the bottom exploration at time $\ell$, and $(\ell,h')$ is in between it and $(\ell,0)$, on the same vertical line. We now join these last two points.  Apply \Cref{lem:upper_bound_distances} with $\sqrt{h'}$ in place of $r$ and $x=\eps^{-u}/3$.  Since $r=\sqrt{h'}$ is random, we apply it conditionally on the part of the LHPT to the left of time $\ell$.  We obtain a path from $(\ell,0)$ to $(\ell,r^2)=(\ell,h')$ contained in the bubble between these two points, hence in the bottom exploration and to the right of time $\ell$, of length less than $(2r+1)(\eps^{-u}/3+1)$.  On the event $h \le \eps^{4-6u}n^2$ we have $r \le \eps^{2-3u}n+1$, so that this length is at most
\[\frac{2r+1}{3}\eps^{-u}+(2r+1) \le \frac{2}{3}\eps^{2-4u}n+2\eps^{2-3u}n+\eps^{-u}+3<\eps^{2-4u}n\]
for all $\eps$ sufficiently small, since $\eps^{2-3u}=\eps^u\cdot\eps^{2-4u}$, $n \ge \eps^{-4}$ (see \Cref{fn:parameters}) and $u<1/10$.  The conditional probability that no such path is produced is at most $\exp(-c\eps^{-u})$ for a constant $c>0$, uniformly over the conditioning, since $r$ enters this bound only through the choice of $x$.  Combining with \Cref{step:typheight_1,step:typheight_2} proves the lemma.
\end{proof}

\subsection{Probability of a bubble being good}
\label{subsec:prob_bubble_good}

We will break the proof of \Cref{prop:good_bubble_probability} into the upper bound and lower bound.  As we will see just below, the proof of the upper bound is straightforward but the proof of the lower bound will require more work.

\begin{proof}[Proof of \Cref{prop:good_bubble_probability}, upper bound]  If the bubble is good, then by~\eqref{it:top_survival} the merging time is at least $\eps n$, so that $Y_j>0$ for all $j < \eps n$.  Moreover, the height process of the top exploration is stochastically dominated by a standard Galton--Watson process starting from height $B\eps^4 n^2$ (its initial height is $(B-\sqrt{B})\eps^4n^2 \le B\eps^4n^2$, and its downward jumps only decrease it).  Therefore,
\[p \le \p_{B\eps^4n^2}(X_{\eps n-1}>0)=1-\left(1-\frac{1}{(\eps n)^2}\right)^{B\eps^4n^2} \le \frac{B\eps^4n^2}{(\eps n)^2} = B\eps^2.\]
\end{proof}

For the lower bound, we will first need the following lemma.

\begin{lemma}
\label{lem:top_exploration_survives_long}
There exists a universal constant $c_{\ref{lem:top_exploration_survives_long}}>0$ so that, provided the parameters are chosen as in \Cref{fn:parameters} and $\eps$ is sufficiently small, the top exploration has probability at least $c_{\ref{lem:top_exploration_survives_long}}B\eps^2$ of merging with the bottom exploration after time $\eps n$.
\end{lemma}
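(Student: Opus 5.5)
We want a lower bound of order $B\eps^2$ on the probability that $Y_t>0$ for all $t<\eps n$. Here $Y_0=(B-\sqrt{B})\eps^4n^2$, $Y$ is a $\theta$-Galton--Watson process, and $Y$ is hit by downward jumps of size $\sqrt{B}\eps^4n^2$ at the slice times of the bottom exploration. The plan is to decompose $Y$ into the $(B-\sqrt B)\eps^4n^2$ independent trees rooted between heights $\sqrt B\eps^4n^2$ and $B\eps^4n^2$. We then look for the event that exactly one (or at least one) of these trees survives to a time of order $\eps n$ and carries a population large enough to absorb all the downward jumps.

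\emph{Step 1 (counting the jumps).} First we bound the total downward drift up to time $\eps n$. Up to that time, the number of slices added is at most
\[\sum_{k:\,\sqrt[4]{B}2^{k/2}\eps^2n\le \eps n} 2^{k(u+1/2)+1}.\]
Hence the total removed height up to time $t$, written $J(t)$, satisfies
\[J(t)\lesssim \sqrt B\eps^4n^2\,(t/(\sqrt[4]B\eps^2n))^{1+2u}.\]
In particular $J(t)\ll t^2$ as long as $t\ge \sqrt[4]{B}2^{k_0/2}\eps^2n$ and $B$ is large; this is where the constraint $B\ge C_0k_0^22^{k_0(1+2u)}$ should enter. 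In the early window $t\le \sqrt[4]B 2^{k_0/2}\eps^2n$ only $O(k_02^{k_0(u+1/2)})$ jumps occur, so the total removed there is at most $C k_0 2^{k_0(u+1/2)}\sqrt B\eps^4n^2$. This is a small fraction of $Y_0\approx B\eps^4 n^2$ once $B\ge C_0k_0^2 2^{k_0(1+2u)}$.

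\emph{Step 2 (one large tree).} Next, each of the $\asymp B\eps^4n^2$ trees survives to time $\eps n$ with probability $\asymp(\eps n)^{-2}$. Hence, by~\eqref{eq:bubble_lifetime} and~\eqref{eq:bernoulli}, the probability that at least one survives is $\asymp B\eps^2$; this is the right order. The work is to show that, conditionally on some tree surviving to time $\eps n$, with probability bounded below the process stays above $J(t)$ throughout $[0,\eps n]$. I would use a size-biased / Kolmogorov–Yaglom-type description. Conditioned on survival to generation $N=\eps n$, a single $\theta$-tree has population at time $t\le N$ that is typically of order $t^2$; this is the $3/2$-stable scaling, consistent with $\p_1(X_t>0)=(t+1)^{-2}$. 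Moreover, with conditional probability bounded below, its population at time $t$ is at least $c t^2$ for all $t\in[t_1,N]$, where $t_1=\sqrt[4]B 2^{k_0/2}\eps^2n$. To make this concrete I would reuse \Cref{lem:tail_bound_minimum} and \Cref{lem:dropping_below_strong} on dyadic time blocks $[2^{i}t_1,2^{i+1}t_1]$. Conditionally on the tree having population $\ge c(2^it_1)^2$ at time $2^it_1$, \Cref{lem:dropping_below_strong} gives that it stays above $c'(2^it_1)^2$ over the block except with probability $e^{-c''}$. The required condition that the tree keeps doubling its scale is summable because $J(t)/t^2\to0$ polynomially (like $t^{-1+2u}$) in the scale variable. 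On $[0,t_1]$ the remaining $\asymp B\eps^4n^2$ trees alone keep $Y$ above the early jumps: by \Cref{lem:dropping_below_initial}, applied with $m^2=(B-\sqrt B)\eps^4n^2$ so that $3m\gtrsim t_1$ when $B$ is large, they stay above a constant fraction of $Y_0$ with positive probability. Since these trees are independent of the surviving one, the two events can be combined multiplicatively.

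\emph{Main obstacle.} The delicate step is the conditioned lower bound: showing that, given survival to time $\eps n$, the surviving population dominates $t^2$ times a constant uniformly in $t$, with only a constant-probability loss. Without care, the conditioning on survival and the "stay large" requirement interact badly. I would first try to avoid explicit conditioning. Partition $[t_1,\eps n]$ dyadically, and at each scale $s$ require the event "the forest population at time $s$ is $\ge c s^2$". Using \Cref{lem:tail_bound_minimum} with $x\asymp s^2$, this propagates to scale $2s$ with probability $\ge 1-\delta_s$, where $\delta_s$ is small. Seeding at scale $t_1$ uses the unconditioned bound $\p(\text{population}\ge c t_1^2\text{ at }t_1)\gtrsim B\eps^4n^2/t_1^2$. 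Obtaining the final factor $B\eps^2$ rather than $B\eps^4n^2/t_1^2$ then requires the product of the propagation probabilities, each only a constant below one, to be offset by the ratio $(t_1/\eps n)^2$. I expect this bookkeeping, namely ensuring that the per-scale success probabilities telescope to $(s/2s)^2$ each, to be the crux.
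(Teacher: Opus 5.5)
Your proposal has a genuine gap, and it sits exactly at the step you call the crux. The tools you suggest for that step cannot close it.

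First, $Y$ does not have to beat the deterministic count $J(t)$. By definition, $Y_t$ is the height $X_t$ of the whole bubble of initial height $B\eps^4n^2$ minus the height of the bottom exploration at time $t$. That second term is the total time-$t$ population of all slices added so far. It is random, and conditioning on survival to $\eps n$ biases it upward, because a long-lived slice is itself one way for the bubble to survive. Treating the removals as a fixed drift $J(t)$ ignores exactly this.

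Second, the dyadic propagation fails quantitatively in both regimes:
\begin{itemize}
\item If the population at time $s$ is $cs^2$ with $c$ small, \Cref{lem:dropping_below_strong} only controls a time $\gamma m<m=\sqrt{c}\,s$, which is a small part of the block $[s,2s]$. In fact the process dies during the block with probability about $e^{-c}$.
\item If $c$ is large, the block itself is controlled. But going from ``$\ge cs^2$ at time $s$'' to ``$\ge c(2s)^2$ at time $2s$'' has probability at most $1/4$, since $\p_{cs^2}(X_s\ge 4cs^2)\le 1/4$ by the martingale property. For large $c$ it is much less than $1/4$, because $cs^2$ individuals have about $cs^2$ descendants $s$ generations later.
\end{itemize}
In neither case do you get per-scale probabilities $\frac14(1-\delta_s)$ with $\sum_s\delta_s<\infty$. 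There are about $\log(1/\eps)$ dyadic scales between $t_1$ and $\eps n$, so any constant-factor loss per scale costs a power of $\eps$. Making your route rigorous would need a lower-envelope estimate for the $\theta$-process conditioned on survival to $\eps n$, with thresholds decaying in the scale, plus a separate treatment of the random removals. The proposal supplies neither.

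The missing idea, which is how the paper avoids any conditioned-process estimate, is to compare with the unconstrained bubble rather than follow a single surviving top tree.
\begin{itemize}
\item The bubble's height $X$ is a plain Galton--Watson process started from $B\eps^4n^2$. By \Cref{lem:tail_bound_minimum} (or directly from~\eqref{eq:bubble_lifetime}) it is still alive at time $2\eps n$ with probability at least $cB\eps^2$, with $c$ universal.
\item Suppose that on this event the top exploration merges at some $t<\eps n$. Then every time-$t$ individual below the top geodesic belongs to a slice of the bottom exploration added at a time $\le t$. Hence one of these slices must itself survive until $2\eps n$.
\item Each slice survives that long with probability at most $\sqrt{B}\eps^2$. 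A union bound over the $O(k_02^{k_0(u+1/2)})$ slices added before $\sqrt[4]{B}2^{k_0/2}\eps^2n$ costs $O(\sqrt{B}k_02^{k_0(u+1/2)}\eps^2)$.
\item For the slices of the $k$-th window with $k\ge k_0$, multiply by the factor $\p(X\text{ alive at time }\sqrt[4]{B}2^{k/2}\eps^2n)\le\sqrt{B}\,2^{-k}$. This factor is independent of the slice, which consists of fresh subtrees. The result is $B\eps^2\sum_{k\ge k_0}2^{k(u-1/2)+1}$.
\item Taking $k_0$ large and then $B\ge C_0k_0^22^{k_0(1+2u)}$ makes the total at most $\frac{c}{2}B\eps^2$. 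This gives the lemma with constant $c/2$.
\end{itemize}
Your Step 1 bookkeeping (the count of early slices and the role of the constraint on $B$) is exactly what enters here. The lower bound, however, has to come from this comparison, not from tracking a conditioned surviving tree.
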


\begin{proof}
\noindent\steplabel{1}{step:topsurv_1}\emph{Step 1. The bubble has probability at least $cB\eps^2$ of staying high up to time $2\eps n$.} By \Cref{lem:tail_bound_minimum}, the probability that the bubble of initial height $B\eps^4 n^2$ survives until time $2\eps n$ and never drops below $\sqrt{B}\eps^4 n^2$ before then is
\begin{align*}
\p_{B\eps^4 n^2}\!\left(\min_{0 \le j \le 2\eps n} X_j \ge \sqrt{B}\eps^4 n^2\right) &\ge \left(1-\frac{1}{(2\eps n+1)^2}\right)^{\sqrt{B}\eps^4 n^2}-\left(1-\frac{1}{(4\eps n+1)^2}\right)^{B\eps^4 n^2}\\
&\ge \left(1-\frac{\sqrt{B}\eps^4 n^2}{(2\eps n+1)^2}\right)-\left(1-\frac{B\eps^4 n^2}{(4\eps n+1)^2}+\frac{B^2 \eps^8 n^4}{2(4\eps n+1)^4}\right)\\
&\ge cB\eps^2
\end{align*}
for some positive constant $c$ (here $X_t$ is the height at time $t$ of the bubble of initial height $B\eps^4n^2$ started at $(0,0)$, which by \Cref{def:bubble} evolves as a Galton--Watson process started from $B\eps^4n^2$).  Indeed, using $(4\eps n+1)^{-2} \ge (4\eps n)^{-2}(1-(2\eps n)^{-1})$, $(2\eps n+1)^{-2} \le (2\eps n)^{-2}$ and $(4\eps n+1)^{-4} \le (4\eps n)^{-4}$, the right-hand side of the second line is at least
\[B\eps^2\left(\frac{1}{16}-\frac{1}{4\sqrt{B}}-\frac{B\eps^2}{512}-\frac{1}{32\eps n}\right),\]
the term in the parentheses is at least a universal constant for $B$ large, $B\eps^2 \le 1$ and $\eps n$ is large; the last of these holds for all small $\eps$ since $n \ge \eps^{-4}$.  The constant $c$ may therefore be taken to be universal.

\noindent\steplabel{2}{step:topsurv_2}\emph{Step 2. Reduction to the survival of a slice of the bottom exploration.} Now, suppose that this event happens, but the top exploration merges with the bottom before time $\eps n$. Then it means that some slice that is added in the bottom exploration before time $\eps n$ must have survived until time $2 \eps n$: at the merging time $t<\eps n$ the top exploration is empty, so every tree lying below the bottom-most geodesic started from $(0,B\eps^4n^2)$ at time $t$ belongs to a slice of the bottom exploration added at some time $\le t$; since $X_{2\eps n} \ge \sqrt{B}\eps^4n^2>0$, one of these trees has a descendant at time $2\eps n$. If we can show that the probability the latter happens is less than $c'B\eps^2$ for a constant $c'<c$, then this will prove the lemma.

\noindent\steplabel{3}{step:topsurv_3}\emph{Step 3. The probability that a slice of the bottom exploration survives until time $2\eps n$.} Each slice started before time $\eps n$ has probability at most
\[1-\left(1-\frac{1}{(\eps n+1)^2}\right)^{\sqrt{B}\eps^4 n^2} \le \sqrt{B}\eps^2\]
of surviving until time $2\eps n$. Consider the time interval between $\sqrt[4]{B}2^{k/2} \eps^2 n$ and $\sqrt[4]{B}2^{(k+1)/2} \eps^2 n$; since
\[T_{\text{max}}=\sqrt[4]{B}2^{(k_{\text{max}}+1)/2}\eps^2 n>B^{1/8}\eps n> \eps n,\]
every slice added before time $\eps n$ is added in one of those intervals for some $1 \le k \le k_{\text{max}}$. As already noted in the proof of \Cref{prop:bottom_exploration_good}, there are at most $2^{k(u+1/2)+1}$ slices added in this interval. Let $k_0$ be a large positive integer. We once again break things into two cases, according to whether $k$ is larger or smaller than $k_0$.

\noindent\emph{Case 1. The early stages $k<k_0$.} By a union bound, the probability that one of the slices between $\sqrt[4]{B}2^{k/2} \eps^2 n$ and $\sqrt[4]{B}2^{(k+1)/2} \eps^2 n$ survives until $2\eps n$ is at most $\sqrt{B}2^{k(u+1/2)+1}\eps^2$. Then taking a union bound over $k<k_0$ gives that with probability at most $\sqrt{B}k_0 2^{k_0(u+1/2)+1}\eps^2$, one of the slices added before time $\sqrt[4]{B}2^{k_0/2}\eps^2 n$ (including the initial slice, whose survival probability satisfies the same bound) survives until time $2\eps n$.

\noindent\emph{Case 2. The later stages $k \ge k_0$.} The event that the bubble survives until time $\sqrt[4]{B}2^{k/2}\eps^2 n$ is measurable with respect to the skeleton up to that time, while the survival of a slice added at a later time concerns the fresh subtrees rooted at that later time; by the branching property, the two events are therefore independent, and their probabilities may be multiplied. The first of these has probability
\[1-\left(1-\frac{1}{(\sqrt[4]{B}2^{k/2}\eps^2 n+1)^2}\right)^{B\eps^4 n^2} \le \frac{\sqrt{B}}{2^k}.\] 
As already explained above, there are at most $2^{k(u+1/2)+1}$ slices in this interval, and each one has probability no larger than $\sqrt{B}\eps^2$ of surviving until $2\eps n$. So by a union bound, the probability that the bubble survives until time $\sqrt[4]{B}2^{k/2}\eps^2 n$ and one of the slices in the interval from $\sqrt[4]{B}2^{k/2} \eps^2 n$ to $\sqrt[4]{B}2^{(k+1)/2} \eps^2 n$ survives until $2\eps n$ is at most
\[\frac{\sqrt{B}}{2^k} \cdot 2^{k(u+1/2)+1}\cdot \sqrt{B}\eps^2=B\eps^2 \cdot 2^{k(u-1/2)+1}.\]
Taking a union bound over $k \ge k_0$ yields a bound of
\[B\eps^2 \sum_{k=k_0}^{k_{\max}} 2^{k(u-1/2)+1} \le CB\eps^2 2^{k_0(u-1/2)+1}\]
for a universal constant $C$, since $u<1/2$ makes the sum a convergent geometric series with ratio $2^{u-1/2}$ bounded away from~$1$.

\noindent\steplabel{4}{step:topsurv_4}\emph{Step 4. Conclusion.} Combining these cases, we therefore have that the probability that the bubble never drops below $\sqrt{B}\eps^4 n^2$ before time $2\eps n$ and the top merges with the bottom before time $\eps n$ is at most
\[\left(\sqrt{B}k_0 2^{k_0(u+1/2)+1}+CB 2^{k_0(u-1/2)+1}\right)\eps^2.\]
By picking $k_0$ very large, and then making sure that $B$ is much larger than $2^{k_0}$, this is less than $c'B\eps^2$ for a constant $c'$ that can be made as small as we want. Indeed, the second term is at most $c'B\eps^2/2$ once $2^{k_0(1/2-u)}$ is large compared with $1/c'$, which fixes $k_0$; the first is then at most $c'B\eps^2/2$ once $\sqrt{B}$ is large compared with $k_02^{k_0(u+1/2)}/c'$, that is, once $B \ge C_0k_0^2 2^{k_0(1+2u)}$, the constraint recorded in \Cref{fn:parameters}.  Taking $c'=c/2$, the probability that the top exploration merges after time $\eps n$ is therefore at least $(c-c')B\eps^2=cB\eps^2/2$, which proves the lemma with $c_{\ref{lem:top_exploration_survives_long}} = c/2$.
\end{proof}

We now prove the lower bound of \Cref{prop:good_bubble_probability}.
\begin{proof}[Proof of \Cref{prop:good_bubble_probability}, lower bound]

We must show that there is a constant $c_{\ref{prop:good_bubble_probability}}>0$ such that with probability at least $c_{\ref{prop:good_bubble_probability}}B \eps^2$, all the conditions in \Cref{def:bottom_good,def:top_good,def:bubble_good} hold.

\noindent\steplabel{1}{step:goodbubble_1}\emph{Step 1. Discarding condition~\eqref{it:max_height_top} of \Cref{def:top_good} and the upper bound on the merging time in~\eqref{it:top_survival} of \Cref{def:top_good}.} We begin with a few simplifications. First, by \Cref{lem:tail_bound_maximum}, together with the fact that the height process of the top exploration is stochastically dominated by a Galton--Watson process started from $B\eps^4n^2$, the probability that the height process of the top exploration ever exceeds $R\eps^2n^2/4$, i.e.\ that~\eqref{it:max_height_top} fails, is at most (the hypothesis $x \ge y$ of that lemma holding because $R/4 \ge B\eps^2$ for $\eps$ small)
\[\p_{B\eps^4 n^2}\!\left(\max_{j \ge 0} X_j \ge \frac{R\eps^2 n^2}{4}\right) \le \frac{4B \eps^2}{R}.\]
This is negligible compared to $c_{\ref{prop:good_bubble_probability}}B \eps^2$ if $R$ is large. Next, the height process of the top exploration is stochastically dominated by a Galton--Watson process with offspring distribution $\theta$, so the probability that it merges with the bottom after time $R\eps n$ is at most
\[\p_{B\eps^4 n^2}(X_{R\eps n}>0)=1-\left(1-\frac{1}{(R\eps n+1)^2}\right)^{B\eps^4 n^2}\le \frac{B\eps^2}{R^2}.\]
Once again, this is negligible compared to $c_{\ref{prop:good_bubble_probability}}B \eps^2$.

\noindent\steplabel{2}{step:goodbubble_2}\emph{Step 2. Discarding condition~\eqref{it:initial_distance}.} We next take care of condition~\eqref{it:initial_distance} of \Cref{def:top_good}, namely that the distance between $(0,B\eps^4n^2)$ and $(0,\sqrt{B}\eps^4n^2)$ \emph{in the top exploration} is at most $\eps^{2-u}n$. The first slice to be removed from the top exploration and added to the bottom exploration is removed at the time $t_1$, the smallest multiple of $\sqrt[4]{B}2^{-u}\eps^2n$ which is at least $\sqrt[4]{B}2^{1/2}\eps^2 n$, so that $t_1 \ge \sqrt{2}\sqrt[4]{B}\eps^2 n$. Up to time $t_1$, but not after, the top exploration coincides with the region of the LHPT between the bottom-most geodesics started from $(0,\sqrt{B}\eps^4n^2)$ and $(0,B\eps^4n^2)$.  The path produced by \Cref{lem:upper_bound_distances} at scale $r$ has horizontal extent at most $r$, and $r=\sqrt{B-\sqrt{B}}\,\eps^2 n$ would exceed $t_1$ for large $B$; we therefore apply it at a smaller scale, splitting $[\sqrt{B}\eps^4n^2, B\eps^4n^2]$ into $M$ equal parts.  Set
\[M=\lceil B \rceil, \qquad \sigma=\sqrt{\frac{B-\sqrt{B}}{M}}\,\eps^2 n, \qquad h_i=\sqrt{B}\eps^4 n^2+(i-1)\sigma^2 \quad (1 \le i \le M+1),\]
so that $h_1=\sqrt{B}\eps^4n^2$, $h_{M+1}=B\eps^4 n^2$ and
\[\sigma \le \sqrt{\frac{B-\sqrt{B}}{B}}\,\eps^2 n \le \eps^2 n<\sqrt{2}\sqrt[4]{B}\eps^2 n \le t_1.\]
Apply \Cref{lem:upper_bound_distances} to each of the $M$ pairs $(0,h_i)$, $(0,h_{i+1})$, with $\sigma$ in place of $r$ and $\lceil \eps^{-u/2}\rceil$ in place of $x$, and use invariance of the law of the LHPT under vertical translations.  By a union bound, with probability at least $1-M\exp(-c\eps^{-u/2}) \ge 1-\exp(-c'\eps^{-u/2})$ we obtain, simultaneously for all $i$, a path from $(0,h_i)$ to $(0,h_{i+1})$ of length less than $(2\sigma+1)x$ and horizontal extent at most $\sigma$, contained in the bubble between the bottom-most geodesics started from these two points.  Each such bubble lies between the bottom-most geodesics started from $(0,\sqrt{B}\eps^4n^2)$ and $(0,B\eps^4n^2)$, and each path lies to the left of time $\sigma<t_1$, so each is contained in the top exploration.  Concatenating them gives a path from $(0,\sqrt{B}\eps^4n^2)$ to $(0,B\eps^4n^2)$ inside the top exploration, of length less than
\[M(2\sigma+1)x \le 2\sqrt{M(B-\sqrt{B})}\,\eps^2 n\,x+Mx \le CB\eps^{2-u/2}n+CB\eps^{-u/2} \le \eps^{2-u}n\]
for all $\eps$ sufficiently small, where $C$ is universal; here $\sqrt{M(B-\sqrt{B})}$ and $M$ are both at most $CB$ and $x \le 2\eps^{-u/2}$, and the last inequality holds because $\eps^{u/2}$ is small compared with $1/B$ and $n \ge \eps^{-4}$ (see \Cref{fn:parameters}).  As $\exp(-c'\eps^{-u/2})$ is of much smaller order than $B\eps^2$, we may ignore~\eqref{it:initial_distance} as well.

\noindent\steplabel{3}{step:goodbubble_3}\emph{Step 3. Condition~\eqref{it:merge_distance}, and reduction to four events.} Finally, let $E_1$ be the event that the bottom exploration is good \emph{and} the distance condition in~\eqref{it:merge_distance} of \Cref{def:bubble_good} holds.  The merging time $\ell$ is measurable with respect to the top exploration, which is independent of the bottom exploration, and by \Cref{lem:bottom_exploration_typical_height} (applied conditionally on the top exploration, with a bound uniform over $\ell \ge \eps n$) that distance condition fails with conditional probability $O(\eps^u)$.  Hence, conditionally on any top-exploration event on which $\ell \ge \eps n$, the event $E_1$ has probability at least $p_{\ref{prop:bottom_exploration_good}} - O(\eps^u) \ge p_{\ref{prop:bottom_exploration_good}}/2$ for $\eps$ small.  (Both~\eqref{it:merge_distance} and the goodness of the bottom exploration concern the bottom exploration, so their probabilities cannot be multiplied; this conditional bound is how~\eqref{it:merge_distance} is incorporated below.)

In summary, it thus remains to show that the probability of the intersection of the following four events is at least $cB\eps^2$ for some constant $c>0$:
\begin{itemize}
\item $E_1$: The bottom exploration is good \emph{and} the distance condition in~\eqref{it:merge_distance} holds;
\item $E_2$: The top exploration merges with the bottom exploration after time $\eps n$;
\item $E_3$: The height process of the top exploration at time $\eps n/2$ is at least $\eps^2n^2/R$ (i.e., condition~\eqref{it:middle_height} holds);
\item $E_4$: Condition~\eqref{it:middle_graph_distance} holds, i.e.\ the distance between $c_1$ and the bottom geodesic of the bubble is at least $\sqrt{D}\eps n$.
\end{itemize}

\noindent\steplabel{4}{step:goodbubble_4}\emph{Step 4. Splitting the bubble at time $\eps n/4$.} Recall that $(Y_j)_{j \ge 0}$ denotes the height process for the top exploration, described before \Cref{def:top_good}. We of course have that $E_2$ is the event that $Y_{\eps n}>0$.

We have (note that $E_2 \subseteq \{Y_{\eps n/4}>0\}$)
\[\p(E_1 \cap E_2 \cap E_3 \cap E_4)=\p(Y_{\eps n/4}>0) \p(E_1 \cap E_2 \cap E_3 \cap E_4 \giv Y_{\eps n/4}>0).\]
By \Cref{lem:top_exploration_survives_long},
\[\p(Y_{\eps n/4}>0) \ge \p(Y_{\eps n}>0) \ge c_{\ref{lem:top_exploration_survives_long}} B \eps^2.\]
So now we must show that $\p(E_1 \cap E_2 \cap E_3 \cap E_4 \giv Y_{\eps n/4}>0) \ge c$ for some positive constant $c$. Recall that $E_3$ is the event $Y_{\eps n/2}\ge \eps^2 n^2/R$. Then
\begin{align*}
\p(E_1 \cap E_2 \cap E_3 \cap E_4 \giv Y_{\eps n/4}>0)&\ge \p(E_1 \cap E_2 \cap E_3 \giv Y_{\eps n/4}>0)-\p(E_3 \cap E_4^c \giv Y_{\eps n/4}>0)\\
&=\Pi_1-\Pi_2.
\end{align*}

\noindent\steplabel{5}{step:goodbubble_5}\emph{Step 5. Upper bound for $\Pi_2$.} Let $\kappa>0$, let $K=K(\kappa)$ be the integer provided by \Cref{lem:lower_bound_distances}, and set $r=\sqrt{D}\eps n$; see \Cref{fig:high distance point}.  We assume that
\[D \le \min\left(\frac{1}{144},\ \frac{1}{KR}\right),\]
which is possible since in the order of \Cref{fn:parameters} $R$ is fixed before $D$, while $\kappa$ (hence $K$) is chosen in \Cref{step:goodbubble_9} in terms of $u$ alone.  With these choices $3r \le \eps n/4$, so that all of the paths considered below lie at times greater than $\eps n/4$, and $\eps n/2 \ge 3r$, as \Cref{lem:lower_bound_distances} requires.

On $E_3$ the vertex $c_1$ has height at least $Y_{\eps n/2} \ge \eps^2n^2/R \ge Kr^2$, since its height is the sum of the heights of the two explorations at time $\eps n/2$ and since $Kr^2=KD\eps^2n^2 \le \eps^2n^2/R$.  If $E_4$ fails, then some path of length less than $r$ started at $c_1$ visits a vertex $(t,h)$ with $h \le 0$.

A vertex of height $h<0$ lies strictly below the bottom-most geodesic $\Z_{\ge0}\times\{0\}$ started from the origin, whereas $c_1$ lies strictly above it, and a path cannot cross that geodesic without visiting one of its vertices; so the path visits a vertex $(t,0)$, necessarily with $|t-\eps n/2|<r$.  On $E_3$ this possibility is therefore contained in the event $N_{\downarrow}$ that some path of length less than $r$ started at a vertex $(\eps n/2,j)$ with $j \ge Kr^2$ visits a vertex of $\Z_{\ge0}\times\{0\}$.  Reversing the paths and applying \Cref{lem:lower_bound_distances} with $h_0=0$, this $r$ and $t_0=\eps n/2$ gives $\p(N_{\downarrow}) \le \kappa$.  Moreover $N_{\downarrow}$ is measurable with respect to $\CL_{[\eps n/4,\infty)}$: the paths involved lie at times greater than $\eps n/4$, and the height labeling to the right of that time is determined by $\CL_{[\eps n/4,\infty)}$, the bottom-most geodesic from the origin passing through $(\eps n/4,0)$.  Since $\{Y_{\eps n/4}>0\}$ is measurable with respect to $\CL_{[0,\eps n/4]}$, which is independent of $\CL_{[\eps n/4,\infty)}$, the conditioning may be removed, and this possibility contributes at most $\kappa$ to $\Pi_2$. We obtain that $\Pi_2 \le \kappa$, which is as small as we like once $\kappa$, and then $D$, is small.

\noindent\steplabel{6}{step:goodbubble_6}\emph{Step 6. Lower bound for $\Pi_1$.} By the above upper bound for $\Pi_2$, it now suffices to show that $\Pi_1$ is bounded below by a positive constant. The events $E_2$, $E_3$ and $\{Y_{\eps n/4}>0\}$ are measurable with respect to the top exploration, and on their intersection $\ell \ge \eps n$; so by the conditional bound of \Cref{step:goodbubble_3},
\[\Pi_1=\p\!\big(E_1 \giv E_2 \cap E_3 \cap \{Y_{\eps n/4}>0\}\big)\,\p(E_2 \cap E_3 \giv Y_{\eps n/4}>0) \ge \frac{p_{\ref{prop:bottom_exploration_good}}}{2} \cdot \p(E_2 \cap E_3 \giv Y_{\eps n/4}>0).\] So now we can focus on lower bounding $\p(E_2 \cap E_3 \giv Y_{\eps n/4}>0)$. We have
\[\p(E_2 \cap E_3 \giv Y_{\eps n/4}>0)=\p(E_2 \giv Y_{\eps n/4}>0)-\p(E_2 \cap E_3^c \giv Y_{\eps n/4}>0)=\Pi_3-\Pi_4.\]

\noindent\steplabel{7}{step:goodbubble_7}\emph{Step 7. Lower bound for $\Pi_3$.} As already discussed above, the height process for the top exploration is stochastically dominated by a Galton--Watson process with offspring distribution $\theta$. Thus
\[\p(Y_{\eps n/4}>0) \le \p_{B\eps^4 n^2}(X_{\eps n/4}>0)=1-\left(1-\frac{1}{(\eps n/4+1)^2}\right)^{B\eps^4 n^2} \le 16B\eps^2.\]
By \Cref{lem:top_exploration_survives_long}, $\p(Y_{\eps n}>0) \ge c_{\ref{lem:top_exploration_survives_long}} B\eps^2$, and so
\[\Pi_3=\frac{\p(Y_{\eps n}>0)}{\p(Y_{\eps n/4}>0)} \ge \frac{c_{\ref{lem:top_exploration_survives_long}}}{16}.\]

\noindent\steplabel{8}{step:goodbubble_8}\emph{Step 8. Upper bound for $\Pi_4$.} By the Markov property of the $Y$ process, 
\begin{align*}
\Pi_4&=\p(E_2 \cap E_3^c \giv Y_{\eps n/4}>0)=\sum_{1 \le j < \eps^2 n^2/R} \p(E_2 \cap \{Y_{\eps n/2}=j\} \giv Y_{\eps n/4}>0)\\
&=\sum_{1 \le j < \eps^2 n^2/R} \p(E_2 \giv Y_{\eps n/2}=j) \p(Y_{\eps n/2}=j \giv Y_{\eps n/4}>0).
\end{align*}
Once again, using the fact that $Y$ is stochastically dominated by a Galton--Watson process and monotonicity in the initial height,
\[\p(E_2 \giv Y_{\eps n/2}=j) \le \p_j(X_{\eps n/2}>0) \le \p_{\eps^2 n^2/R}(X_{\eps n/2}>0)\]
for all $1 \le j < \eps^2 n^2/R$. Therefore
\begin{align*}
\Pi_4 &\le \p_{\eps^2 n^2/R}(X_{\eps n/2}>0)\p\!\left(1 \le Y_{\eps n/2}<\frac{\eps^2 n^2}{R} \giv Y_{\eps n/4}>0\right)\\
&\le \p_{\eps^2 n^2/R}(X_{\eps n/2}>0)\\
&=1-\left(1-\frac{1}{(\eps n/2+1)^2}\right)^{\eps^2 n^2/R} \le \frac{4}{R}.
\end{align*}
This can be made arbitrarily small if $R$ is large, which completes the bounds on $\Pi_1$ through $\Pi_4$.

\noindent\steplabel{9}{step:goodbubble_9}\emph{Step 9. Conclusion.} Write $p_0' = p_{\ref{prop:bottom_exploration_good}}/2$ and $c_0' = c_{\ref{lem:top_exploration_survives_long}}$, so that $p_0'$ depends only on $u$ and $c_0'$ is universal.  Choose $R$ large enough that $4/R \le c_0'/32$, then $\kappa = p_0'\,c_0'/128$ in \Cref{step:goodbubble_5}, and then $D$ small enough for the three bounds there, so that $\Pi_2 \le \kappa \leq p_0'\,c_0'/64$.  Then $\Pi_3-\Pi_4 \ge c_0'/16 - c_0'/32 = c_0'/32$, so that $\Pi_1 \ge p_0'(\Pi_3-\Pi_4) \ge p_0'c_0'/32$ by \Cref{step:goodbubble_6}, and hence, for all $\eps$ sufficiently small,
\[\p(E_1 \cap E_2 \cap E_3 \cap E_4 \giv Y_{\eps n/4}>0) \ge \Pi_1 - \Pi_2 \ge \frac{p_0'\,c_0'}{32}-\frac{p_0'\,c_0'}{64}=\frac{p_0'\,c_0'}{64}.\]
Combining with the two displays of \Cref{step:goodbubble_4},
\[\p(E_1 \cap E_2 \cap E_3 \cap E_4) \ge c_0'B\eps^2 \cdot \frac{p_0'\,c_0'}{64} =: c_1 B \eps^2.\]
The event $E_1 \cap E_2 \cap E_3 \cap E_4$ encodes conditions~\eqref{it:bottom_good}, \eqref{it:merge_distance} and~\eqref{it:middle_graph_distance} of \Cref{def:bubble_good}, together with~\eqref{it:middle_height} and the first half of~\eqref{it:top_survival} of \Cref{def:top_good}.  It remains to take into account the conditions set aside in \Cref{step:goodbubble_1,step:goodbubble_2}, by subtracting the failure probabilities $4B\eps^2/R$ for~\eqref{it:max_height_top}, $B\eps^2/R^2$ for the second half of~\eqref{it:top_survival}, and $\exp(-c'\eps^{-u/2})$ for~\eqref{it:initial_distance}:
\[p \ge c_1B\eps^2 - \frac{4B\eps^2}{R}-\frac{B\eps^2}{R^2}-\exp(-c'\eps^{-u/2}) \ge \frac{c_1}{2}B\eps^2\]
provided $R$ was also taken large enough that $4/R+1/R^2 \le c_1/4$, and $\eps$ is small enough.  The two requirements on $R$ are compatible, since $c_1 = p_0'(c_0')^2/64$ depends only on $u$.
\end{proof}

\subsection{Probability of at least one good bubble and failures}
\label{subsec:no_good_bubbles}

Suppose that we have fixed parameters $u$, $R$, $D > 0$, $B > 1$ and $\eps > 0$ as in \Cref{def:bubble_good} for a bubble to be good.  Let $E$ be the event that there exists an integer $\eps^{-2}/3 \le j \le 2\eps^{-2}/3$ so that the bubble started at time $j \eps^2 n$ is good (defined exactly as in \Cref{subsec:bottom_top_explorations}, but with the two explorations started from $\{t\} \times [0, B\eps^4n^2]$ in place of $\{0\} \times [0,B\eps^4n^2]$; by the Markov property of the LHPT its law does not depend on $t$). Although the probability of a given bubble being good is quite small, the experiment gets repeated a number of times of order $\eps^{-2}$. By adjusting the parameter $B$ accordingly, we can make the probability of the event $E$ that we observe at least one good bubble as close to $1$ as we want. Here is the precise statement.

\begin{proposition}
\label{prop:no_good_bubbles}
There is a constant $c_{\ref{prop:no_good_bubbles}} > 0$ which depends only on $u$ (and not on $B, R$ or $D$) such that for all $\eps$ sufficiently small,
\[\p(E^c)\le c_{\ref{prop:no_good_bubbles}}  \left(\eps+\frac{1}{\sqrt{B}}\right).\]
In particular, for any $q<1$, we can choose $B$ sufficiently large so that for all sufficiently small $\eps$, the probability of seeing at least one good bubble is at least $q$.
\end{proposition}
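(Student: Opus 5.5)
The plan is to view the roughly $\eps^{-2}/3$ candidate bubbles started at times $j\eps^2 n$, $\eps^{-2}/3 \le j \le 2\eps^{-2}/3$, as nearly independent trials, each succeeding with probability $p \ge c_{\ref{prop:good_bubble_probability}} B\eps^2$ by \Cref{prop:good_bubble_probability}. If they were genuinely independent, then $\p(E^c) \le (1-c B\eps^2)^{\eps^{-2}/3} \le e^{-cB/3}$, which is much better than claimed. They are not independent, because the bubble started at time $j\eps^2 n$ can live up to time $R\eps n$ and so overlaps many later candidates. The weaker bound $\eps + B^{-1/2}$ indicates that a second-moment argument, or a sequential search with a restart after each failure, is what is intended. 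I would use the sequential version.

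\textbf{Sequential search.} Explore the candidates in increasing order of $j$. Each candidate is decided by its bottom and top explorations. Once the candidate at time $t=j\eps^2 n$ has been resolved as not good, the next candidate examined is the first one starting after the region it has explored. By the Markov property of the LHPT (the restriction $\CL_{[t',\infty)}$ is independent of $\CL_{[0,t']}$ and has the same law, up to vertical translation along the bottom-most geodesic from the origin), each fresh candidate has conditional probability at least $cB\eps^2$ of being good.

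\textbf{Controlling the time spent by a failed attempt.} Let $\tau_j$ be the time consumed by a failed attempt, namely the time until both explorations of that candidate have been resolved. A first attempt is to stop the top exploration at its merging time, or at time $\eps n$ once the event $Y_{\eps n}>0$ has already failed. The top exploration is dominated by a Galton--Watson process started from $B\eps^4 n^2$, so by \Cref{lem:expected_length_of_bubble} its lifetime has mean at most $2\sqrt{B}\eps^2 n$. The difficulty is the bottom exploration, which runs until time $R\eps n$ regardless. I would handle this by stopping each attempt at $\min(\text{merging time},\eps n)$ and deciding goodness only from information available by then. This requires knowing that conditions \eqref{it:bottom_good}--\eqref{it:middle_graph_distance} at later times hold with conditional probability bounded below; that is exactly what the lower-bound proof of \Cref{prop:good_bubble_probability} supplies in conditional form. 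The price is that a bubble may be overlapped by the previous one's unexplored tail, which is where the error term will come from.

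\textbf{Counting attempts.} With mean consumed time $O(\sqrt B\eps^2 n)$ per failure, Markov's inequality shows that $N$ failures use total time at most $N\sqrt B\eps^2 n/\delta$ with probability at least $1-\delta$. Taking $N \asymp \eps^{-2}/\sqrt B$ fits inside the window of length $\eps n/3$, and then
\[
\p(\text{all } N \text{ fail}) \le (1-cB\eps^2)^{N} \le e^{-c\sqrt B}.
\]
Optimizing $\delta$ should give a bound of order $B^{-1/2}$. The heavy tail of the lifetime, $\p(T>k)\sim \sqrt{B}\eps^4 n^2/k^2$, gives $\p(\text{some attempt exceeds }\eps n) = O(\text{number of attempts}\cdot B\eps^2)$, and this is absorbed the same way. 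The remaining $O(\eps)$ term comes from rounding, from boundary effects at $j=\eps^{-2}/3$ and $j=2\eps^{-2}/3$, and from the exceptional events with probabilities $O(\eps^u)$ and $\exp(-c\eps^{-u})$ appearing in \Cref{lem:bottom_exploration_typical_height}.

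\textbf{Main obstacle.} The hard step is making the decorrelation rigorous. Each attempt must decide goodness using only the region up to its stopping time, while still retaining conditional success probability at least $cB\eps^2$ uniformly in the past. If that proves too delicate, I would fall back on a second-moment bound for $\sum_j \one_{G_j}$. Its mean is about $B/3$, and the covariances between $G_j$ and $G_{j'}$ would be bounded using \Cref{lem:conditional_prob_given_good}-type factorization together with the fact that two far-apart top explorations are dominated by independent Galton--Watson processes. Chebyshev's inequality would then give a bound of order $1/B$ plus error terms, which is consistent with the stated bound of order $B^{-1/2}$.
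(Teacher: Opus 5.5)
\textbf{Assessment: the proposal has genuine gaps.} Your overall plan agrees with the paper's. You restart the search after each failed bubble, each failure costs time of order $\sqrt B\eps^2 n$ by \Cref{lem:expected_length_of_bubble}, and each trial succeeds with probability at least $c_{\ref{prop:good_bubble_probability}}B\eps^2$. But the counting step does not give the stated bound. The search window $[n/3,2n/3]$ has length $n/3$. Markov's inequality only guarantees that $N$ attempts of mean length of order $\sqrt B\eps^2 n$ fit in it with probability $1-\delta$ when $N\lesssim\delta\eps^{-2}/\sqrt B$. With your choice $N\asymp\eps^{-2}/\sqrt B$, the Markov bound on the elapsed time is of order $n/\delta$, not $n/3$. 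With the admissible $N$, the product bound gives $\p(E^c)\lesssim\delta+\exp(-c\delta\sqrt B)$, and optimizing over $\delta$ yields only $O((\log B)/\sqrt B)$.

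The missing idea is to apply Markov's inequality once, to the total time elapsed until a good bubble has been found. This is a sum of a geometric number of i.i.d.\ increments, and its mean is computed by Wald's identity. Let $N$ now denote the number of failed trials. Then:
\begin{itemize}
\item $\E N\le 1/(c_{\ref{prop:good_bubble_probability}}B\eps^2)$;
\item $\E(T\giv G^c)\le\E_{B\eps^4n^2}(T)/(1-B\eps^2)\le 3\sqrt B\eps^2n$;
\item $\E(T\giv G)\le\E_{B\eps^4n^2}(T\one_{\{T\ge\eps n\}})/p\le 3\eps n/c_{\ref{prop:good_bubble_probability}}$.
\end{itemize}
So the expected elapsed time is $O(n/\sqrt B+\eps n)$, plus $O(n/B)$ from rounding to the grid, and dividing by $n/3$ gives the claim. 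This is the paper's proof.

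It also shows where the $\eps$ term really comes from: the lifetime of the good bubble itself. It does not come from the exceptional events of \Cref{lem:bottom_exploration_typical_height}; those are already built into $p$, and would in any case give $\eps^u$, not $\eps$. Nor does the heavy tail of the lifetime need separate treatment, since it is already accounted for in $\E T$. Your proposed bound for it, (number of attempts)$\cdot B\eps^2$, is of order $\sqrt B$ for your $N$ and so is not small anyway.

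The decorrelation, which you leave open, is only half handled by your device. Stopping at the merging time $\ell$ when $\ell<\eps n$ is fine: the trial has then failed, and this is decided by $\CL_{[t,t+\ell]}$ because $\ell$ is a stopping time and $\ell\le T$. But stopping at time $\eps n$ when $\ell\ge\eps n$ does not decide goodness. Conditions \eqref{it:bottom_height_bound}, \eqref{it:top_survival} and \eqref{it:merge_distance} look up to time $R\eps n$ after the start, and the overlap error you invoke is never quantified.

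The remedy is to wait, in that case, until the end of the window $[t,t+2R\eps n]$, which determines goodness. This happens with probability at most $B\eps^2$ per trial, and such a trial is good with conditional probability at least $c_{\ref{prop:good_bubble_probability}}$. By the same Wald computation it therefore adds only $O(R\eps n)$ to the expected elapsed time, which is harmless since $\eps$ is chosen after $R$. The trials are then i.i.d.\ by the strong Markov property. (The paper instead restarts at the first grid time after the death of the bubble, when all of its $B\eps^4n^2$ trees are extinct, so that the next bubble is built from fresh subtrees.)

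Finally, your second-moment fallback is not a safe alternative. A candidate started a few grid steps after a good bubble is, with conditional probability bounded below, carried by the same long-lived lineage, so nearby goodness events are strongly positively correlated. \Cref{lem:conditional_prob_given_good} does not control these correlations: it only concerns the region of height $R^3\eps^4n^2$ at the bottom of a bubble, whereas these candidates have initial height $B\eps^4n^2$. Heuristically a good bubble comes with a cluster of order $\sqrt B$ good candidates. The variance of $\sum_j\one_{G_j}$ is then of order $B^{3/2}$ rather than $B$, so Chebyshev cannot give $1/B$.
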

\begin{proof}
\noindent\steplabel{1}{step:nogoodbub_1}\emph{Step 1. A renewal sequence of bubbles.} We begin by defining a sequence of random times $(S_k)_{k \ge 0}$, which will all be of the form $j\eps^2n$ for some $j \ge \eps^{-2}/3$, as follows. First we let $S_0=n/3$. Now assume that $S_{k-1}$ has been defined for some $k \ge 1$. We then define $S_k$ to be the smallest time of the form $j\eps^2n$ which is larger than or equal to the time when the bubble started at $S_{k-1}$ dies, i.e.\ larger than or equal to $S_{k-1}$ plus the lifetime of that bubble.  Note that the bubbles defined at the times $S_k$ are independent of each other (by the strong Markov property: $S_k$ is determined by the LHPT to the left of $S_k$, and by the branching property, the bubble started at $S_k$ is a function of fresh randomness).  In particular, the increments $(S_k-S_{k-1})_{k \ge 1}$ are i.i.d., and have the same law as
\[\left \lceil \frac{T}{\eps^2n} \right \rceil \eps^2n\]
where $T$ denotes the lifetime of a bubble of initial height $B\eps^4n^2$: by \Cref{def:bubble} its height is a Galton--Watson process started from $B\eps^4n^2$, so that $T$ is the extinction time of $X$ under $\p_{B\eps^4n^2}$ and its law is given by~\eqref{eq:bubble_lifetime}. We write $G$ for the event that this bubble is good, so that $p:=\p(G)$ is the probability appearing in \Cref{prop:good_bubble_probability}.

Next, let $N$ be the smallest integer $k$ such that the bubble started at $S_k$ is good, and let $\tau=S_{N+1}$. Thus $\tau$ is the first time of the form $j\eps^2n$ by which a good bubble started at one of the $S_k$ has been observed in its entirety.

\noindent\steplabel{2}{step:nogoodbub_2}\emph{Step 2. Reduction to an estimate for $\E(\tau-n/3)$.} If there are no good bubbles on $[n/3, 2n/3]$, then we must have $\tau \ge 2n/3$ (indeed, none of the bubbles started at the times $S_k \in [n/3,2n/3]$ is then good, so $S_N > 2n/3$). So by Markov's inequality,
\begin{equation}\label{eq:markov_no_good_bubbles}
\p(E^c) \le \p\!\left(\tau \ge \frac{2n}{3}\right) \le \frac{3\E(\tau-n/3)}{n}.
\end{equation}

The law of $\tau$ is the same as that of
\[\left \lceil \frac{W}{\eps^2n} \right \rceil \eps^2n+\sum_{k=1}^{N} \left \lceil \frac{V_k}{\eps^2n} \right \rceil \eps^2n+\frac{n}{3},\]
where $N$ is a geometric random variable on $\{0,1, \dots\}$ with success probability $p=\p(G)$, the $V_k$'s are i.i.d.\ with the law of $T$ conditioned on $G^c$, and $W$ has the law of $T$ conditioned on $G$, with all these variables independent.

\noindent\steplabel{3}{step:nogoodbub_3}\emph{Step 3. Estimates for $\E(N)$, $\E(V_1)$ and $\E(W)$.} By \Cref{prop:good_bubble_probability},
\[\E(N)=\frac{1}{p}-1\le \frac{1}{c_{\ref{prop:good_bubble_probability}} B\eps^2}.\]
Moreover,
\begin{align*}
\E(V_1)
&=\frac{\E(T \one_{G^c})}{1-p}
 \le \frac{\E_{B\eps^4n^2}(T)}{1-B\eps^2} \quad\text{(by \Cref{prop:good_bubble_probability})}\\
&\le c\sqrt{B}\eps^2n \quad\text{(by \Cref{lem:expected_length_of_bubble})}
\end{align*}
for all $\eps > 0$ sufficiently small and a constant $c > 0$. Finally,
\begin{align*}
\E_{B\eps^4n^2}(T \cdot \one_{\{T \ge \eps n\}})&\le \sum_{k<\eps n} \p_{B\eps^4n^2}(T \ge \eps n)+\sum_{k \ge \eps n} \p_{B\eps^4n^2}(T \ge k)\\
&\le B\eps^3n+\sum_{k \ge \eps n} \frac{B\eps^4n^2}{k^2} \quad\text{(by~\eqref{eq:bubble_lifetime}, \eqref{eq:bernoulli})}\\
&\le 3B\eps^3n,
\end{align*}
so that by \Cref{prop:good_bubble_probability} again,
\[\E(W)=\frac{\E(T \one_{G})}{p} \le \frac{\E_{B\eps^4n^2}(T \cdot \one_{\{T \ge \eps n\}})}{p} \le c \eps n,\]
possibly increasing the value of $c > 0$; here we used that $G \subseteq \{T \ge \eps n\}$: on $G$ the two explorations merge at a time $\ell \ge \eps n$ by~\eqref{it:top_survival} of \Cref{def:top_good}, and so the top exploration has positive height and the bubble has not died by time $\eps n$.

\noindent\steplabel{4}{step:nogoodbub_4}\emph{Step 4. Conclusion.} Putting all this together, we thus find
\begin{equation}\label{eq:prob_no_good_bubbles}
\begin{aligned}
\E(\tau-n/3)&\le \E\!\left(W+\sum_{k=1}^{N} V_k +(N+1)\eps^2n\right)=\E(W)+\E(N)\E(V_1)+\E(N+1)\eps^2n\\
&\le c n\left(\eps+\frac{1}{\sqrt{B}}+\frac{1}{B}+\eps^2\right),
\end{aligned}
\end{equation}
possibly increasing $c > 0$ further at the end.
The proposition follows from this and~\eqref{eq:markov_no_good_bubbles}, upon using $1/B \le 1/\sqrt{B}$ and $\eps^2 \le \eps$ to absorb the last two terms into the first two, and taking $c_{\ref{prop:no_good_bubbles}}=6c$.
\end{proof}

When we establish the lower bound on the distortion in the next section, we think of exploring the LHPT from left to right looking for good bubbles at times of the form $j \eps^2 n$.  We refer to the situation where there are no good bubbles as ``failures''.  In this case, we will split the starting segment of length $n$ into six equal \emph{sub-segments} of length $n/6$ each, and redo the exploration on each of them independently (with the initial height and required maximal height rescaled appropriately).  We will also need to add an extra chance of there being a failure even if there are good bubbles present.  This is because we need to estimate the law of the exploration conditional on a failure having occurred at the previous level, and the knowledge that there are no good bubbles gives us too much information. By \Cref{prop:no_good_bubbles}, the probability of seeing no good bubbles is at most $c_{\ref{prop:no_good_bubbles}} (\eps+1/\sqrt{B})$. So every time we perform the exploration on a segment, we sample an independent $\Ber(c_{\ref{prop:no_good_bubbles}}/\sqrt{B})$, and if this Bernoulli is 1, then we declare that a failure has occurred (note that for $B$ large, $c_{\ref{prop:no_good_bubbles}}/\sqrt{B}$ is less than 1). This way, the added probability of a failure has the same order of magnitude as that of seeing no good bubbles, so when we condition on a failure having occurred, it is not possible to determine if it was because there were no good bubbles.

\begin{lemma}[Conditional law of the exploration during a failure]
\label{lem:conditional_prob_given_failure}
There exists a constant $c_{\ref{lem:conditional_prob_given_failure}} > 0$ which depends only on $u$ (and not on $B, R$ or $D$) so that the following is true.  Let $F$ be the event that there was a failure and let $A$ be any positive probability event for the LHPT restricted to one of the six sub-segments of the starting segment.  Then
\[\p(A  \giv F ) \le c_{\ref{lem:conditional_prob_given_failure}} \p(A).\]
\end{lemma}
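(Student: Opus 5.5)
The failure event is $F = E^c \cup \{\xi = 1\}$, where $\xi \sim \Ber(c_{\ref{prop:no_good_bubbles}}/\sqrt{B})$ is the auxiliary Bernoulli variable, independent of the LHPT. The plan is to bound $\p(A \cap F)$ from above and $\p(F)$ from below by comparable quantities.

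\emph{Lower bound on $\p(F)$.} Since $F \supseteq \{\xi=1\}$, we have $\p(F) \ge c_{\ref{prop:no_good_bubbles}}/\sqrt{B}$.

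\emph{Upper bound on $\p(A\cap F)$.} A union bound gives
\[\p(A \cap F) \le \p(A \cap \{\xi=1\}) + \p(A\cap E^c) = \frac{c_{\ref{prop:no_good_bubbles}}}{\sqrt{B}}\,\p(A) + \p(A \cap E^c),\]
using independence of $\xi$ from the LHPT. It therefore suffices to show $\p(A\cap E^c) \le C\,\p(A)\,(\eps + 1/\sqrt{B})$ for a constant $C$ depending only on $u$. Once this holds, dividing by $\p(F)$ and using $\eps \le 1/\sqrt{B}$ for $\eps$ small gives $\p(A\giv F)\le (1+2C/c_{\ref{prop:no_good_bubbles}})\p(A)$. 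The resulting constant depends only on $u$, because $c_{\ref{prop:no_good_bubbles}}$ does.

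\emph{Main step: $\p(A \cap E^c) \lesssim \p(A)(\eps+1/\sqrt{B})$.} Here $A$ concerns the LHPT in a neighbourhood of one sub-segment $I$ of length $n/6$, for instance $\CL$ restricted to times in $I$ at some bounded height. The difficulty is that $E^c$ depends on the whole window $[n/3,2n/3]$, which may overlap $I$. I would try two routes.

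First route: if the sub-segment lies outside the middle third $[n/3,2n/3]$, for example $[0,n/6]$ or $[5n/6,n]$, then the explorations for $E$ start after it or end before it. By the Markov property of the LHPT (independence of $\CL_{[0,s]}$ and $\CL_{[s,\infty)}$), together with the fact that bubbles which must be good die before time $n$, the events $A$ and $E$ are essentially independent, so $\p(A\cap E^c)=\p(A)\p(E^c)$. For the sub-segments meeting $[n/3,2n/3]$, only part of the window is affected by conditioning on $A$. I would rerun the renewal argument of \Cref{prop:no_good_bubbles} using only the bubbles started in the part of $[n/3,2n/3]$ of length at least $n/6$ that is disjoint from $I$, after the last time influenced by $I$. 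Concretely, start the sequence $S_k$ at the first grid time after the end of $I$ (or restrict to times before $I$ and use the left-to-right Markov property). Markov's inequality then gives failure probability $O(\eps+1/\sqrt{B})$, conditionally on $\CL$ up to that time, uniformly.

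The expected obstacle is the interaction between bubbles started before $I$ and the region of $I$. A bubble started to the left of $I$ may extend into it. I would handle this by observing that the sub-event ``no good bubble among those started in a fixed interval $J$ of length about $n/6$ disjoint from $I$'' contains $E^c$. Then I would choose $J$ to lie entirely to the right of $I$, where the Markov property makes its bubbles independent of $\CL$ up to the start of $J$, and hence of $A$. If instead the only available room lies to the left of $I$, I would condition on $\CL_{[0,\inf J]}$, apply the bound uniformly, and note that $A$ is measurable with respect to the right part, while the bubbles in $J$ have lifetime at most $R\eps n$ on the relevant events. The leftover overlap probability is a survival probability of order $B\eps^2$, which can be absorbed.
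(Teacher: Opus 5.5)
Your structure matches the paper's. The extra Bernoulli gives $\p(F)\ge c_{\ref{prop:no_good_bubbles}}/\sqrt B$, and the Bernoulli part of $\p(A\cap F)$ is exactly $\p(A)\,c_{\ref{prop:no_good_bubbles}}/\sqrt B$. You then replace $E^c$ by the absence of good bubbles at starting times in a sub-window whose goodness events are independent of $A$, and rerun the renewal/Markov argument of \Cref{prop:no_good_bubbles} on that window. Using one window of length about $n/6$ on a single side of $I=[a,b]$, instead of the paper's chain that skips over $J^+=[a-2R\eps n,b)$, is a harmless variant. Your treatment of $I\subseteq[0,n/3]$ and of a window lying to the right of $I$ is correct.

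The gap is the case where the room lies to the left of $I$. You cannot avoid it: it is forced for $I=[n/2,2n/3]$ and $I=[2n/3,5n/6]$, and your justification there does not work.

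\emph{Lifetimes are not bounded.} The lifetime is the coalescence time of the two bottom-most geodesics. It is unbounded for bad bubbles, and even for good ones only the merging time is at most $R\eps n$.

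\emph{The additive error cannot be absorbed.} An ``overlap probability of order $B\eps^2$'' entering additively cannot be absorbed into a bound of the form $c\,\p(A)$, because $A$ is an arbitrary positive-probability event. In the application in \Cref{prop:distortion_rectangle_lhpt}, $\p(A)=\p(A_{n/6,k})$ can be far smaller than $\eps^2$, so such a term would break the induction.

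\emph{Conditioning does not help.} Conditioning on $\CL_{[0,\inf J]}$ creates no independence from $A$, since both the window and $A$ lie to the right of $\inf J$.

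The missing observation is the one the paper uses: whether the bubble started at $t$ is good is determined by $\CL_{[t,t+2R\eps n]}$, regardless of its lifetime. Every condition of \Cref{def:bubble_good} concerns the explorations only up to time $R\eps n$ after $t$, plus paths of length at most $\eps^{2-4u}n$ or $\sqrt D\eps n$, which move at most that far in time.

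So take the window inside $[n/3,a-2R\eps n]$. The no-good-bubble event is then $\CL_{[0,a]}$-measurable and \emph{exactly} independent of $A$, with no error term. The window still has length at least $n/6-2R\eps n\ge n/7$. The renewal chain only needs to compute the unconditional probability of that event, not to be independent of $A$. It then gives $O(\eps+1/\sqrt B)$ with a constant depending only on $u$. The same observation, rather than ``good bubbles die before time $n$'', is what justifies your first route for $I=[5n/6,n]$.
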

\begin{proof}
\noindent\steplabel{1}{step:condfail_1}\emph{Step 1. Bayes' rule and a lower bound for $\p(F)$.} By Bayes' rule,
\[\p(A \giv F) = \frac{\p(F \giv A)}{\p(F)} \p(A).\]
As explained above, we have that
\[\p(F) \ge \frac{c_1}{\sqrt{B}}\]
(due to the added chance of a failure) where we can take $c_1 = c_{\ref{prop:no_good_bubbles}}$.

\noindent\steplabel{2}{step:condfail_2}\emph{Step 2. An event $E_J^c$ which contains $F$ up to the extra Bernoulli, and is independent of $A$.} Now given $A$, the extra information that we get is at most that there are no good bubbles on the particular sub-segment of the starting segment that we are working with. We formalize this as follows.  Write $J=[a,b]$ for the time window of the sub-segment to which $A$ refers, and set $J^+=[a-2R\eps n, b)$.  Whether the bubble started at a time $t$ is good is determined by the restriction of the LHPT to the time window $[t, t+2R\eps n]$.  All of the conditions in \Cref{def:bubble_good} concern the two explorations up to time $R\eps n$ after $t$ (the merging time is at most $R\eps n$ after $t$ when the bubble is good, by~\eqref{it:top_survival} of \Cref{def:top_good}, and the height bounds of~\eqref{it:bottom_height_bound} of \Cref{def:bottom_good} exactly up to that time) together with paths of length at most $\eps^{2-4u}n$ or $\sqrt{D}\eps n$ started within this range, which move at most that much in the time direction and so stay inside the stated window.  In particular this window is disjoint from $J$ when $t \notin J^+$.  Let $E_J$ be the event that some good bubble is started at a time $t \in [n/3,2n/3]\setminus J^+$, the analog of the event $E$ of \Cref{prop:no_good_bubbles} for this smaller collection of starting times.  Then $F \subseteq E_J^c \cup \{\text{the extra Bernoulli equals } 1\}$, so that
\[\p(F \giv A) \le \p(E_J^c \giv A)+\frac{c_{\ref{prop:no_good_bubbles}}}{\sqrt{B}}.\]
By the preceding paragraph $\one_{E_J^c}$ is the product of a function of $\CL_{[0,a]}$ and a function of $\CL_{[b,\infty)}$, whereas $\one_A$ is a function of $\CL_{[a,b]}$.  By the branching property of the skeleton, $\CL_{[a,\infty)}$ is independent of $\CL_{[0,a]}$ and $\CL_{[b,\infty)}$ is independent of $\CL_{[0,b]}$; conditioning successively, these two statements give the \emph{joint} independence of $\CL_{[0,a]}$, $\CL_{[a,b]}$ and $\CL_{[b,\infty)}$.  Consequently $E_J^c$ is independent of $A$, and $\p(E_J^c \giv A) = \p(E_J^c)$.

\noindent\steplabel{3}{step:condfail_3}\emph{Step 3. An upper bound for $\p(E_J^c)$.} To bound $\p(E_J^c)$ we run the chain $(S_k)$ from the proof of \Cref{prop:no_good_bubbles}, modified so as to skip over $J^+$: whenever the next starting time would fall in $J^+$, the next bubble is instead started at the first time of the form $j\eps^2n$ which is at least $b$.  Each $S_k$ is still a stopping time for the exploration of the LHPT from left to right, so by the strong Markov property the bubbles of the modified chain are still independent, with the increments distributed as before (the single skip contributes no increment).  On $E_J^c$ none of these bubbles started in $[n/3,2n/3]$ is good, so the first good one is started after time $2n/3$; as the only part of $[n/3,2n/3]$ which the chain does not traverse by means of bubble lifetimes is the skipped interval, of length at most $|J^+|+\eps^2n=n/6+2R\eps n+\eps^2n$, the accumulated increments must sum to at least
\[\frac{n}{3}-\frac{n}{6}-2R\eps n-\eps^2n \ge \frac{n}{7}\]
for $\eps$ sufficiently small.  Hence, in place of~\eqref{eq:markov_no_good_bubbles}, Markov's inequality gives
\[
\p(E_J^c)\le \p\!\left(\tau - \frac{n}{3} \ge \frac{n}{7}\right) \le \frac{7\E(\tau-n/3)}{n},
\]
with $\tau$ having the exact same distribution. Therefore the rest of the calculations in \Cref{prop:no_good_bubbles} are exactly the same, and we get the same bound up to a constant factor.

\noindent\steplabel{4}{step:condfail_4}\emph{Step 4. Conclusion.} So altogether, using~\eqref{eq:prob_no_good_bubbles} and $1/B+\eps^2 \le 1/\sqrt{B}+\eps$, for constants $c_2,c_3 > 0$ we have
\[\p(A \giv F ) \le \frac{2 c_2\left(\eps+ 1/\sqrt{B}\right)+ c_1/\sqrt{B}}{c_1/\sqrt{B}} \p(A)=\left(\frac{2c_2}{c_1}\left(\eps \sqrt{B}+1\right)+1\right)\p(A)\le c_3\p(A)\]
if $\eps$ is sufficiently small.
\end{proof}

\section{The $\ell^2$ distortion for the LHPT}
\label{sec:distortion LHPT}

We now use our work on the exploration process to obtain a distortion lower bound for certain subsets of the LHPT. Before we do so, we first need to introduce some terminology. Throughout this section we abbreviate the graph distance in the LHPT by $d = d_{\mathrm{gr}}^{\CL}$. For a geodesic segment from $a$ to $b$, we define the \emph{parallelogram} above this segment to be the set of points in $\Z^2$ whose time coordinate lies between those of $a$ and $b$, and whose height above the segment is at least $0$ and no more than $R\eps^2d(a,b)^2$. We denote this set by $P_{a,b}$.

Suppose $c$ and $c'$ are points on the \textit{top} geodesic of the same good bubble from the exploration of the segment from $a$ to $b$ (with $c'$ to the right of $c$). By
part~\eqref{it:top_survival} of \Cref{def:top_good}, we have $d(c,c') \le R\eps\, d(a,b)$, and by part~\eqref{it:max_height_top} of \Cref{def:top_good}  combined with part~\eqref{it:bottom_height_bound} of \Cref{def:bottom_good} (see the discussion following \Cref{def:bubble_good}), the heights of all points on the geodesic segment from $c$ to $c'$ (these points all lie on the top geodesic of the bubble) are at most $(R/2) \eps^2 d(a,b)^2$. It follows that the total height of the parallelogram $P_{c,c'}$ above the segment from $a$ to $b$ is at most
\[R\eps^2d(c,c')^2+(R/2)\eps^2 d(a,b)^2 \le (R^2\eps^2+1/2)\cdot R\eps^2 d(a,b)^2.\]
Thus if $\eps$ is small enough (namely once $R^2\eps^2 \le 1/2$), this last quantity is at most $R\eps^2d(a,b)^2$. Since, in addition, the time coordinates of the points of $P_{c,c'}$ lie between those of $c$ and $c'$, and hence between those of $a$ and $b$, and since the points of $P_{c,c'}$ lie weakly above the segment from $c$ to $c'$ and therefore weakly above the segment from $a$ to $b$, we conclude that $P_{c,c'} \subseteq P_{a,b}$.

On the other hand, suppose that $c$ and $c'$ are on the \textit{bottom} geodesic of the same good bubble from the exploration of the segment from $a$ to $b$, between the time at which the bubble starts and its merging time. Thus both $c$ and $c'$ are at height 0, and by the above calculations, the height of $P_{c,c'}$ is at most $R^3 \eps^4 d(a,b)^2$: indeed, $d(c,c')$ is at most the merging time of the bubble, which is at most $R\eps\, d(a,b)$ by part~\eqref{it:top_survival} of \Cref{def:top_good}, so that the height $R\eps^2 d(c,c')^2$ of $P_{c,c'}$ is at most $R^3\eps^4 d(a,b)^2$. In particular, $P_{c,c'}$ is entirely contained in the bottom exploration of this good bubble (by part~\eqref{it:bottom_height_bound} of \Cref{def:bottom_good}, the height of the bottom exploration stays above $R^3\eps^4 d(a,b)^2$ up to time $R\eps\,d(a,b)$, and in particular up to the merging time), and \Cref{lem:conditional_prob_given_good} applies to any event  which is measurable with respect to the restriction of the LHPT to $P_{c,c'}$. This will be crucial later.

Now, recall that for a metric space $X$, the $\ell^2$ distortion of $X$ is the infimum over all $L \ge 1$ for which there exists a function $f: X \to \ell^2$ such that for all $x,y \in X$,
\begin{equation}\label{eq:distortion_def}
d(x,y) \le \|f(x)-f(y)\|_2 \le L d(x,y).
\end{equation}
For technical reasons that we explain below, we will restrict our attention to a smaller class of functions.
\begin{definition}\label{def:L-eps_admissible}
Let $a \in \Z_{\ge 0} \times \Z$, and $b$ be a point on the bottom-most geodesic started from $a$, and let $L \ge 1$ and $\eps>0$. We will say that a function $f: P_{a,b} \to \ell^2$ is $(L,\eps)$-\emph{admissible} if the following conditions hold. See \Cref{fig:exploration around good bubble}.
\begin{enumerate}[(i)]
\item\label{it:adm_lipschitz} For every $x,y \in P_{a,b}$ which are connected to each other by a path entirely contained in $P_{a,b}$,
\[\|f(x)-f(y)\|_2 \le L\, d_{P_{a,b}}(x,y),\]
where $d_{P_{a,b}}(x,y)$ denotes the minimal length of such a path; and for every $x,y \in P_{a,b}$ such that $y$ lies on the bottom-most geodesic started from $x$ and the geodesic segment from $x$ to $y$ is entirely contained in $P_{a,b}$,
\[d(x,y) \le \|f(x)-f(y)\|_2.\]
\item\label{it:adm_bubble_start} For every bubble which is good relative to a starting segment of length $m$ (that is, a bubble arising from the bottom and top explorations started at a point of a geodesic segment of length $m$, which is good in the sense of \Cref{def:bubble_good} with $m$ in place of $n$) and which is entirely contained in $P_{a,b}$, if $a_1$ is the starting point of the bubble and $a_1'$ is the point right above the $B\eps^4m^2$ trees at the start of the bubble (that is, the starting point of the top geodesic of the bubble, which lies at height $B\eps^4m^2$ above $a_1$), then
\[\|f(a_1)-f(a_1')\|_2 \le 2L\eps^{2-u}m.\]
\item\label{it:adm_bubble_merge} For every bubble which is good relative to a starting segment of length $m$ and which is entirely contained in $P_{a,b}$, if $b_1'$ is the merging vertex, and $b_1$ is the point on the starting segment at the same time, then
\[\|f(b_1)-f(b_1')\|_2 \le L\eps^{2-4u}m.\]
\item\label{it:adm_bubble_middle} For every bubble which is good relative to a starting segment of length $m$ and which is entirely contained in $P_{a,b}$, if $c_1$ is the point from part~\eqref{it:middle_graph_distance} of \Cref{def:bubble_good}, and $c_2$ is the point on the starting segment at the same time, then
\[\|f(c_1)-f(c_2)\|_2 \ge \sqrt{D}\eps m.\]
\end{enumerate}
\end{definition}
If $f$ satisfies~\eqref{eq:distortion_def} for all $x,y \in P_{a,b}$, then in particular it is $(L,\eps)$-admissible. Indeed,
\begin{enumerate}[(i)]
\item For the first inequality in~\eqref{it:adm_lipschitz}, if $x$ and $y$ are joined by a path contained in $P_{a,b}$, then $d(x,y) \le d_{P_{a,b}}(x,y)$, so that by~\eqref{eq:distortion_def},
\[\|f(x)-f(y)\|_2 \le L d(x,y) \le L\,d_{P_{a,b}}(x,y).\]
For the second inequality, if $y$ lies on the bottom-most geodesic started from $x$, then~\eqref{eq:distortion_def} directly gives $\|f(x)-f(y)\|_2 \ge d(x,y)$.
\item If $a_1$ and $a_1'$ are as in~\eqref{it:adm_bubble_start}, then by part~\eqref{it:bottom_distance_initial} of \Cref{def:bottom_good} as well as part~\eqref{it:initial_distance} of \Cref{def:top_good} (the two conditions produce paths of length at most $\eps^{2-u}m$ from $a_1$ and from $a_1'$ to the common point separating the two explorations at the start of the bubble, and we concatenate them),
\[\|f(a_1)-f(a_1')\|_2 \le Ld(a_1, a_1') \le 2L\eps^{2-u}m.\]
\item If $b_1$ and $b_1'$ are as in~\eqref{it:adm_bubble_merge}, then by part~\eqref{it:merge_distance} of \Cref{def:bubble_good},
\[\|f(b_1)-f(b_1')\|_2 \le Ld(b_1,b_1') \le L\eps^{2-4u}m.\]
\item If $c_1$ and $c_2$ are as in~\eqref{it:adm_bubble_middle}, then by part~\eqref{it:middle_graph_distance} of \Cref{def:bubble_good},
\[\|f(c_1)-f(c_2)\|_2 \ge d(c_1,c_2) \ge \sqrt{D}\eps m.\]
\end{enumerate}
This means that if we can show no $(L,\eps)$-admissible function exists on $P_{a,b}$, then no function satisfies~\eqref{eq:distortion_def} for all $x,y \in P_{a,b}$, and therefore the $\ell^2$ distortion of $P_{a,b}$ must be at least $L$.

The reason for making this technical definition is that admissibility is determined by the restriction of the LHPT to $P_{a,b}$, but the condition that~\eqref{eq:distortion_def} holds for all $x$ and $y$ in $P_{a,b}$ is not.  Likewise, all of the quantities appearing in conditions~\eqref{it:adm_bubble_start} and~\eqref{it:adm_bubble_merge} concern only heights, merging times and distances \emph{inside} the explorations of the bubble in question, and are therefore determined by the restriction of the LHPT to any region containing that bubble.  The same for the definition of a good bubble: conditions~\eqref{it:bottom_good}--\eqref{it:merge_distance} of \Cref{def:bubble_good} are of the kind just described, and~\eqref{it:middle_graph_distance}, which does refer to the graph distance in the whole of $\CL$, has been formulated as a confinement statement precisely so that it too is determined by the restriction of $\CL$ to a bounded region (namely, by the discussion following \Cref{def:bubble_good}, to the rectangle of times within $\sqrt{D}\eps m$ of the middle of the bubble and heights in $[0,R\eps^2m^2]$ above its starting segment).  That rectangle is contained in $P_{a,b}$ whenever the bubble is: its time range is contained in that of the bubble, since $\sqrt{D} \le 1/2$ and the bubble has merging time at least $\eps m$, and its height is at most $R\eps^2m^2 \le R\eps^2 d(a,b)^2$.  Consequently admissibility, and with it each of the events $\{\chi^{(j)}>t\}$ appearing below, is determined by the restriction of the LHPT to $P_{a,b}$, which is what the argument of \Cref{prop:distortion_rectangle_lhpt} requires.
For a geodesic segment from $a$ to $b$, $L \ge 1$, and $\eps>0$, let
\[\chi_{L,\eps}(a,b)=\sup\left\{\frac{\|f(a)-f(b)\|_2}{d(a,b)} \text{ s.t.\ } f : P_{a,b} \to \ell^2 \text{ is } (L,\eps) \text{-admissible} \right\}.\]
If no such function exists, we instead set $\chi_{L,\eps}(a,b)=-\infty$ (the supremum of the empty set).  This convention matters.  Indeed, below we will consider events of the form $\{\chi_{L,\eps}(a,b)>t\}$ with $t \le 0$, and with the convention $\chi_{L,\eps}(a,b)=-\infty$ such an event asserts the \emph{existence} of an $(L,\eps)$-admissible function whose associated ratio exceeds $t$, which is the intended meaning (with the value $0$ in place of $-\infty$, the event would instead hold vacuously whenever no admissible function exists, and the tail bound~\eqref{eq:tail_bound_chi} below would fail for $k > L/s$). Note of course that if an $(L,\eps)$-admissible function exists, then $1\le \chi_{L,\eps}(a',b') \le L$ for any geodesic segment from $a'$ to $b'$ inside $P_{a,b}$ satisfying $P_{a',b'} \subseteq P_{a,b}$ (the restriction of an admissible function to $P_{a',b'}$ is admissible, as explained in the proof of \Cref{lem:laakso_diamond} below, and the two bounds then follow from condition~\eqref{it:adm_lipschitz} applied to the pair $a',b'$, whose connecting segment is contained in $P_{a',b'}$). Our strategy is then as follows.  Assuming an $(L,\eps)$-admissible function exists, we can use the exploration process to gradually produce a sequence of pairs of points for which $\chi_{L,\eps}$ becomes smaller and smaller, until we find a pair for which $\chi_{L,\eps}$ is less than 1. This is a contradiction, and thus no $(L,\eps)$-admissible function could have existed in the first place. Therefore by the above discussion, the distortion of $P_{a,b}$ must be at least $L$.

\begin{lemma}[Laakso diamond structures]\label{lem:laakso_diamond}
Suppose that we have a good bubble on a starting segment of length $n$, and that the bubble is entirely contained in the parallelogram $P_{a,b}$ defined below.  Let $a$, $a_1$, $a_1'$, $b_1$, $b_1'$, $c_1$, $c_2$, $b$ be the following points:
\begin{itemize}
\item $a=(0,0)$ and $b=(n,0)$.
\item $a_1$ is the starting point of the good bubble on the starting segment.
\item $b_1'$ is the merging vertex, and $b_1$ is the point on the starting segment at the same time.
\item $a_1'$ is the point right above the $B\eps^4n^2$ trees at the start of the bubble, i.e., the starting point of its top geodesic.
\item $c_1$ is the point from part~\eqref{it:middle_graph_distance} of \Cref{def:bubble_good}, and $c_2$ is the point on the starting segment with the same time coordinate.
\end{itemize}
See \Cref{fig:exploration around good bubble}. Let
\begin{align*}
&\chi^{(1)}=\chi_{L,\eps}(a,a_1) \qquad \chi^{(4)}=\chi_{L,\eps}(c_1,b_1')\\
&\chi^{(2)}=\chi_{L,\eps}(b_1,b) \qquad \chi^{(5)}=\chi_{L,\eps}(a_1,c_2)\\
&\chi^{(3)}=\chi_{L,\eps}(a_1',c_1) \qquad \chi^{(6)}=\chi_{L,\eps}(c_2,b_1).
\end{align*}
Then
\[\chi_{L,\eps}(a,b) \le \max(\chi^{(1)}, \dots, \chi^{(6)})+L\beta-\frac{\eta}{L},\]
where
\[\beta= 2\eps^{2-4u} \quad\text{and}\quad \eta=\frac{D\eps}{16R}.\]
\end{lemma}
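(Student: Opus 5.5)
The plan is to run the standard ``short diagonal'' argument for Laakso/diamond graphs in Hilbert space. The two routes from $a_1$ to $b_1$ (along the starting segment via $c_2$, and along the top geodesic via $a_1'$, $c_1$, $b_1'$) have the same length and the same time split. Their midpoints $c_2$ and $c_1$ are nevertheless far apart under $f$ by condition~\eqref{it:adm_bubble_middle}. Uniform convexity of $\ell^2$ then forces $\|f(a_1)-f(b_1)\|_2$ to be strictly shorter than the sum of the segment bounds.

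Fix an $(L,\eps)$-admissible $f$ on $P_{a,b}$ (if none exists, the left side is $-\infty$ and there is nothing to prove). Write $M=\max(\chi^{(1)},\dots,\chi^{(6)})$.

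\emph{Step 1: restrictions and segment bounds.} First check that the restriction of $f$ to each of the six parallelograms $P_{a,a_1}$, $P_{b_1,b}$, $P_{a_1',c_1}$, $P_{c_1,b_1'}$, $P_{a_1,c_2}$, $P_{c_2,b_1}$ is admissible. These parallelograms are contained in $P_{a,b}$ by the discussion at the start of this section. Each condition of \Cref{def:L-eps_admissible} is inherited, because paths, segments and bubbles inside the smaller set also lie in $P_{a,b}$. Hence, by the definition of $\chi_{L,\eps}$ as a supremum, $\|f(x)-f(y)\|_2\le M\,d(x,y)$ for each of the six pairs.

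Record the distances. Write $s_1=\eps n/2$ and $s_2=\ell-\eps n/2$, where $\ell-t(a_1)$ is the merging time measured from $a_1$ (I shift time so that $a_1$ has time $0$). Then $d(a_1,c_2)=d(a_1',c_1)=s_1$ and $d(c_2,b_1)=d(c_1,b_1')=s_2$, since both routes are geodesics. Also $s_1+s_2\le R\eps n$, and $d(a,a_1)+s_1+s_2+d(b_1,b)=n$.

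\emph{Step 2: two approximate paths from $x:=f(a_1)$ to $z:=f(b_1)$.} By~\eqref{it:adm_bubble_start} and~\eqref{it:adm_bubble_merge}, $\|f(a_1')-x\|_2\le 2L\eps^{2-u}n$ and $\|f(b_1')-z\|_2\le L\eps^{2-4u}n$. Both are at most $L\beta n/2$ for $\eps$ small, as $\eps^{2-u}\le\eps^{2-4u}$. Set $y=f(c_2)$ and $y'=f(c_1)$. Then
\[
\|x-y\|\le Ms_1,\quad \|y-z\|\le Ms_2,\quad \|x-y'\|\le Ms_1+L\beta n/2,\quad \|y'-z\|\le Ms_2+L\beta n/2,
\]
while $\|y-y'\|\ge\sqrt D\eps n$ by~\eqref{it:adm_bubble_middle}. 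Let $w=x+\tfrac{s_1}{s_1+s_2}(z-x)$ be the weighted chord point. By the triangle inequality, one of $y,y'$ is at distance at least $\sqrt D\eps n/2$ from $w$.

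\emph{Step 3: the main obstacle, a quantitative weighted short-diagonal inequality.} I need a lemma of the following form. Let $\|x-v\|\le A_1$ and $\|v-z\|\le A_2$ in a Hilbert space, with $\|v-w\|\ge\delta$, where $w$ is the point dividing $[x,z]$ in the ratio $A_1:A_2$. Then
\[
\|x-z\|\le A_1+A_2-\frac{\delta^2}{2(A_1+A_2)}.
\]
I would prove this by decomposing $v-x$ into components parallel and orthogonal to $z-x$. Expanding the squares and using $\sqrt{a^2+h^2}\ge a+h^2/(2\sqrt{a^2+h^2})$, or directly the weighted parallelogram identity
\[
\|x-z\|^2=\tfrac{A_1+A_2}{A_1}\|x-v\|^2+\tfrac{A_1+A_2}{A_2}\|v-z\|^2-\tfrac{(A_1+A_2)^2}{A_1A_2}\|v-w\|^2,
\]
gives the bound. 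Since $s_2\ge s_1$ up to constants, the ratio weights stay controlled. The extra $L\beta n/2$ in $A_i$ for $v=y'$ slightly shifts the weighted point $w$; this costs only $O(L\beta n)$, which I absorb into $\delta$ or into the final error. I expect the careful bookkeeping of this weighted, perturbed version to be the delicate point.

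\emph{Step 4: assembly.} Apply Step~3 with $A_i\le Ms_i+L\beta n/2$, $\delta=\sqrt D\eps n/2$ and $A_1+A_2\le 2R\eps n$, using $M\le L$. This gives
\[
\|x-z\|\le M(s_1+s_2)+L\beta n-\frac{D\eps^2n^2/4}{4LR\eps n}.
\]
Here I use $A_1+A_2\le LR\eps n+L\beta n\le 2LR\eps n$, so the deficit is at least $D\eps n/(16RL)=\eta n/L$. Then
\[
\|f(a)-f(b)\|_2\le M d(a,a_1)+\|x-z\|+M d(b_1,b)\le Mn+L\beta n-\eta n/L.
\]
Dividing by $d(a,b)=n$ and taking the supremum over $f$ gives the claim.
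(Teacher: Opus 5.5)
Your argument is correct, but it extracts the uniform-convexity gain by a different mechanism from the paper's.

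\emph{How the paper does it.} The paper never compares $f(c_1)$ and $f(c_2)$ with a point of the chord $[f(a_1),f(b_1)]$. It inserts the midpoint $m=\tfrac12(f(c_1)+f(c_2))$ and treats the two legs separately.
\begin{itemize}
\item It bounds $\|f(a_1)-m\|_2$ by the plain triangle inequality, averaging the routes $a_1\to c_2$ and $a_1\to a_1'\to c_1$. This gives $M\,d(a_1,c_2)+\tfrac12 L\beta n$ with no deficit.
\item It obtains the deficit on the second leg only, from the ordinary parallelogram law $\|f(b_1)-m\|_2^2=\tfrac12\|f(b_1)-f(c_1)\|_2^2+\tfrac12\|f(b_1)-f(c_2)\|_2^2-\tfrac14\|f(c_1)-f(c_2)\|_2^2$, followed by $\sqrt{\lambda-\mu}\le\sqrt{\lambda}-\mu/(2\sqrt{\lambda})$.
\end{itemize}
This needs no weighted identity and no pigeonhole. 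The perturbations from conditions~\eqref{it:adm_bubble_start} and~\eqref{it:adm_bubble_merge} simply add, so the ``delicate point'' you anticipate never arises.

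\emph{What your route gives.} You run the classical short-diagonal argument over the whole stretch $a_1\to b_1$. It is somewhat more robust, since it does not care about the split ratio $s_1:s_2$, and your remark that $s_2\ge s_1$ ``up to constants'' is unnecessary because $A_1A_2\le S^2/4$. In fact the weighted identity gives $\|x-z\|_2\le S\sqrt{1-\delta^2/(A_1A_2)}\le S-2\delta^2/S$, which is four times better than the $\delta^2/(2S)$ you state. The price is the bookkeeping about which weights define $w$.

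\emph{The weight mismatch is easy to remove.} In Step~2, define $w$ using the weights $A_1:A_2=(Ms_1+\tfrac12L\beta n):(Ms_2+\tfrac12L\beta n)$ instead of $s_1:s_2$.
\begin{itemize}
\item The point $y=f(c_2)$ also satisfies the weaker bounds $\|x-y\|_2\le A_1$ and $\|y-z\|_2\le A_2$.
\item So the pigeonhole $\|y-w\|_2+\|w-y'\|_2\ge\sqrt{D}\eps n$ applies to this $w$.
\item Step~3 then applies to whichever of $y,y'$ is far from $w$, with no shift, taking $\delta=\sqrt{D}\eps n/2$ and $S=M(s_1+s_2)+L\beta n\le 2LR\eps n$.
\item Your Step~4 computation then goes through verbatim and yields exactly the deficit $\eta n/L$.
\end{itemize}

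\emph{If you keep your original $w$.} The shift is at most $\tfrac12 L\beta n$, and you need a case split.
\begin{itemize}
\item If $L\beta\le\sqrt{D}\eps/2$, use the sharp constant $2\delta'^2/S$ with $\delta'\ge\sqrt{D}\eps n/4$.
\item Otherwise, $L\ge1$ and $\sqrt{D}/2\ge D/(16R)$ give $L\beta\ge\eta/L$, so the trivial bound $\|x-z\|_2\le M(s_1+s_2)$ already suffices.
\end{itemize}

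Finally, the first bound in your Step~4 should read $A_1+A_2\le 2LR\eps n$.
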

\begin{figure}
\includegraphics[scale=0.8]{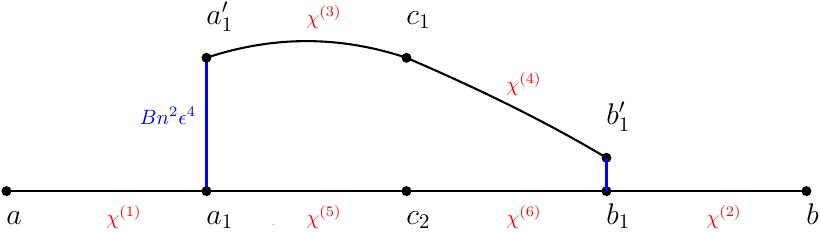}
\caption{
The six segments around a good bubble used in the exploration analysis. Here the bubble starts at $a_1$ and has initial height $B\eps^4n^2$. The top exploration merges with the bottom exploration at $b_1'$. The distance between $c_1$ and $c_2$ is at least $\sqrt{D}\eps n$, by the definition of ``good''.}
\label{fig:exploration around good bubble}
\end{figure}
\begin{proof}
\noindent\steplabel{1}{step:laakso_1}\emph{Step 1. Restrictions of admissible functions.} If no $(L,\eps)$-admissible function $P_{a,b} \to \ell^2$ exists, then $\chi_{L,\eps}(a,b)=-\infty$ and the claim holds trivially.  So suppose that at least one exists. Set $M=\max(\chi^{(1)}, \dots, \chi^{(6)})$ and let $f$ be an $(L,\eps)$-admissible function. By the remarks above, all six of the parallelograms $P_{a,a_1}$, $P_{b_1,b}$, $P_{a_1',c_1}$, $P_{c_1,b_1'}$, $P_{a_1,c_2}$, $P_{c_2,b_1}$ are contained in $P_{a,b}$. Also, each of these segments is either contained in the segment from $a$ to $b$, or is on the top geodesic of a good bubble; in particular, in each case the right endpoint lies on the bottom-most geodesic started from the left endpoint, so that the six parallelograms are of the form to which \Cref{def:L-eps_admissible} applies.  These observations imply that the restriction of $f$ to any of these six parallelograms is also an $(L,\eps)$-admissible function.  Indeed, for condition~\eqref{it:adm_lipschitz}, a path contained in one of the six parallelograms is contained in $P_{a,b}$ (so the minimal path length can only decrease when computed in $P_{a,b}$, which strengthens the required upper bound), and a geodesic segment contained in one of them is contained in $P_{a,b}$; and for conditions~\eqref{it:adm_bubble_start}--\eqref{it:adm_bubble_middle}, a good bubble contained in one of the six parallelograms is contained in $P_{a,b}$, so the corresponding requirements on $f$ are among those already imposed.  In particular, since an admissible function exists on $P_{a,b}$, one exists on each of the six parallelograms, and hence $1 \le \chi^{(j)} \le L$ for all $j$, so that $1 \le M \le L$.

\noindent\steplabel{2}{step:laakso_2}\emph{Step 2. Decomposition into four terms.} By the triangle inequality, We have
\begin{align*}
\|f(a)-f(b)\|_2 &\le \|f(a)-f(a_1)\|_2+\left\|f(a_1)-\frac{f(c_1)+f(c_2)}{2}\right\|_2\\
&\phantom{=}+\left\|\frac{f(c_1)+f(c_2)}{2}-f(b_1)\right\|_2+\|f(b_1)-f(b)\|_2\\
&=:T_1+T_2+T_3+T_4.
\end{align*}
We now estimate each of these four terms below.

\noindent\steplabel{3}{step:laakso_3}\emph{Step 3. The terms $T_1$, $T_2$ and $T_4$.} For the first and fourth terms, we have
\[T_1 \le \chi^{(1)}d(a,a_1) \le Md(a,a_1)\]
and
\[T_4 \le \chi^{(2)}d(b_1,b) \le Md(b_1,b).\]
For the second term, we write 
\[f(a_1)-\frac{f(c_1)+f(c_2)}{2}=\frac12\big[f(a_1)-f(a_1')\big]+\frac12\big[f(a_1')-f(c_1)\big]+\frac12\big[f(a_1)-f(c_2)\big]\] 
and apply the triangle inequality, which gives
\begin{align*}
T_2 &\le \frac{\|f(a_1)-f(a_1')\|_2}{2}+\frac{\|f(a_1')-f(c_1)\|_2}{2}+\frac{\|f(a_1)-f(c_2)\|_2}{2}\\
&\le \frac{\|f(a_1)-f(a_1')\|_2}{2}+\frac{M}{2}d(a_1',c_1)+\frac{M}{2}d(a_1,c_2).
\end{align*}
Since $f$ is $(L,\eps)$-admissible (namely, by condition~\eqref{it:adm_bubble_start} of \Cref{def:L-eps_admissible}, applied to our good bubble, which is contained in $P_{a,b}$ by hypothesis),
\[\frac{\|f(a_1)-f(a_1')\|_2}{2} \le L\eps^{2-u}n\le \frac{L\beta}{2} d(a,b),\]
where the second inequality holds because $\eps^{2-u} \le \eps^{2-4u}$ and $d(a,b)=n$.
Furthermore, $d(a_1',c_1)=d(a_1,c_2)$. This is because $a_1'$ and $a_1$ have the same time coordinate, $c_1$ and $c_2$ have the same time coordinate, $c_1$ is on the bottom-most geodesic started from $a_1'$ and $c_2$ is on the bottom-most geodesic started from $a_1$, so that both distances are equal to the common difference of the time coordinates. Altogether,
\[T_2\le \frac{L\beta}{2} d(a,b)+Md(a_1,c_2).\]

\noindent\steplabel{4}{step:laakso_4}\emph{Step 4. The term $T_3$ and the parallelogram law.} Next, for $T_3$, we have by the parallelogram law,
\begin{equation}\label{eq:parallelogram_law}
\left\|f(b_1)-\frac{f(c_1)+f(c_2)}{2}\right\|_2^2=\frac{\|f(b_1)-f(c_1)\|_2^2}{2}+\frac{\|f(b_1)-f(c_2)\|_2^2}{2}-\frac{\|f(c_1)-f(c_2)\|_2^2}{4}.
\end{equation}
Now, since $f$ is $(L,\eps)$-admissible (the first term below is bounded using the triangle inequality together with condition~\eqref{it:adm_bubble_merge} of \Cref{def:L-eps_admissible}, and the second using the definition of $\chi^{(4)}$ applied to the restriction of $f$ to $P_{c_1,b_1'}$), we have
\begin{align*}
\frac{\|f(b_1)-f(c_1)\|_2^2}{2} &\le \frac{1}{2} \left( \|f(b_1)-f(b_1')\|_2+\|f(b_1')-f(c_1)\|_2 \right)^2\\
&\le \frac{1}{2}\left(L\eps^{2-4u}n+Md(b_1',c_1)\right)^2\\
&= \frac{1}{2}\left(L\eps^{2-4u}n+Md(b_1,c_2)\right)^2.
\end{align*}
In the last line, we used the fact that $b_1$ and $b_1'$ have the same time coordinates, and likewise $c_1$ and $c_2$ have the same time coordinates. So because $b_1'$ is on the bottom-most geodesic started from $c_1$, and $b_1$ is on the one started from $c_2$, we have $d(b_1',c_1)=d(b_1,c_2)$.

We also have
\[\frac{\|f(b_1)-f(c_2)\|_2^2}{2}  \le \frac{(M d(b_1,c_2))^2}{2} \le \frac{1}{2}\left(L\eps^{2-4u}n+Md(b_1,c_2)\right)^2,\]
and by~\eqref{it:adm_bubble_middle} of \Cref{def:L-eps_admissible},
\[\frac{\|f(c_1)-f(c_2)\|_2^2}{4} \ge \frac{D\eps^2n^2}{4}.\]
Therefore \eqref{eq:parallelogram_law} reduces to
\[T_3 \le \sqrt{(L\eps^{2-4u}n+Md(b_1,c_2))^2-\frac{D\eps^2n^2}{4}}.\]
Note that the quantity under the square root is automatically non-negative, since by the above it is an upper bound for $T_3^2$.
The inequality
\[\sqrt{\lambda-\mu} \le \sqrt{\lambda}-\frac{\mu}{2\sqrt{\lambda}}\]
holds for any $\lambda>0$ and $\mu\le\lambda$, by concavity of the function $\mu \mapsto \sqrt{\lambda-\mu}$. Applying it with $\lambda=(L\eps^{2-4u}n+Md(b_1,c_2))^2$ and $\mu=D\eps^2n^2/4$, we get
\[T_3 \le L\eps^{2-4u}n+Md(b_1,c_2)-\frac{D\eps^2n^2}{8(L\eps^{2-4u}n+Md(b_1,c_2))}.\]
The distance between $b_1$ and $c_2$ is at most $R\eps n$ by part~\eqref{it:top_survival} of \Cref{def:top_good}. We have $M \le L$, and $\eps^{2-4u} n \le R \eps n$ for $\eps$ sufficiently small. These observations imply that
\[\frac{D\eps^2n^2}{8(L\eps^{2-4u}n+Md(b_1,c_2))} \ge \frac{D\eps^2n^2}{8(L\eps^{2-4u}n+LR\eps n)} \ge \frac{D\eps n}{16LR}.\]
So overall,
\[T_3 \le L\eps^{2-4u}n+Md(b_1,c_2)-\frac{D\eps n}{16LR}= \frac{L\beta}{2}d(a,b)+Md(b_1,c_2)-\frac{\eta}{L}d(a,b).\]

\noindent\steplabel{5}{step:laakso_5}\emph{Step 5. Conclusion.} Putting all these estimates together, we thus obtain
\[\|f(a)-f(b)\|_2 \le M( d(a,a_1)+d(a_1,c_2)+d(c_2,b_1)+d(b_1,b))+L\beta d(a,b)-\frac{\eta}{L}d(a,b).\]
The path $a \to a_1 \to c_2 \to b_1 \to b$ is a geodesic from $a$ to $b$, so the term in parentheses is exactly $d(a,b)$. Hence, we finally obtain
\[\|f(a)-f(b)\|_2 \le \left(M+L\beta-\frac{\eta}{L}\right)d(a,b).\]
Since $f$ was an arbitrary $(L,\eps)$-admissible function, the result follows.
\end{proof}

We can now obtain a lower bound for the $\ell^2$ distortion of a rectangle in $\Z^2$, when equipped with the graph distance in the LHPT.
\begin{proposition}\label{prop:distortion_rectangle_lhpt}
Let $Q_n=[0,n]\times [0,n^2]$, equipped with the LHPT graph metric. Then for any $0<\delta<1/4$,
\[\p(\Dis(Q_n) \ge (\log n)^{1/4-\delta}) \to 1\]
as $n \to \infty$.
\end{proposition}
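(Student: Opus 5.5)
The approach is to prove, by a multiscale induction, a tail bound on $\chi_{L,\eps}(a,b)$ for the starting segment $a=(0,0)$, $b=(n,0)$. We then show that, with high probability, $\chi_{L,\eps}(a,b)<1$ when $L=(\log n)^{1/4-\delta}$. Since any $(L,\eps)$-admissible function has $\chi\ge1$, no admissible function exists, and hence $\Dis(Q_n)\ge\Dis(P_{a,b})\ge L$ (note $P_{a,b}\subseteq Q_n$ since $R\eps^2\le 1$).

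The parameters are chosen as follows. Fix $u,R,D,B$ as in \Cref{fn:parameters}. Take $\eps$ as a function of $n$ with $n=e^{\eps^{-2}}$, or slightly smaller, so that the scales $n,\ n\eps, \dots$ down to $\eps^{-4}$ give roughly $k\approx \log n/\log(1/\eps)$ levels. By \Cref{lem:laakso_diamond}, each good bubble lowers the bound on $\chi$ by $\eta/L - L\beta\approx D\eps/(16RL)$, since $\beta=2\eps^{2-4u}$ is negligible as long as $L^2\ll\eps^{-1+4u}$. A failure costs nothing, because we split into six sub-segments and the maximum over them is no worse. Starting from $\chi\le L$, we need about $L^2/\eps$ successful levels along every branch of the recursion tree to reach $\chi<1$. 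The number of levels is about $\log n/\log(1/\eps)$, and along any branch the scale drops by a factor $\gtrsim\eps$ per good bubble, but only by a factor $6$ per failure, where failures occur with probability $\lesssim B^{-1/2}$. Balancing $L^2\eps^{-1}\cdot\log(1/\eps)\lesssim\log n$ with $\eps$ as small as the constraint $L^2\ll \eps^{-1}$ allows gives $L^4\lesssim\log n$ up to logarithmic factors, which is the source of the exponent $1/4-\delta$. Concretely, I would take $\eps=(\log n)^{-1/2+\delta'}$ and $L=(\log n)^{1/4-\delta}$.

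The core is a stochastic domination statement proved by induction on scale. Define $s=\eta/(2L)$. The claim is that for a segment of length $m$ and integers $j\ge0$,
\[\p\big(\chi_{L,\eps}(a,b)>L-js\big)\le \rho^{\,j'}\]
with $j'$ tied to the remaining number of scales; equivalently, $(L-\chi)/s$ dominates a suitable count of good bubbles along the worst branch. At each step, explore the starting segment. On the event of a good bubble, apply \Cref{lem:laakso_diamond} to bound $\chi(a,b)$ by the maximum of six $\chi^{(i)}$'s minus $s$. Then union bound over the six. For the pieces below the bubble ($\chi^{(5)},\chi^{(6)}$), use \Cref{lem:conditional_prob_given_good}, summing over $\ell$ with \eqref{eq:good_top_decomposed}, to decouple them from the conditioning. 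For the pieces on top and to the sides, use the spatial Markov property of the LHPT. On a failure, use \Cref{lem:conditional_prob_given_failure} to compare conditional and unconditional laws on each of the six sub-segments at cost $c_{\ref{lem:conditional_prob_given_failure}}$. The union bound over six children, together with the constant factors, must be beaten by the per-level gain. This is arranged by making $B$ large, so that failures have probability $O(B^{-1/2})$ by \Cref{prop:no_good_bubbles}, which forces the branching-process-type recursion to be subcritical. The recursion terminates at scale $\eps^{-4}$, using the trivial bound $\chi\le L$.

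The main obstacle is this induction. Making the recursion close requires choosing the right quantity to propagate, for example the tail $\p(\chi>L-ks)$ bounded by $C^{-1}\theta^k$ for suitable constants. It also requires that the 6-fold union bound and the conditioning constants are absorbed by the geometric decay in $k$, uniformly over the $\approx\log n/\log(1/\eps)$ scales, while keeping all constants independent of $\eps$. Measurability is needed so that the events $\{\chi^{(j)}>t\}$ depend only on the LHPT restricted to the corresponding parallelogram, as discussed after \Cref{def:L-eps_admissible}; this is what lets us invoke the independence lemmas. Finally, letting $n\to\infty$ gives probability $\to1$ that $\chi_{L,\eps}(a,b)=-\infty$, and hence $\Dis(Q_n)\ge(\log n)^{1/4-\delta}$.
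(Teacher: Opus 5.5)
Your architecture matches the paper's. You prove a tail bound for $\chi_{L,\eps}$ by induction on the length of the starting segment, using \Cref{lem:laakso_diamond} on good bubbles, a split into six sub-segments on failures, and \Cref{lem:conditional_prob_given_good,lem:conditional_prob_given_failure} for decoupling, with $L\approx\eps^{-1/2}$ and $\log n\approx\eps^{-2}$. Your slightly different choices $\eps=(\log n)^{-1/2+\delta'}$ and $s=\eta/(2L)$ also work. The gap is in the induction you defer, which is where the proof actually lives, and the mechanism you give for closing it is not right.

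Making $B$ large does not by itself make the recursion subcritical. To apply \Cref{lem:conditional_prob_given_good} you union bound over the location $(x,\ell)$ of the good bubble. By \eqref{eq:good_top_decomposed} this costs $\sum_{x,\ell}\p(G^1_{x,\ell})\le\eps^{-2}\cdot B\eps^2=B$, times $6$ for the children. So large $B$ shrinks the failure branch, whose probability is $O(B^{-1/2}+\eps)$, but inflates the good-bubble branch. A bound of the form $C^{-1}\theta^k$ that does not track the scale cannot absorb both. Your alternative $\rho^{j'}$, with $j'$ tied to the remaining scales, has the right shape, but the weighting is the crux. The paper proves, for all $n$ and $k$,
\[\p\big(\chi_{L,\eps}((0,0),(n,0))>L-ks\big)\le\rho^{g(n)-k},\qquad g(n)=\tfrac14\Big\lfloor\tfrac{\log n}{\log(2/\eps)}\Big\rfloor.\]
The base cases are $k=0$ and $n\le16\eps^{-4}$, where the right-hand side is at least $1$. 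In either case every child segment has length in $[\eps n/2,n)$, and $g(\eps n/2)=g(n)-\tfrac14$. Hence a good bubble ($k\mapsto k-1$) contributes at most $6B\rho^{3/4}\rho^{g(n)-k}$. A failure ($k$ unchanged) contributes at most $12c_{\ref{prop:no_good_bubbles}}c_{\ref{lem:conditional_prob_given_failure}}(B^{-1/2}+\eps)\rho^{-1/4}\rho^{g(n)-k}$. Taking $\rho=B^{-5/3}$ and $B$ large makes the sum at most $\rho^{g(n)-k}$. In this scheme the weight on the scale (here $\tfrac14$) must be below $\tfrac13$, or no $\rho$ handles both branches. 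This weighted potential is the missing idea.

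Second, conditionally on the full event $G_{x,\ell}$, the pieces to the right of and on top of the bubble are not fresh LHPTs, so you cannot apply the spatial Markov property to them directly. Condition \eqref{it:bottom_height_bound} of \Cref{def:bottom_good} and condition \eqref{it:merge_distance} of \Cref{def:bubble_good} constrain the bottom exploration after the merging time, inside $P_{b_1,b}$. Condition \eqref{it:middle_graph_distance} of \Cref{def:bubble_good} involves paths from $c_1$ that can pass above the top geodesic, into $P_{a_1',c_1}$ and $P_{c_1,b_1'}$. The fix is to treat all six pieces as you treat the bottom ones, bounding
\[\p(\{\chi^{(j)}>t\}\cap G_{x,\ell})\le\p(\chi^{(j)}>t)\,\p(G^1_{x,\ell}).\]
For $j\le4$, use only that $\{\chi^{(j)}>t\}$ is independent of the top exploration, which determines $G^1_{x,\ell}\supseteq G_{x,\ell}$. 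For $j=5,6$, use \eqref{eq:good_ell_resolved}. Then sum over $(x,\ell)$ as above.
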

\begin{remark}
What the proof produces, and what we shall use below, is the same statement for the parallelogram $P_{(0,0),(n,0)}$, which is contained in $Q_n$; and, by the Markov property of the LHPT, the statement holds just as well for the parallelogram $P_{(t,0),(t+n,0)}$ above any portion of length $n$ of the bottom-most geodesic started from $(0,0)$, uniformly in $t \ge 0$, since the part of the LHPT lying to the right of the line $\{t\}\times\Z$ is again an LHPT with $(t,0)$ playing the role of the origin.
\end{remark}
\begin{proof}
\noindent\steplabel{1}{step:distrect_1}\emph{Step 1. Setup and the choice of $\rho$.} For $\eps>0$ small, we set $n=e^{\eps^{-2}}$. We will then be letting $\eps \to 0$ throughout. We will of course use the exploration process, with this choice of $\eps$. Given $\delta \in (0, 1/4)$, we first fix the parameter $u$ from the exploration process such that $u < \delta$ (and $u<1/10$, as required in \Cref{fn:parameters}). (The remaining parameters $R$, $D$ and $B$ are chosen as in \Cref{fn:parameters}, with $B$ subject to the further constraint given below.)

Fix $L \ge 1$. For $n \in \N$, and a non-negative integer $k$, let $A_{n,k}$ be the event
\[A_{n,k}=\{\chi_{L,\eps}((0,0),(n,0))>L-ks\}\]
where 
\[s=\frac{\eta}{L}-L\beta=\frac{D\eps}{16RL}-2\eps^{2-4u} L\]
(here $\beta$ and $\eta$ are as in \Cref{lem:laakso_diamond}). We claim that there is a constant $0<\rho<1$ (which depends on the parameters in \Cref{def:bubble_good}, but not on $\eps$ or $L$) such that for every $L\ge 1$, and for every $n$ and $k$, we have
\begin{equation}\label{eq:tail_bound_chi}
\p(A_{n,k}) \le \rho^{\frac{1}{4}\left \lfloor \frac{\log n}{\log(2/\eps)} \right \rfloor-k}.
\end{equation}
More specifically, we can pick $\rho$ as follows. Let $c_{\ref{prop:no_good_bubbles}}$ and $c_{\ref{lem:conditional_prob_given_failure}}$ be the constants appearing in \Cref{prop:no_good_bubbles} and \Cref{lem:conditional_prob_given_failure}, respectively; recall that these constants do not depend on the parameter $B$. Then choose $0<\rho<1$ and $B$ large enough so that for all $\eps$ sufficiently small,
\[ 6B \rho^{\frac{3}{4}}+12 c_{\ref{prop:no_good_bubbles}}c_{\ref{lem:conditional_prob_given_failure}}\left(\frac{1}{\sqrt{B}}+\eps\right) \rho^{-1/4} \le 1.\]
For example, we can take $\rho=B^{-5/3}$ and $B$ very large. the two terms on the left-hand side are then $6B^{-1/4}$ and $12c_{\ref{prop:no_good_bubbles}}c_{\ref{lem:conditional_prob_given_failure}}\big(B^{-1/12}+\eps B^{5/12}\big)$, and both can be made smaller than $1/2$ by first taking $B$ large and then $\eps$ small.

\noindent\steplabel{2}{step:distrect_2}\emph{Step 2. The induction: base cases and decomposition.} Let us now prove~\eqref{eq:tail_bound_chi}. We argue by induction on $n$. When $k=0$, this is trivial since the left-hand side of~\eqref{eq:tail_bound_chi} is 0; we always have $\chi_{L,\eps}(a,b) \le L$ by definition of $\chi_{L,\eps}$. If $k \ge 1$ and $n \le 16\eps^{-4}$, then this is also trivial because in that case the right-hand side of~\eqref{eq:tail_bound_chi} is at least 1.  (Indeed, $16\eps^{-4}<32\eps^{-5}=(2/\eps)^5$, so that $n<(2/\eps)^5$ and hence $\log n/\log(2/\eps)<5$; thus $g(n) \le 1 \le k$, where $g$ is defined below, and $\rho^{g(n)-k} \ge 1$ because $\rho<1$).

So we may assume from now on that $k \ge 1$ and $n>16\eps^{-4}$.  Let us stress that the induction applies the estimates of \Cref{sec:exploration_process} at the scale $n$, for every such $n$, with $\eps$ held fixed; this is legitimate precisely because the lower bound on $n$ required there is polynomial in $\eps^{-1}$ ($n \ge \eps^{-4}$ suffices, as explained in \Cref{fn:parameters}) and $n>16\eps^{-4}$ here.  It is only the largest scale, at which the proposition is finally applied, that is taken superpolynomially large in $\eps^{-1}$.

Now assume that for all $n'<n$, we have proven~\eqref{eq:tail_bound_chi} holds for all $k \ge 0$ (with $n'$ instead of $n$). For each $x$ and $\ell$, let $G_{x,\ell}$ be the event that the bubble started at $x$ is good and has merging time $\ell$, so that the point $b_1$ of \Cref{lem:laakso_diamond} lies at time $x+\ell$. Also let $F$ be the event that there is a failure (recall this means that either there are no good bubbles, or that an independent $Z \sim \Ber(c_{\ref{prop:no_good_bubbles}}/\sqrt{B})$ is $1$). Then
\[\p(A_{n,k})=\p\!\left( A_{n,k} \cap \left( \bigcup_{x,\ell} G_{x,\ell} \right) \cap \{Z=0\}\right)+\p(A_{n,k} \cap F)=: \Pi_1+\Pi_2. \]

\noindent\steplabel{3}{step:distrect_3}\emph{Step 3. Bounding $\Pi_1$: the case of a good bubble.} We have that
\[\Pi_1 \le \p\!\left( A_{n,k} \cap \left( \bigcup_{x,\ell} G_{x,\ell} \right)\right)\le \sum_{x, \ell} \p(A_{n,k} \cap G_{x,\ell})=\sum_{x, \ell} \p(A_{n,k} \giv G_{x,\ell}) \p(G_{x,\ell}),\]
where the sum runs over the pairs $(x,\ell)$ with $\p(G_{x,\ell})>0$.
When we have a good bubble at $x$, we can repeat the exploration on the six segments around this good bubble shown in \Cref{fig:exploration around good bubble}. Then by \Cref{lem:laakso_diamond} (whose containment hypothesis is satisfied, since $x$ lies in the middle third of $[0,n]$),
\[\p(A_{n,k} \giv G_{x,\ell}) \le \p(\max(\chi^{(1)}, \dots, \chi^{(6)})-s>L-ks \giv G_{x,\ell}) \le \sum_{j=1}^6 \p(\chi^{(j)}>L-(k-1)s \giv G_{x,\ell}).\]
Now given $G_{x,\ell}$,
\begin{enumerate}[(i)]
\item\label{it:mkv_left} The law of the triangulation to the left of the bubble is an LHPT on top of the segment $[0,x]$, and is independent of everything after $x$; this is the region carrying $\chi^{(1)}$.
\item\label{it:mkv_right} The triangulation to the right of the bubble is an LHPT on top of the segment $[x+\ell,n]$, and is independent of everything before $x+\ell$; this is the region carrying $\chi^{(2)}$.
\item\label{it:mkv_top_first} The triangulation on top of the bubble over the segment $[x,x+\eps n/2]$ is an LHPT, and is independent of everything below the bubble, to the left of $x$ and to the right of $x+\eps n/2$; this is the region carrying $\chi^{(3)}$.
\item\label{it:mkv_top_second} The triangulation on top of the bubble over the segment $[x+\eps n/2, x+\ell]$ is an LHPT and is independent of everything below the bubble, to the left of $x+\eps n/2$ and to the right of $x+\ell$; this is the region carrying $\chi^{(4)}$.
\end{enumerate}
Thus, for $j=1,2,3,4$ and any $t$, the event $\{\chi^{(j)}>t\}$ is independent of the top exploration of the bubble started at $x$.  This holds for $j=1$ and $j=2$ by~\eqref{it:mkv_left} and~\eqref{it:mkv_right} since the parallelograms involved are to the left of time $x$ and to the right of time $x+\ell$ respectively, whereas the top exploration lies between these two times.  This holds for $j=3$ and $j=4$ by~\eqref{it:mkv_top_first} and~\eqref{it:mkv_top_second}, i.e., by the spatial Markov property above the top geodesic.  For $j=2$ it is \emph{not}, however, independent of $G_{x,\ell}$.  This is because being good also constrains the bottom exploration \emph{after} the merging time, both through~\eqref{it:bottom_height_bound} of \Cref{def:bottom_good}, whose height bounds are used up to time $R\eps n \ge \ell$, and through~\eqref{it:merge_distance} of \Cref{def:bubble_good}, which concerns a path lying to the right of time $\ell$, and the region which these two conditions constrain meets the parallelogram $P_{b_1,b}$ of the second segment.  We therefore use, for all four of these segments, the product bound which the last two segments require in any case.  Write $G^1_{x,\ell} \supseteq G_{x,\ell}$ for the event of \Cref{lem:conditional_prob_given_good} for the bubble started at $x$, namely that its top exploration is good and that its merging time equals $\ell$; this event is measurable with respect to the top exploration of that bubble, and hence, on the event that the merging time equals $\ell$, with respect to the restriction of the LHPT to $[x,x+\ell] \times \Z$.  Consequently
\[\p\!\big(\{\chi^{(j)}>L-(k-1)s\} \cap G_{x,\ell}\big) \le \p(\chi^{(j)}>L-(k-1)s)\,\p(G^1_{x,\ell}) \quad\text{for $j=1,\dots,4$,}\]
since $G_{x,\ell} \subseteq G^1_{x,\ell}$ and the two events on the right-hand side are independent.
For $\chi^{(5)}$ and $\chi^{(6)}$, we instead use \Cref{lem:conditional_prob_given_good}. For any $t$, the event $\{\chi^{(5)}>t\}$ only depends on the parallelogram above the segment from $a_1$ to $c_2$, and likewise for $\chi^{(6)}$, and by the discussion at the start of this section, those parallelograms have height at most $R^3 \eps^4 n^2$, so that the $\ell$-resolved bound~\eqref{eq:good_ell_resolved} applies, and we have
\[\p\!\big(\{\chi^{(j)}>L-(k-1)s\} \cap G_{x,\ell}\big)\le \p(\chi^{(j)}>L-(k-1)s)\,\p(G^1_{x,\ell}) \quad\text{for $j=5,6$}\]
as well.
Writing
\[g(n)=\frac{1}{4}\left \lfloor \frac{\log n}{\log(2/\eps)} \right \rfloor,\]
it therefore follows from the induction hypothesis (which applies to each of the four scales occurring below, namely $x$, $n-(x+\ell)$, $\eps n/2$ and $\ell-\eps n/2$, since each of them is smaller than $n$ (we have $x \le 2n/3$, $n-(x+\ell) \le 2n/3$, and $\ell \le R\eps n<n$ for $\eps$ small) and at least $\eps n/2>0$ (we have $x \ge n/3$, $n-(x+\ell) \ge n/3-R\eps n$ and $\ell \ge \eps n$)) that
\begin{align}
\Pi_1 &\le \sum_{x, \ell} \left( \p(A_{x,k-1})+\p(A_{n-(x+\ell),k-1})+2\p(A_{\eps n/2,k-1})+2\p(A_{\ell-\eps n/2,k-1})\right)\p(G^1_{x,\ell}) \notag\\
&\le \sum_{x, \ell} \left( \rho^{g(x)-(k-1)}+\rho^{g(n-(x+\ell))-(k-1)}+2\rho^{g(\eps n/2)-(k-1)}+2\rho^{g(\ell-\eps n/2)-(k-1)} \right)\p(G^1_{x,\ell})  \label{eq:P1_recursion}\\
\intertext{Using that $g$ is non-decreasing, that good bubbles always have merging time at least $\eps n$, and that we only look for good bubbles in the middle third of the starting segment (so $n/3 \le x \le 2n/3$) (whence $x \ge n/3 \ge \eps n/2$, $n-(x+\ell) \ge n/3-R\eps n \ge \eps n/2$ and $\ell-\eps n/2 \ge \eps n/2$ for $\eps$ small, so that all four exponents are at least $g(\eps n/2)$), we see that~\eqref{eq:P1_recursion} is bounded from above by}
&\phantom{\le}\qquad\qquad\qquad\qquad\qquad\qquad 6\rho^{g(\eps n/2)-(k-1)} \sum_{x, \ell} \p(G^1_{x, \ell}). \notag
\end{align}
Note also that by~\eqref{eq:good_top_decomposed} we have $\sum_{\ell}\p(G^1_{x,\ell}) \le B\eps^2$ for each $x$, and since we test for good bubbles at at most $\eps^{-2}$ starting times, this gives $\sum_{x,\ell}\p(G^1_{x,\ell}) \le B$.  (The same count applies to $\sum_{x,\ell}\p(G_{x,\ell})$, which is the expected number of good bubbles, since by \Cref{prop:good_bubble_probability} the probability that a bubble is good is at most $B\eps^2$.)  Hence $\Pi_1 \le 6B\rho^{g(\eps n/2)-(k-1)}$.

\noindent\steplabel{4}{step:distrect_4}\emph{Step 4. Bounding $\Pi_2$: the case of a failure.} If we have a failure, we can still break the starting segment into its six sub-segments and do the exploration on each of them. Let $v_1, \dots, v_5$ be the endpoints of the six sub-segments, $v_0=(0,0)$, and $v_6=(n,0)$, and set in this case
\[\chi^{(j)}=\chi_{L,\eps}(v_{j-1},v_j).\]
Since the $v_j$'s all lie on a geodesic, we have that for any $(L,\eps)$-admissible function $f$ on $P_{v_0,v_6}$ (whose restriction to each $P_{v_{j-1},v_j}$ is again admissible, since these parallelograms are contained in $P_{v_0,v_6}$, their segments being sub-segments of the segment from $v_0$ to $v_6$),
\begin{align*}
\|f(v_0)-f(v_6)\|_2 &\le \|f(v_0)-f(v_1)\|_2+\dots+\|f(v_5)-f(v_6)\|_2\\
&\le \chi^{(1)}d(v_0,v_1)+\dots+\chi^{(6)}d(v_5,v_6)\\
&\le \max(\chi^{(1)},\dots,\chi^{(6)})( d(v_0,v_1)+\dots+d(v_5,v_6) )\\
&=\max(\chi^{(1)},\dots,\chi^{(6)}) d((0,0),(n,0)),
\end{align*}
so that
\[\chi_{L,\eps}((0,0),(n,0)) \le \max(\chi^{(1)},\dots,\chi^{(6)}).\]
Then
\begin{align*}
\Pi_2&\le \p(\max(\chi^{(1)},\dots,\chi^{(6)})>L-ks \giv F)\p(F)\\
&\le \left(\sum_{j=1}^6\p(\chi^{(j)}>L-ks \giv F)\right) \cdot 2c_{\ref{prop:no_good_bubbles}}\left(\frac{1}{\sqrt{B}}+\eps\right) \quad\text{(by \Cref{prop:no_good_bubbles})}\\
&\le 12 c_{\ref{prop:no_good_bubbles}}c_{\ref{lem:conditional_prob_given_failure}}\left(\frac{1}{\sqrt{B}}+\eps\right)\p(A_{n/6,k}) \quad\text{(by \Cref{lem:conditional_prob_given_failure})}\\
&\le 12c_{\ref{prop:no_good_bubbles}}c_{\ref{lem:conditional_prob_given_failure}}\left(\frac{1}{\sqrt{B}}+\eps\right) \rho^{g(n/6)-k} \quad\text{(the induction hypothesis)}\\
&\le 12 c_{\ref{prop:no_good_bubbles}}c_{\ref{lem:conditional_prob_given_failure}}\left(\frac{1}{\sqrt{B}}+\eps\right) \rho^{g(\eps n/2)-k}.
\end{align*}
Here the bound $\p(F) \le 2c_{\ref{prop:no_good_bubbles}}(1/\sqrt{B}+\eps)$ used in the second line follows from \Cref{prop:no_good_bubbles} together with the definition of a failure: 
\[\p(F) \le \p(E^c)+\p(Z=1) \le c_{\ref{prop:no_good_bubbles}}\left(\eps+\frac{1}{\sqrt{B}}\right)+\frac{c_{\ref{prop:no_good_bubbles}}}{\sqrt{B}} \le 2c_{\ref{prop:no_good_bubbles}}\left(\frac{1}{\sqrt{B}}+\eps\right).\]
In the third line, \Cref{lem:conditional_prob_given_failure} is applied separately to each of the six events $\{\chi^{(j)}>L-ks\}$, which is an event for the LHPT restricted to the $j$-th sub-segment of the starting segment, and we then use that $\p(\chi^{(j)}>L-ks)=\p(A_{n/6,k})$ for every $j$, since the part of the LHPT on top of each of the six sub-segments is again an LHPT.  In the last line we also used that $g$ is non-decreasing and $\eps n/2 \le n/6$.

\noindent\steplabel{5}{step:distrect_5}\emph{Step 5. Completing the induction.} Putting all this together, we thus have, using the fact that $g(\eps n/2)-g(n)=-1/4$ and our choices of $\rho$ and $B$,
\begin{align*}
\p(A_{n,k}) &\le 6B \rho^{g(\eps n/2)-(k-1)}+ 12 c_{\ref{prop:no_good_bubbles}}c_{\ref{lem:conditional_prob_given_failure}}\left(\frac{1}{\sqrt{B}}+\eps\right) \rho^{g(\eps n/2)-k}\\
&=\left( 6B \rho^{\frac{3}{4}}+12 c_{\ref{prop:no_good_bubbles}}c_{\ref{lem:conditional_prob_given_failure}}\left(\frac{1}{\sqrt{B}}+\eps\right)\rho^{-\frac{1}{4}}\right) \rho^{g(n)-k} \le \rho^{g(n)-k}.
\end{align*}
This completes the proof of~\eqref{eq:tail_bound_chi}.

\noindent\steplabel{6}{step:distrect_6}\emph{Step 6. Conclusion.} Now pick $L=\eps^{2\delta-1/2}$. Then
\[s=\frac{D\eps}{16 R L}-2L\eps^{2-4u}=\frac{D}{16 R}\eps^{\frac{3}{2}-2\delta}-2\eps^{\frac{3}{2}+2\delta-4u}.\]
So for all $\eps$ sufficiently small,
\[\frac{1}{2\Lambda}\eps^{\frac{3}{2}-2\delta} \le s \le \frac{1}{\Lambda}\eps^{\frac{3}{2}-2\delta},\]
where we have set $\Lambda=16R/D$ for convenience.  It is here that the choice $u<\delta$ made in \Cref{step:distrect_1} is used: the second exponent $\frac32+2\delta-4u$ exceeds the first exponent $\frac32-2\delta$ by $4(\delta-u)>0$, so the second term in the expression for $s$ is negligible with respect to the first as $\eps \to 0$. In particular, $s$ is positive, and so for this choice of $L$, we have with $k=\lceil L/s \rceil$ that $L-ks \le 0$. So on the event that $\chi_{L,\eps}((0,0),(n,0)) \le L-ks$, there is no $(L,\eps)$-admissible function from $P_{(0,0),(n,0)}$ into $\ell^2$ (if one existed, then, as noted after the definition of $\chi_{L,\eps}$, we would have $\chi_{L,\eps}((0,0),(n,0)) \ge 1 > 0 \ge L-ks$), and hence, by the discussion following \Cref{def:L-eps_admissible}, the $\ell^2$ distortion of $P_{(0,0),(n,0)}$ is at least $L$. Since the parallelogram $P_{(0,0),(n,0)}$ is contained in $Q_n$ for $\eps$ small (its height is $R\eps^2n^2 \le n^2$), and since the distortion of a subset is at most that of the whole set (a function on $Q_n$ satisfying~\eqref{eq:distortion_def} restricts to one on $P_{(0,0),(n,0)}$), it follows that the distortion of $Q_n$ is at least $L$:
\[\p(\Dis(Q_n) \ge L) \ge \p(A_{n,k}^c) \ge 1-\rho^{g(n)-k} \ge 1-\rho^{g(n)-\frac{L}{s}-1}.\]
But now
\[\Lambda\eps^{4\delta-2} \le \frac{L}{s} \le 2\Lambda\eps^{4\delta-2},\]
so recalling that we had set $n=e^{\eps^{-2}}$,
\[g(n)-\frac{L}{s}-1 \ge \frac{\eps^{-2}}{4\log(2/\eps)}-2\Lambda\eps^{4\delta-2}-2 \ge \eps^{3\delta-2}=(\log n)^{1-\frac{3\delta}{2}}\]
for $\eps$ sufficiently small. Also,
\[L=\eps^{2\delta-\frac{1}{2}}=(\eps^{-2})^{\frac{1}{4}-\delta}=(\log n)^{\frac{1}{4}-\delta}.\]
Hence
\begin{align*}
\p(\Dis(Q_n) \ge (\log n)^{\frac{1}{4}-\delta})&=\p(\Dis(Q_n) \ge L) \ge 1-\rho^{g(n)-\frac{L}{s}-1}\\
&\ge 1-\rho^{(\log n)^{1-\frac{3\delta}{2}}} \to 1.
\end{align*}
Note that what the computation bounds is the distortion of the parallelogram $P_{(0,0),(n,0)}$, that of $Q_n$ being deduced from it; this is the form in which the proposition will be applied.
This concludes the proof.
\end{proof}

The computation at the end of the above proof was quite technical, so it is worth giving a brief overview to explain where the exponent $1/4$ in our distortion lower bound comes from. We showed that there is a $0<\rho<1$ such that for any $k$,
\[\p(\chi_{L,\eps}((0,0),(n,0))>L-ks) \le \rho^{g(n)-k}.\]
Here $s$ was given by
\[s=\frac{\eta}{L}-L\beta.\]
Now clearly we need $s$ to be positive, since we want to show $\chi_{L,\eps}$ is less than 1 with high probability to conclude the non-existence of $(L,\eps)$-admissible functions. This means that we must have $L<(\eta/\beta)^{1/2}$. We want $L$ as large as possible to get the best distortion lower bound, and as $\eps \to 0$, we had that $\eta \approx \eps$ and $\beta \approx \eps^2$, and so we should take $L \approx \eps^{-1/2}$. Note that the way $\beta$ scales as $\eps \to 0$ compared to $\eta$ is precisely due to the fact that boundary length (i.e., vertical height) in the LHPT scales like the square of the graph distance.

So now this means that $s \approx \eps^{3/2}=1/L^3$, and by taking $k=L/s$, we therefore get
\[\p(\chi_{L,\eps}((0,0),(n,0))>0) \le \rho^{\frac{1}{4}\lfloor \log n/ \log(2/\eps) \rfloor-L^{4}}.\]
We need $n$ growing faster than any power of $1/\eps$ to be able to iterate the exploration more often the smaller $\eps$ gets, and so the $\log(2/\eps)$ term is negligible. To get a positive exponent on the right, we should thus have $L$ of order $(\log n)^{1/4}$.

We are now going to deduce from \Cref{prop:distortion_rectangle_lhpt} a similar result which holds for a slice of the LHPT. We write $\CS^{(k)}$ for the slice of initial height $k$ of the LHPT, that is, for the bubble of $\CL$ started at $(0,0)$ with initial height $k$.  We will also let $Z_t$ be the height process associated with the slice.

\begin{proposition}\label{prop:slice_geometry}
Let $0<\delta<1/4$ and let $\CS=\CS^{(n^2)}$ be the slice of the LHPT of initial height $n^2$.  Let $P_n$ be the parallelogram above the portion of the bottom geodesic of $\CS$ running between the times $t_n=\tfrac{1}{2}n^{3/4}$ and $t_n+n^{1/4}$, and let $E_n$  be the event that
\begin{enumerate}[(i)]
\item\label{it:slice_deep} $Z_t \ge n^{5/4}$ for every $0 \le t \le n^{3/4}$, so that in particular $P_n$, whose height is at most $R\sqrt{n}$, is contained in $\CS$;
\item\label{it:slice_diameter} any two vertices of $P_n$ are joined by a path of $\CS$ of length at most $4\sqrt{n}$;
\item\label{it:slice_confined} every path of $\CS$ of length at most $4\sqrt{n}$ started at a vertex of $P_n$ avoids the top geodesic of $\CS$; and
\item\label{it:slice_distortion} $\Dis(P_n) \ge (\log n)^{(1/4)-\delta}$, the distortion being computed with the distances of $\CS$.
\end{enumerate}
Then $\p(E_n) \to 1$ as $n \to \infty$.
\end{proposition}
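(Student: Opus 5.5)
We prove the four items separately and combine them with a union bound, showing that each fails with probability $o(1)$.

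\emph{Item~\eqref{it:slice_deep}.} This is a pure branching-process estimate. $Z$ is a Galton--Watson process started from $n^2$. Write $n^2 = m^2$ with $m=n$ and apply \Cref{lem:dropping_below_strong} with $\gamma = n^{-1/4}$. This gives $\min_{t\le n^{3/4}} Z_t \ge c_{\ref{lem:dropping_below_strong}} n^2 \ge n^{5/4}$ with probability at least $1-e^{-c n^{1/2}}$. The parallelogram $P_n$ has height $R\eps^2 d^2$ with $d=n^{1/4}$, so its height is at most $R\sqrt n$. Hence on this event $P_n$ lies strictly below the top geodesic of $\CS$ and inside $\CS$.

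\emph{Items~\eqref{it:slice_diameter} and~\eqref{it:slice_confined}.} Both follow from the LHPT distance lemmas. For~\eqref{it:slice_diameter}, take any vertex $(t,h)$ of $P_n$, with $h\le R\sqrt n$ above the bottom geodesic. Apply \Cref{lem:upper_bound_distances} at the time $t$, with $r^2=h\le R\sqrt n$, so $r\lesssim n^{1/4}$, and with $x$ a large power of $\log n$. This gives a path of length $O(n^{1/4}\log n)$ down to the bottom geodesic. The path stays inside the bubble between the two points, hence inside $\CS$ on~\eqref{it:slice_deep}. Horizontal moves along the bottom geodesic cost at most $n^{1/4}$. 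A union bound over the polynomially many vertices of $P_n$ gives all these paths simultaneously, with total length $\le 4\sqrt n$ with room to spare.

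For~\eqref{it:slice_confined}, use \Cref{lem:lower_bound_distances} with $r=4\sqrt n$, $t_0=t_n$ and $\Gamma$ the bottom geodesic, and work conditionally on $\CL_{[0,t_0-3r]}$. With probability $1-\kappa$, no path of length $<r$ from $\Gamma$ near time $t_0$ reaches height $Kr^2=16Kn$ above $\Gamma$. Any vertex of $P_n$ is within $O(n^{1/4}\log n)\ll \sqrt n$ of $\Gamma$ by the previous paragraph, so a path of length $4\sqrt n$ from $P_n$ gives a path of length $<5\sqrt n$ from $\Gamma$, and we apply the lemma with $r=5\sqrt n$. On~\eqref{it:slice_deep} the top geodesic lies at height $\ge n^{5/4}\gg Kn$. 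So the path would have to climb above height $Kr^2$, which is impossible; by planarity, crossing the top geodesic would require exactly that. This gives probability $1-\kappa$ for every fixed $\kappa$, hence $1-o(1)$ after letting $\kappa\to0$ slowly.

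\emph{Item~\eqref{it:slice_distortion}.} This is the main step. \Cref{prop:distortion_rectangle_lhpt} and its remark give, uniformly in $t$, $\Dis(P_{(t,0),(t+m,0)})\ge(\log m)^{1/4-\delta'}$ with high probability, for distances computed in $\CL$. We apply it with $m=n^{1/4}$ and $t=t_n$; since $\log m=\frac14\log n$, adjusting $\delta'<\delta$ absorbs the constant. It remains to pass from distances in $\CL$ to distances in $\CS$. We always have $d^{\CS}\ge d^{\CL}$, so a lower bound on distortion does not automatically transfer. The plan is to show that on~\eqref{it:slice_diameter} and~\eqref{it:slice_confined} the two metrics agree on $P_n$. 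Any two points of $P_n$ are at $\CL$-distance $\le 4\sqrt n$; a better bound comes from an upper-bound path inside $\CS$, as in~\eqref{it:slice_diameter}. Then every $\CL$-geodesic between them has length $\le 4\sqrt n$ and starts in $P_n$. Such a geodesic cannot touch the top geodesic of $\CS$. It also cannot cross the bottom geodesic into the region below, since that would need a detour below height $0$, which is excluded in the same way, applying \Cref{lem:lower_bound_distances} to the mirror side or noting that the bottom-most geodesic from $(0,0)$ bounds $\CL_{\ge 0}$. Finally, it cannot exit $\CS$ to the right, since $t_n+n^{1/4}+4\sqrt n\le n^{3/4}$, and by~\eqref{it:slice_deep} the slice is still alive there. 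So the geodesic stays in $\CS$, and $d^{\CS}=d^{\CL}$ on $P_n$. Hence $\Dis(P_n)$ is the same in both metrics, which gives~\eqref{it:slice_distortion}.

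The delicate point is verifying that $\CL$-geodesics between points of $P_n$ do not leave $\CS$ through its bottom boundary. The bottom geodesic of $\CS$ is the bottom-most geodesic from $(0,0)$, and the LHPT as constructed lives only above it in the slice picture (the slice occupies heights $\ge0$). So the only real exits are the top geodesic and the right end, both handled above.
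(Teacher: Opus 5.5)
Items (i)--(iii) are handled correctly. For (i) and (ii) you use \Cref{lem:dropping_below_strong} and \Cref{lem:upper_bound_distances}, the latter with a union bound over the vertices of $P_n$. The paper instead uses \Cref{lem:tail_bound_minimum} and the extinction times $\zeta_t$ of the forests rooted between $(t,0)$ and $(t,R\sqrt n)$. Both routes work.

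The gap is in item (iv), at exactly the point you flag as delicate. It is not true that $\CL$ lives only above the bottom geodesic of $\CS$. The LHPT contains all the vertices $(i,j)$ with $j<0$, namely the trees $\CG_j$, $j<0$, and the Boltzmann triangulations filling their slots. The line $\Z_{\ge 0}\times\{0\}$ is a geodesic running through the interior of $\CL$, not part of its boundary (apart from its starting point). A $\CL$-geodesic between two points of $P_n$ may therefore dip into this lower region, and nothing you prove rules this out. \Cref{lem:lower_bound_distances}, even in a mirrored form (which the paper does not provide), would only stop short paths from travelling a distance of order $Kr^2$ away from $\Gamma$. It says nothing about crossing $\Gamma$. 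So the identity $d^{\CS}=d^{\CL}$ on $P_n$, and with it the transfer of the distortion bound, is not justified as written.

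What is missing is a surgery argument, not a confinement argument.
\begin{itemize}
\item The bottom geodesic separates $\CL$, and both endpoints lie in $\CS$. So any part of a path of length at most $4\sqrt n$ from $x$ to $y$ that leaves $\CS$ through the bottom geodesic is an excursion from a vertex $u$ of that geodesic back to a vertex $u'$ of it.
\item Both $u$ and $u'$ carry times in $[t_n-4\sqrt n,\,t_n+n^{1/4}+4\sqrt n]$.
\item Replace each such excursion by the segment of the bottom geodesic from $u$ to $u'$. That segment is a geodesic of $\CL$, so the length does not increase.
\item The segment lies in $\CS$, because by (i) the slice is alive up to time $n^{3/4}\ge t_n+n^{1/4}+4\sqrt n$.
\end{itemize}
Your argument for (iii) in fact covers all paths of $\CL$, so the top geodesic is never reached; together with the time bounds, this gives $d^{\CS}\le d^{\CL}$ on $P_n$, hence equality. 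This is exactly the content of \Cref{lem:slice_distances}. The paper proves it separately, applies it with $\mathfrak{M}=\CL$, $t_0=t_n$, $m=n^{1/4}$ and $D=4\sqrt n$, and reuses it later for the transfer to the UIPT and the sphere.
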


The proof makes use of the following deterministic lemma which will also be used to transfer the distortion lower bound from the LHPT to the sphere.

\begin{lemma}\label{lem:slice_distances}
Let $\CS$ be a slice (with corresponding height process $Z_t$), let $D<t_0$ and $m$ be positive integers and let $P$ be a set of vertices of $\CS$ whose time coordinates lie between $t_0$ and $t_0+m$. Assume that the following conditions hold:
\begin{enumerate}[(i)]
\item\label{it:inh_alive} $Z_t>0$ for every $t \le t_0+m+D$;
\item\label{it:inh_diameter} any two vertices of $P$ are joined by a path of $\CS$ of length at most $D$; and
\item\label{it:inh_confined} every path of $\CS$ of length at most $D$ started at a vertex of $P$ avoids the top geodesic of $\CS$.
\end{enumerate}
Let $\mathfrak{M}$ be a triangulation which contains $\CS$ as a sub-triangulation. Then $d_{\mathrm{gr}}^{\mathfrak{M}}=d_{\mathrm{gr}}^{\CS}$ on $P$; in particular $\Dis(\mathfrak{M}) \ge \Dis(P)$, the distortion of $P$ computed with the distances of $\CS$.
\end{lemma}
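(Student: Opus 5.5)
Fix $x,y \in P$. Since $\CS$ is a sub-triangulation of $\mathfrak{M}$, every path in $\CS$ is a path in $\mathfrak{M}$, so $d_{\mathrm{gr}}^{\mathfrak{M}}(x,y) \le d_{\mathrm{gr}}^{\CS}(x,y)$. It remains to prove the reverse inequality. By \eqref{it:inh_diameter} we have $d_{\mathrm{gr}}^{\CS}(x,y) \le D$, so it suffices to show the following: every path $\pi$ in $\mathfrak{M}$ from $x$ of length at most $D$ stays inside $\CS$, and in fact never reaches a vertex of $\partial\CS$ other than bottom-geodesic vertices, apart from edges of $\CS$. Indeed, a geodesic of $\mathfrak{M}$ from $x$ to $y$ has length $d_{\mathrm{gr}}^{\mathfrak{M}}(x,y)\le D$. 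If it lies in $\CS$, then its length is at least $d_{\mathrm{gr}}^{\CS}(x,y)$, which gives equality.

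\emph{Boundary of $\CS$.} Consider the boundary of $\CS$ as a region of the plane. Before the coalescence time it consists of four pieces: the bottom geodesic, the top geodesic, the vertical boundary segment $\{0\}\times[0,k]$ (on $\partial\CL$), and the coalescence point. Assumption \eqref{it:inh_alive} guarantees that coalescence happens after time $t_0+m+D$. Any path of length at most $D$ started in $P$ has time coordinate in $(t_0-D,\,t_0+m+D]$. This is because time changes by at most one per step (the graph-distance lower bound $|i-i'|$ holds in $\CL$, hence in $\CS$). Since $t_0>D$, the path also never touches time $0$.

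\emph{Main step.} Take a path $\pi$ in $\mathfrak{M}$ from $x$ of length at most $D$, and suppose it leaves $\CS$. Stop it at the first vertex $v$ after which it uses an edge, or enters a face, not in $\CS$. That vertex lies on $\partial\CS$. The initial portion up to $v$ is a path of $\CS$ of length at most $D$ from a vertex of $P$. By \eqref{it:inh_confined}, $v$ is not on the top geodesic. By the time bounds above, $v$ is neither on the left boundary segment nor the coalescence point. So $v$ lies on the bottom geodesic.

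\emph{Crossing the bottom geodesic.} This is the subtle point, and I would handle it by the following shortcut argument. The bottom geodesic $\Gamma$ is a geodesic in $\CS$. Suppose $\pi$ leaves through $\Gamma$ at $v$ and later returns to $\CS$. Let $w$ be the first re-entry vertex. By the same reasoning, applied to the prefix of $\pi$ up to $w$ (or to the reversed path from $y$), $w$ also lies on $\Gamma$. The excursion between $v$ and $w$ can then be replaced by the segment of $\Gamma$ joining them. I would argue this segment is no longer than the excursion, because $\Gamma$ is a geodesic in $\mathfrak{M}$ near $P$, not just in $\CS$. To justify that, compare time coordinates: the portion of $\Gamma$ between $v$ and $w$ has length equal to the difference of their times. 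I expect this to be the main obstacle, since $\mathfrak{M}$ is arbitrary below $\Gamma$, so a shortcut outside $\CS$ is not obviously excluded. If the time argument fails to control $\mathfrak{M}$, I would fall back on interpreting ``contains $\CS$ as a sub-triangulation'' so that $\Gamma$ is part of a geodesic structure in $\mathfrak{M}$. In the application to the sphere, $\Gamma$ will be a geodesic toward the hull root. Replacing all excursions yields a path inside $\CS$ of length at most that of $\pi$, and the claimed equality follows.

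\emph{Distortion.} Once $d_{\mathrm{gr}}^{\mathfrak{M}}=d_{\mathrm{gr}}^{\CS}$ on $P$, any embedding of $\mathfrak{M}$ restricts to an embedding of $(P,d_{\mathrm{gr}}^{\CS})$ with the same distortion. Hence $\Dis(\mathfrak{M})\ge\Dis(P)$.
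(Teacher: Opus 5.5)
Your route is the paper's: the first exit of an $\mathfrak{M}$-path from $\CS$ can only occur at a vertex of the bottom geodesic $\Gamma$ (by the time bound, $D<t_0$, (i) and (iii)), and excursions are then replaced by segments of $\Gamma$.

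\textbf{The step you flag.} The obstacle is real: your ``time argument'' cannot be derived from (i)--(iii) alone. Here is a counterexample to the lemma as literally stated. Fill the complement of $\CS$ with a triangulated disk containing a chord between the vertices of $\Gamma$ at times $t_0$ and $t_0+m$. Take $2\le m\le D$, let $P$ consist of these two vertices, and take the slice tall enough for (iii). Every hypothesis holds, but $d_{\mathrm{gr}}^{\mathfrak{M}}=1<m=d_{\mathrm{gr}}^{\CS}$ on $P$.

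The paper's proof relies on the same unstated ingredient: its ``Since it's a geodesic from $u$ to $u'$'' means a geodesic of $\mathfrak{M}$. That property is checked where the lemma is applied. For $\mathfrak{M}=\CL$, the time coordinate changes by at most one per step. In the hull/sphere setting, a slice vertex at time $t$ is at distance $n-t$ from the root $\rho$, so $|d_{\mathrm{gr}}^{\mathfrak{M}}(\rho,u)-d_{\mathrm{gr}}^{\mathfrak{M}}(\rho,v)|\le d_{\mathrm{gr}}^{\mathfrak{M}}(u,v)$.

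So your fallback is the right fix, but state it as an explicit hypothesis rather than an ``interpretation'': the time coordinate is $1$-Lipschitz for $d_{\mathrm{gr}}^{\mathfrak{M}}$ on the vertices of $\CS$. Then an excursion from $v$ to $w$ has length at least $d_{\mathrm{gr}}^{\mathfrak{M}}(v,w)\ge|t_v-t_w|$, which is the length of the $\Gamma$-segment, exactly as you intended.

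\textbf{Locating the re-entry vertex.} There is also a small gap here. Condition (iii) concerns only paths of $\CS$. Neither the prefix of $\pi$ up to the first re-entry $w$ (it has left $\CS$) nor the reversed path from $y$ to $w$ (it may contain later excursions) need be such a path. So you cannot conclude $w\in\Gamma$.

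Take instead the first exit vertex $v$ and the \emph{last} re-entry vertex $w$. The prefix from $x$ to $v$ and the reversed suffix from $y$ to $w$ are both paths of $\CS$ of length at most $D$ started in $P$. Hence $v$ and $w$ both lie on $\Gamma$, at times in $[t_0-D,t_0+m+D]$, which is before the coalescence time by (i). Replace the whole portion of $\pi$ between $v$ and $w$ by the $\Gamma$-segment joining them. Under the Lipschitz hypothesis, this gives a path of $\CS$ from $x$ to $y$ no longer than $\pi$, with no need to track intermediate excursions.

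This also avoids the paper's assertion that every boundary vertex visited by $\pi$ lies on the bottom geodesic, which likewise applies (iii) to paths that have left $\CS$.

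Relatedly, your opening reduction (``every path \dots\ stays inside $\CS$'') overstates what is proved. The argument only shows that such a path can be replaced by one inside $\CS$ that is no longer.
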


\begin{proof}
Every path of $\CS$ is a path of $\mathfrak{M}$, so $d_{\mathrm{gr}}^{\mathfrak{M}} \le d_{\mathrm{gr}}^{\CS}$ on $P$.  Conversely, let $x,y \in P$ and let $\pi$ be a path of $\mathfrak{M}$ from $x$ to $y$, which by~\eqref{it:inh_diameter} we may assume to be of length at most $D$.  A vertex of $\CS$ which is not on its boundary has all of its neighbors in $\mathfrak{M}$ among the vertices of $\CS$, the faces of $\CS$ incident to it being faces of $\mathfrak{M}$ which already surround it; the path $\pi$ can therefore leave $\CS$ only from a vertex of its boundary. The boundary of $\CS$ consists of three parts: the starting vertical segment and the top and bottom geodesics run until they coalesce.  Every vertex $u$ of that boundary visited by $\pi$ carries a time coordinate which is within $D$ of the time coordinate of $x$, so that its time lies in $[t_0-D,t_0+m+D]$; by $D<t_0$ and~\eqref{it:inh_alive} it does not lie on the initial segment. By ~\eqref{it:inh_confined}, it cannot lie on the top geodesic. Hence, it must lie on the bottom geodesic.

Finally, if $\pi$ exits from the bottom geodesic at $u$ and then re-enters from $u'$, then we can modify the portion of $\pi$ from $u$ to $u'$ by simply following the bottom geodesic between them. Since it's a geodesic from $u$ to $u'$, this modification can only decrease the length, and the portion from $u$ to $u'$ is now in $\CS$. Doing this to all the exits and re-entries of $\pi$ from the bottom geodesic gives that the length of $\pi$ is at least $d_{\mathrm{gr}}^{\CS}(x,y)$.

Thus $d_{\mathrm{gr}}^{\CS}(x,y) \le d_{\mathrm{gr}}^{\mathfrak{M}}(x,y)$.  The metric which $\mathfrak{M}$ induces on $P$ is therefore that of $\CS$, and the $\ell^2$ distortion of a metric space is at least that of any of its subspaces.
\end{proof}

\begin{proof}[Proof of \Cref{prop:slice_geometry}]
The height of the slice is a Galton--Watson process with offspring distribution $\theta$ started from $n^2$, so by \Cref{lem:tail_bound_minimum}, applied with $x=n^2$, with $n^{3/4}$ in place of $n$ and with $n^{5/4}$ in the role of the parameter called $a$ there, the probability that~\eqref{it:slice_deep} fails is at most
\[\frac{n^{5/4}}{(n^{3/4}+1)^2}+\exp\left(-\frac{n^2}{(2n^{3/4}+1)^2}\right) \le \frac{1}{n^{1/4}}+e^{-\sqrt{n}/9}\]
for $n$ large, where we used $(1-y)^\alpha \ge 1-\alpha y$ for the first term and $1-y \le e^{-y}$ for the second.  Since the height of $P_n$ is at most $R\sqrt{n}<n^{5/4}$ for $n$ large, on the event~\eqref{it:slice_deep} the parallelogram $P_n$ is contained in $\CS$, and so are the trees rooted between $(t,0)$ and $(t,R\sqrt{n})$ for any $t \le n^{3/4}$.

Let $\zeta_t$ denote the extinction time of the forest formed by those trees.  The bottom-most geodesic started from a vertex $(t,h)$ of $P_n$ has coalesced with the one started from $(t,0)$, which is the bottom geodesic of the slice, by the time $t+\zeta_t$; the corresponding portion of it is a path of $\CS$ of length at most $\zeta_t$ joining $(t,h)$ to that geodesic.  By~\eqref{eq:bubble_lifetime},
\[\p(\zeta_t>\sqrt{n})=1-\left(1-\frac{1}{(\sqrt{n}+1)^2}\right)^{R\sqrt{n}} \le \frac{R\sqrt{n}}{(\sqrt{n}+1)^2} \le \frac{R}{\sqrt{n}},\]
so that, by a union bound over the $n^{1/4}+1$ times $t \in [t_n,t_n+n^{1/4}]$, with probability at least $1-2R n^{-1/4}$ we have $\zeta_t \le \sqrt{n}$ for all of them.  On that event any two vertices of $P_n$ are joined by a path of $\CS$ of length at most $4\sqrt{n}$: each of the two descents just described costs at most $\sqrt{n}$, and their feet, which lie on the bottom geodesic at two times differing by at most $n^{1/4}+\sqrt{n} \le 2\sqrt{n}$, are joined along that geodesic by a path of that length.  This is~\eqref{it:slice_diameter}.

Let now $\kappa>0$ and let $K=K(\kappa)$ be the constant of \Cref{lem:lower_bound_distances}.  We apply that lemma with $r=6\sqrt{n}$, with $h_0=0$ (so that $\Gamma$ is the bottom geodesic of the slice, which is the line $\Z_{\ge0}\times\{0\}$, whence $h_\Gamma \equiv 0$) and with $t_n$ in the role of the parameter called $t_0$ there.  With probability at least $1-\kappa$, every path of $\CL$ of length less than $6\sqrt{n}$ started at a vertex $(i,0)$ with $|i-t_n| \le 12\sqrt{n}$ then visits only vertices of height less than $Kr^2=36Kn$, which is less than $n^{5/4}$ for $n$ large.  Suppose that this event and the two preceding ones all occur, and let $\gamma$ be a path of $\CS$ of length at most $4\sqrt{n}$ started at a vertex of $P_n$ and meeting the top geodesic, at a vertex $(t,Z_t)$ say.  Preceding $\gamma$ by the reversal of the descent of length at most $\sqrt{n}$ produced above turns it into a path of $\CL$ of length at most $5\sqrt{n}<6\sqrt{n}$ started at a vertex $(i,0)$ with $i \in [t_n,t_n+n^{1/4}+\sqrt{n}]$, so that $|i-t_n| \le 12\sqrt{n}$, and reaching the height $Z_t \ge n^{5/4}$.  This is impossible, and~\eqref{it:slice_confined} follows.

Finally, \Cref{prop:distortion_rectangle_lhpt}, applied to the segment from $(t_n,0)$ to $(t_n+n^{1/4},0)$ and with $\delta/2$ in place of $\delta$, gives that with probability tending to $1$ the distortion of $P_n$ computed with the distances of $\CL$ is at least $(\log n^{1/4})^{(1/4)-\delta/2}$, which is at least $(\log n)^{(1/4)-\delta}$ for $n$ large, the ratio of the two being $4^{-(1/4)+\delta/2}(\log n)^{\delta/2} \to \infty$.  On the three events above the distances of $\CL$ agree on $P_n$ with those of $\CS$, by \Cref{lem:slice_distances} applied with $\mathfrak{M}=\CL$, $t_0=t_n$, $m=n^{1/4}$ and $D=4\sqrt{n}$.  Indeed $D<t_n$, the slice has $Z_t>0$ up to the time $t_n+n^{1/4}+4\sqrt{n} \le n^{3/4}$ and its trees are not truncated, and in $\CL$ the time coordinate of a vertex changes by at most one along each step of a path (see \Cref{subsec:outline}).  Hence~\eqref{it:slice_distortion} holds as well on the intersection of the four events, whose probability is at least $1-\kappa-o(1)$.  As $\kappa>0$ was arbitrary and $E_n$ does not depend on it, $\p(E_n) \to 1$.
\end{proof}

\section{From the LHPT to the sphere}
\label{sec:LHPT to sphere}
The final step in the proof of \Cref{thm:main} is to transfer the distortion bound of \Cref{prop:slice_geometry} to finite triangulations of the sphere. To do so, we will make use of two results from \cite{cl2019fpp} regarding the absolute continuity properties of the law of triangulations of the sphere with respect to the laws of the UIPT and the LHPT.

We begin by recalling some terminology and notation from \cite{cl2019fpp}. We let $\T_{n,1}$ be the set of all (Type~I) triangulations of the $1$-gon with $n$ inner vertices, that is, of the planar maps with $n+1$ vertices all of whose faces are triangles with the exception of one distinguished face, the \emph{external} face, whose boundary is a cycle of length~$1$.  Such a map is rooted at that boundary loop, and its \emph{inner} vertices are the $n$ vertices which are not incident to it.  We let $\CT_n^{(1)}$ be a random variable chosen uniformly in $\T_{n,1}$, and denote its root by $\rho_n$.  The superscript $(1)$ distinguishes these triangulations of the $1$-gon from the triangulations $\CT_n$ of the sphere appearing in \Cref{thm:main}.  The two are related by the bijection recalled below, which matches $\T_{n,1}$ with the set of rooted triangulations of the sphere with $n+1$ vertices, so that it is $\CT_{n+1}$ and $\CT_n^{(1)}$ which correspond to one another.  We similarly let $\CT_\infty^{(1)}$ be the $n \to \infty$ local limit of the law of $\CT_n^{(1)}$ (this is the Type~I UIPT $\CT_\infty$ of \Cref{subsec:outline}, up to the modification of the map near the root given by the bijection described in the next paragraph).  We let $\ol{\CT}_n^{(1)}$ be the triangulation $\CT_n^{(1)}$ with a distinguished vertex $o_n$, which is selected uniformly at random from the set of vertices distinct from the root.  We then let $B_r^{\bullet}(\ol{\CT}_n^{(1)})$ and $B_r^{\bullet}(\CT_\infty^{(1)})$ be the hulls associated with these triangulations (where the marked point of the former is $o_n$ and the latter is $\infty$). The former is the union of the ball of radius $r$ around the root and all connected components of the complement of the ball that do not contain $o_n$ (if $o_n$ belongs to the ball, then we set $B_r^{\bullet}(\ol{\CT}_n^{(1)})=\ol{\CT}_n^{(1)}$) and the latter is defined analogously.

Triangulations of the $1$-gon are the same thing as rooted triangulations of the sphere, in the following precise sense (this is what will identify the law appearing in \Cref{thm:main}).  Let $n \ge 2$ and let $\mathfrak{T}$ be a triangulation of the sphere with $n+1$ vertices, rooted at an oriented edge with initial vertex $\rho$ and terminal vertex $v$.  Split that edge into two parallel edges $e_1$ and $e_2$, which then bound a $2$-gon face, and draw inside that face a loop $e_0$ at $\rho$; it cuts the $2$-gon into a $1$-gon bounded by $e_0$ and a triangle bounded by $e_0$, $e_1$ and $e_2$.  Declaring the $1$-gon to be the external face, and $e_0$ (oriented so that the external face lies to its right) to be the root edge, produces an element $\Phi(\mathfrak{T})$ of $\T_{n,1}$.  Conversely, let $\mathfrak{M} \in \T_{n,1}$ have boundary loop $e_0$ at $\rho$.  The face of $\mathfrak{M}$ lying on the inner side of $e_0$ is a triangle, whose two other edges $e_1$ and $e_2$ join $\rho$ to a vertex $v$; deleting $e_0$ merges the external $1$-gon with that triangle into a $2$-gon bounded by $e_1$ and $e_2$, and identifying $e_1$ with $e_2$ then produces a triangulation of the sphere with $n+1$ vertices, which we root at the resulting edge, oriented away from $\rho$ (formally, in the direction in which $e_1$ is traversed when the boundary of the above triangle is followed with the triangle on its left, starting from $e_0$).  The two constructions are inverse to each other, so that $\Phi$ is a bijection from the set of rooted triangulations of the sphere with $n+1$ vertices onto $\T_{n,1}$.

We say that a forest $\mathfrak{F}$ with a distinguished vertex is $(p,q,r)$-admissible if:
\begin{enumerate}[(i)]
\item\label{it:forest1} the forest consists of an ordered sequence $(\tau_1,\dots,\tau_q)$ of $q$ rooted plane trees,
\item\label{it:forest2} the maximal height of these trees is $r$,
\item\label{it:forest3} the total number of vertices of the forest at generation $r$ is $p$,
\item\label{it:forest4} the distinguished vertex has height $r$,
\item\label{it:forest5} the distinguished vertex is in $\tau_1$.
\end{enumerate}

We let $\F_{p,q,r}$ be the set of all $(p,q,r)$-admissible forests.  We let $\F_{p,q,r}'$ be the set of all pointed forests satisfying the same properties as the $(p,q,r)$-admissible forests, except that we do not require property~\eqref{it:forest5} saying that the distinguished vertex belongs to the first tree in the forest.  Finally, we let $\F_{p,q,r}''$ be the set of all non-pointed forests satisfying the same properties~\eqref{it:forest1}--\eqref{it:forest3} as $(p,q,r)$-admissible forests except that no special vertex is distinguished (thus properties~\eqref{it:forest4} and~\eqref{it:forest5} become irrelevant).  Only the last of these three sets is used below; we have recalled the other two, and with them properties~\eqref{it:forest4} and~\eqref{it:forest5}, because this is the form in which the results of \cite{cl2019fpp} are stated.

Much like the LHPT, the hull annulus $B_s^\bullet(\CT_\infty^{(1)}) \setminus B_r^\bullet(\CT_\infty^{(1)})$ in the UIPT can be encoded by a cycle of trees. This cycle of trees is called its \emph{skeleton}, and the triangulation $B_s^\bullet(\CT_\infty^{(1)}) \setminus B_r^\bullet(\CT_\infty^{(1)})$ can be reconstructed from it in the same way as for the LHPT.  In particular, the skeleton consists of $L_s$ trees indexed by the cycle $\partial B_s^\bullet(\CT_\infty^{(1)})$, the vertices of the forest at generation $j$ are in bijection with the vertices of the cycle $\partial B_{s-j}^\bullet(\CT_\infty^{(1)})$ (as in the LHPT, where the skeleton vertices at generation $i$ are in bijection with the vertices of the line $\{i\}\times\Z$) and the slots between consecutive branches are filled in by independent Boltzmann triangulations. See \cite[Section 2.2]{cl2019fpp} for the detailed definition of the skeleton in this case.

For $1 \le r < s$, we let $\CF_{r,s}^{(1)}$ be the skeleton of $B_s^\bullet(\CT_\infty^{(1)}) \setminus B_r^\bullet(\CT_\infty^{(1)})$.  We also let $\wt{\CF}_{r,s}^{(1)}$ be the non-pointed forest obtained by a random cyclic permutation of the trees of $\CF_{r,s}^{(1)}$ (so that the first tree in $\wt{\CF}_{r,s}^{(1)}$ is the tree of index $K$ in $\CF_{r,s}^{(1)}$, where $K$ is chosen uniformly at random in $\{1,\dots,L_s\}$ and $L_s$ is the boundary length of $\partial B_s^\bullet(\CT_\infty^{(1)})$ and also ``forgetting'' the distinguished vertex at generation $s-r$.  On the event $\{L_r = p\} \cap \{L_s = q\}$, $\wt{\CF}_{r,s}^{(1)}$ is a random element of the set $\F_{p,q,s-r}''$ introduced just above. For $0<a<1$, we let $N_r^{(a)}$ be uniformly distributed over $\{ \lfloor a r^2 \rfloor +1, \dots, \lfloor  a^{-1} r^2 \rfloor\}$ and we let $(\tau_i)_{i \ge 1}$ be a sequence of i.i.d.\ Galton--Watson trees with offspring distribution $\theta$ which is independent of $N_r^{(a)}$.  For each $i \ge 1$ and $j \ge 0$, we denote by $[\tau_i]_j$ the tree $\tau_i$ truncated at generation $j$.

\begin{lemma}[{\cite[Proposition 5]{cl2019fpp}}]
\label{lem:lhpt_to_uipt}
For each $0<a<1$, there exists a constant $C_1$ such that for every sufficiently large $r \in \N$ and every $s \ge r+1$, for every choice of $p,q \in \N$ with $a r^2 < p,q  \le a^{-1} r^2$, for every forest $\mathfrak{F} \in \F_{p,q,s-r}''$ we have
\[ \p( \wt{\CF}_{r,s}^{(1)}=\mathfrak{F}) \le C_1 \p(  ([\tau_1]_{s-r},\dots,[\tau_{N_r^{(a)}}]_{s-r}) = \mathfrak{F}).\]
\end{lemma}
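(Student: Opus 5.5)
This is \cite[Proposition 5]{cl2019fpp}, and the plan is to reconstruct its proof from the explicit law of the skeleton of the hull annulus in the UIPT. The starting point is the formula from \cite[Section 2.2]{cl2019fpp}. Let $\mathfrak{F}\in\F_{p,q,s-r}''$ be a forest with $q$ trees, $p$ vertices at generation $s-r$, and offspring numbers $(k_v)$ at the vertices $v$ of generations $<s-r$. Then
\[\p(\CF_{r,s}^{(1)}\ \text{has non-pointed, rotated version}\ \mathfrak{F})\approx \frac{C(q)}{C(p)}\prod_{v}\theta(k_v),\]
where $C(p)$ is the explicit normalising function from the UIPT hull boundary law, essentially $C(p)\propto h(p)\,\alpha^{p}$ with $h(p)\asymp \sqrt p$. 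Two ingredients combine into this formula: the Boltzmann triangulations filling the slots contribute $\theta(k_v)$ up to a factor of the form $c^{k_v}$, and the law of the UIPT near the hull contributes the ratio of partition functions of the two boundary cycles. The geometric factors $c^{\sum k_v}$ telescope into a function of $p$ and $q$ alone, because $\sum_v k_v$ equals the total number of vertices minus $q$. This is the one place where some care with the counting is needed.

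Next I pass from pointed to non-pointed forests. The true skeleton carries a distinguished vertex at generation $s-r$ and a rooting on the cycle. Forgetting the point costs a factor of $p$: summing over the $p$ choices of distinguished vertex gives a factor $p$, offset by $1/p$ from the uniform marking. The uniform cyclic rotation contributes a factor $1/q$, and summing over rotations gives a matching factor of $q$. After these adjustments,
\[\p(\wt{\CF}_{r,s}^{(1)}=\mathfrak{F})=\Phi_{r,s}(p,q)\prod_v\theta(k_v)\]
for an explicit $\Phi_{r,s}$, with $\prod_v\theta(k_v)=\p(([\tau_1]_{s-r},\dots,[\tau_q]_{s-r})=\mathfrak{F})$.

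On the comparison side, independence of $N_r^{(a)}$ gives
\[\p\big(([\tau_i]_{s-r})_{i\le N_r^{(a)}}=\mathfrak{F}\big)=\p(N_r^{(a)}=q)\prod_v\theta(k_v),\]
and $\p(N_r^{(a)}=q)\asymp (a^{-1}-a)^{-1}r^{-2}$ since $ar^2<q\le a^{-1}r^2$. The lemma therefore reduces to showing that $\Phi_{r,s}(p,q)\le C_1 r^{-2}$ uniformly over $p,q\in(ar^2,a^{-1}r^2]$ and $s>r$.

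The key step, and the main obstacle, is this uniform bound on $\Phi_{r,s}(p,q)$. It amounts to a ratio of the form $\frac{q\,h(q)}{p\,h(p)}\cdot\frac{1}{q}$ times a normalisation, bounded through the asymptotics $h(p)\sim c\sqrt p$. Since $p$ and $q$ are both comparable to $r^2$, the ratio $h(q)/h(p)$ is bounded by a constant depending on $a$. The leftover factor of order $1/q\asymp r^{-2}$ is exactly what matches $\p(N_r^{(a)}=q)$. The delicate point is to confirm that the exponential factors $\alpha^{q-p}$ cancel exactly with the geometric weights coming from the Boltzmann partition functions, leaving only polynomial factors. I would first verify this cancellation directly using the Catalan-type formula for $\theta$ given in Step 2 of the proof of \Cref{lem:generating_function}. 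The requirement that $r$ be sufficiently large enters only through replacing $h$ by its asymptotic form.
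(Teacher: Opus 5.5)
The paper does not prove this lemma; it imports it verbatim from \cite[Proposition 5]{cl2019fpp}. Your outline has the right overall shape: write down the explicit skeleton law, then compare it with $\p(N_r^{(a)}=q)\prod_v\theta(k_v)$. However, the step meant to produce the factor $r^{-2}$ is wrong, and the estimate that actually produces it is missing.

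\emph{The weight you start from is a conditional law.} The weight $\frac{C(q)}{C(p)}\rho^{|\Delta|}$ becomes $\frac{h(q)}{h(p)}\prod_v\theta(k_v)$ after summing over slot fillings, with $h(p)=12^{-p}C(p)$. This is the law of the pointed skeleton of the hull of radius $s-r$ of the UIPT of the $p$-gon. By the spatial Markov property it is the \emph{conditional} law of $\CF^{(1)}_{r,s}$ given $L_r=p$.

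\emph{The pointing count is off.} In that law the distinguished vertex is the root of $\partial B_r^\bullet$, which is fixed by the geometry. There is no uniform marking, hence no $1/p$ to offset the factor $p$. There are exactly $p$ pairs (pointed forest, rotation index) mapping to a given $\mathfrak F\in\F''_{p,q,s-r}$, one per vertex of generation $s-r$, and each pair has rotation probability $1/q$. Hence
\[\p\big(\wt{\CF}^{(1)}_{r,s}=\mathfrak F\giv L_r=p\big)=\frac{p\,h(q)}{q\,h(p)}\prod_v\theta(k_v),\]
and for $p,q\asymp r^2$ this prefactor is of order $1$, not $r^{-2}$. Your leftover $1/q$ comes from two errors that cancel in order of magnitude: dropping this factor $p$, and dropping the unconditional weight $\p(L_r=p)$. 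The conditional law alone cannot give the lemma.

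\emph{The missing estimate.} What you need is the local bound $\p(L_r=p)\le C_a r^{-2}$ for $p\le a^{-1}r^2$. It follows from the same skeleton formula applied to $B_r^\bullet$. The skeleton of $B_s^\bullet$ is the annulus skeleton with a forest $\mathcal F'\in\F_{1,p,r}$ (the skeleton of $B_r^\bullet$) grafted on its $p$ vertices of generation $s-r$. The joint weight is $\frac{h(q)}{h(1)}\prod_{\mathcal F}\theta\prod_{\mathcal F'}\theta$. Summing over $\mathcal F'$ and using \Cref{lem:generating_function}, which gives $\p_1(X_r=1)=f_r'(0)=(r+1)^{-3}$, one gets
\[\p(L_r=p)=\frac{h(p)}{h(1)}\,\p_1(X_r=1)\,\p_1(X_r=0)^{p-1}=\frac{h(p)}{h(1)}\,(r+1)^{-3}\Big(1-\frac{1}{(r+1)^2}\Big)^{p-1}.\]
Therefore
\[\p\big(\wt{\CF}^{(1)}_{r,s}=\mathfrak F\big)=\frac{p\,h(q)}{q\,h(1)}\,(r+1)^{-3}\Big(1-\frac{1}{(r+1)^2}\Big)^{p-1}\prod_v\theta(k_v)\le C_a\,r^{-2}\prod_v\theta(k_v),\]
using $h(q)\asymp\sqrt q$; this bound is uniform in $s$. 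Since $\p(N_r^{(a)}=q)\ge a\,r^{-2}$, the lemma follows.

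The cancellation of geometric factors that you flag as the main obstacle is routine. The real content is this local estimate on the hull perimeter.
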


We now use the above to transfer \Cref{prop:distortion_rectangle_lhpt} to an $\ell^2$ distortion lower bound for the UIPT.

\begin{proposition}\label{prop:distortion_uipt}
For every $0<\delta<1/4$, we have
\[\p(\Dis(B_n^{\bullet}(\CT_\infty^{(1)})) \ge (\log n)^{(1/4)-\delta}) \to 1\]
as $n \to \infty$.
\end{proposition}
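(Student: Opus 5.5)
We will find, inside the hull annulus $B_n^\bullet(\CT_\infty^{(1)})\setminus B_r^\bullet(\CT_\infty^{(1)})$, a slice of the skeleton that looks like the slice $\CS^{(m^2)}$ of \Cref{prop:slice_geometry}. We then transfer the event $E_m$ of that proposition by the absolute continuity of \Cref{lem:lhpt_to_uipt}. Finally, we use \Cref{lem:slice_distances} with $\mathfrak{M}=B_n^\bullet(\CT_\infty^{(1)})$ to pass from distances in the slice to distances in the hull.

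\emph{Choice of scales and window.} Take $r=n/2$, so the annulus has depth $s-r=n/2$ (with $s=n$). Take a slice of initial height $m^2$ with $m=\sqrt{n}$, started from a uniformly chosen vertex of $\partial B_n^\bullet$. Any fixed power of $n$ works, since the target is only $(\log n)^{1/4-\delta}$ and $\log m\asymp\log n$; we also use \Cref{prop:slice_geometry} with $\delta/2$ to absorb constants. The relevant time window is $[0,m^{3/4}]$, which is much shorter than $n/2$. The boundary lengths $L_r,L_n$ are of order $r^2$ and $n^2$ with high probability. Precisely, for every $\kappa$ there is $a>0$ with $\p(ar^2<L_r,L_n\le a^{-1}r^2)\ge 1-\kappa$; this is standard for the UIPT, e.g.\ from \cite{cl2019fpp}, and we would take this from the paper's cited results. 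We apply \Cref{lem:lhpt_to_uipt} with $r$ and $s=n$ (with $a$ adjusted so that both $p$ and $q$ lie in the range; $q\asymp 4r^2$ is fine). After the uniform cyclic rotation, the first $m^2$ trees of $\wt{\CF}^{(1)}_{r,n}$, truncated at depth $n/2$, together with the Boltzmann triangulations in their slots, form the slice started at a uniform boundary point.

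\emph{Transfer of the event.} Let $E'$ be the event that the first $m^2$ trees of the forest (truncated), and the triangulations filling their slots, satisfy conditions \eqref{it:slice_deep}--\eqref{it:slice_distortion} of \Cref{prop:slice_geometry}. Each condition is determined by the slice alone, using distances inside $\CS$ and only times up to $m^{3/4}\ll n/2$, so truncation does not matter. Summing the bound of \Cref{lem:lhpt_to_uipt} over forests in $E'^c$, and using that the slot fillings are independent Boltzmann triangulations in both models, gives
\[\p(E'^c\cap\{ar^2<L_r,L_n\le a^{-1}r^2\})\le C_1\,\p(E_m^c\text{ for an i.i.d.\ forest})=C_1\,o(1).\]
Here we use that $N_r^{(a)}\ge ar^2\gg m^2$, so the first $m^2$ trees are i.i.d.\ $\theta$-trees, exactly as in the LHPT slice. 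One subtlety is that in the hull the skeleton trees run from $\partial B_n^\bullet$ inward. The orientation and the left/right convention must therefore match those of the LHPT, with time $t$ corresponding to $\partial B_{n-t}^\bullet$, and I expect this to need only bookkeeping.

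\emph{Distance comparison, the main obstacle.} On $E'$ we want \Cref{lem:slice_distances} with $\mathfrak{M}=B_n^\bullet(\CT_\infty^{(1)})$. Hypothesis \eqref{it:inh_alive} follows from \eqref{it:slice_deep}, and \eqref{it:inh_diameter} and \eqref{it:inh_confined} are items \eqref{it:slice_diameter} and \eqref{it:slice_confined}. The lemma's proof uses that a path leaving the slice must pass through its boundary, and that the starting vertical segment is far in time. In the hull, "time" is $n$ minus the distance to the root. Distance to the root changes by at most $1$ per step, and the initial segment is $\partial B_n^\bullet$, at time $0$. These are the two facts the lemma needs. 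The bottom geodesic segments in the hull are still geodesics of the hull, because time is a distance function. So the lemma applies, giving $\Dis(B_n^\bullet)\ge\Dis(P_m)\ge(\log m)^{1/4-\delta/2}\ge(\log n)^{1/4-\delta}$ for large $n$. Letting $\kappa\to0$ finishes the proof. The delicate point is checking that the lemma's planar argument — that a path can exit only through boundary vertices of the slice, and only through the bottom geodesic — remains valid inside the hull rather than in $\CL$. I would verify this by noting that the hull is a triangulation containing the slice as a sub-triangulation, which is all the lemma assumes.
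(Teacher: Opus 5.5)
Your proposal is correct and follows essentially the same route as the paper. The paper takes the slice spanned by $n=(\sqrt n)^2$ consecutive trees of the randomly rotated skeleton of $B_n^\bullet\setminus B_{n/2}^\bullet$, compares it with the LHPT slice via \Cref{lem:lhpt_to_uipt} on the event that $L_{n/2},L_n$ are of order $n^2$, invokes \Cref{prop:slice_geometry} at scale $\sqrt n$ with $\delta/2$, and concludes with \Cref{lem:slice_distances} for $\mathfrak{M}$ the hull, using that time equals $n$ minus the distance to the root.
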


\begin{proof}
\noindent\steplabel{1}{step:distuipt_1}\emph{Step 1. Choice of the parameter $a$.} Fix $\eps>0$, and assume for notational simplicity that $n$ is even, so that $n/2$ is an integer (otherwise replace $n/2$ by $\lfloor n/2\rfloor$ throughout). By \cite[Theorem 2]{cl2017peel}, we have that $L_n/n^2$ converges in distribution to the $\Gam(3/2)$ distribution. It follows that if $a>0$ is sufficiently small, then for all $n$ sufficiently large,
\[\p(A_1) \ge 1-\frac{\eps}{2} \quad\text{for}\quad A_1=\{an^2 \le L_n \le a^{-1}n^2\} \cap \{an^2 \le L_{n/2} \le a^{-1}n^2\}.\]
We fix this choice of $a$ for the rest of the proof.

\noindent\steplabel{2}{step:distuipt_2}\emph{Step 2. The slice.} Let $\CS$ be the slice spanned by the first $n$ trees of $\wt{\CF}_{n/2,n}^{(1)}$, that is, the sub-triangulation of the annulus $B_n^\bullet(\CT_\infty^{(1)}) \setminus B_{n/2}^\bullet(\CT_\infty^{(1)})$ lying between the bottom-most geodesics started at the two ends of the corresponding arc of $\partial B_n^\bullet(\CT_\infty^{(1)})$, built from those $n$ trees and from the Boltzmann triangulations filling their slots. Its initial height is $n=(\sqrt{n})^2$, which is the height for which \Cref{prop:slice_geometry} is stated with $\sqrt{n}$ in place of $n$.  Its vertex at time $t$ and height $h$ lies on the cycle $\partial B_{n-t}^\bullet(\CT_\infty^{(1)})$, and its height $Z_t$ is the population at generation $t$ of the forest formed by those $n$ trees.  On $A_1$ we have $L_n \ge an^2 \ge n$ for $n$ large, so that $\CS$ is then well defined.

Being a deterministic function of those $n$ trees and of the fillings of the slots which they index, which are i.i.d.\ Boltzmann triangulations independent of the forest both here and in the LHPT, the slice can be compared with its LHPT counterpart.  Summing the bound of \Cref{lem:lhpt_to_uipt} over the forests with the lemma applied with $r=n/2$, $s=n$ and with $a/4$ in place of $a$ (every pair $p,q \in [an^2,a^{-1}n^2]$ satisfies $(a/4)(n/2)^2<p,q\le(a/4)^{-1}(n/2)^2$ and on $A_1$ the forest belongs to $\F_{p,q,n/2}''$ for such a pair) and then integrating over the fillings shows that for every set $\s$ of slices,
\begin{equation}\label{eq:slice_comparison}
\p(\{\CS \in \s\} \cap A_1) \le C_1\,\p(\CS' \in \s),
\end{equation}
where $\CS'$ is the slice spanned by the first $n$ trees of $([\tau_1]_{n/2},\dots,[\tau_{N}]_{n/2})$, $N=N_{n/2}^{(a/4)}$, with independent Boltzmann fillings.  Here $n<(a/4)(n/2)^2<N$ for $n$ large, so that $\CS'$ is well defined, and $\CS'$ is the slice $\CS^{(n)}$ of initial height $n$ of an LHPT, up to the truncation of its trees at generation $n/2$.

\noindent\steplabel{3}{step:distuipt_3}\emph{Step 3. The set $\mathbf{G}_n$.} Let $E$ be the event of \Cref{prop:slice_geometry}, applied with $\sqrt{n}$ in place of $n$ and with $\delta/2$ in place of $\delta$.  Note that $\p(\CS' \notin E) \to 0$.  Let $\mathbf{G}_n$ be the set of pairs consisting of a hull of radius $n$ and of a marked vertex on its boundary cycle for which the slice determined by the pair as in \Cref{step:distuipt_2} belongs to $E$.  By~\eqref{eq:slice_comparison},
\[\p\big((B_n^\bullet(\CT_\infty^{(1)}),w_n) \notin \mathbf{G}_n\big) \le \p(A_1^c)+\p(\{\CS \notin E\} \cap A_1) \le \frac{\eps}{2}+C_1\,\p(\CS' \notin E)\]
for $n$ large.  For each fixed $\eps>0$ the second term tends to $0$. The left-hand side thus tends to $0$ since $\eps>0$ was arbitrary and $E$ does not depend on $\eps$.

\noindent\steplabel{4}{step:distuipt_4}\emph{Step 4. Conclusion.} Let $(\mathfrak{H},w) \in \mathbf{G}_n$, let $\mathfrak{S}$ be the slice which this pair determines, let $\rho$ be the root of $\mathfrak{H}$, and let $\mathfrak{M}$ be a triangulation containing $\mathfrak{H}$ as a sub-triangulation and in which the distances from $\rho$ to the vertices of $\mathfrak{H}$ are those of $\mathfrak{H}$.  Then $\mathfrak{S}$ is a sub-triangulation of $\mathfrak{M}$, and its vertex at time $t$ and height $h$ lies on the cycle $\partial B_{n-t}^\bullet(\mathfrak{H})$, hence is at distance $n-t$ from $\rho$ in $\mathfrak{H}$ and so also in $\mathfrak{M}$. Consequently $|d_{\mathrm{gr}}^{\mathfrak{M}}(\rho,u)-d_{\mathrm{gr}}^{\mathfrak{M}}(\rho,v)| \le d_{\mathrm{gr}}^{\mathfrak{M}}(u,v)$ for all vertices $u,v$ of $\mathfrak{S}$ carrying time coordinates.  Since $\mathfrak{S} \in E$, \Cref{lem:slice_distances} applies, with $t_0=\tfrac12 n^{3/8}$, $m=n^{1/8}$ and $D=4n^{1/4}$, the trees of $\mathfrak{S}$ being truncated at the generation $n/2>n^{3/8}$.  With $P$ the parallelogram of \Cref{prop:slice_geometry} we have that
\[\Dis(\mathfrak{M}) \ge \Dis(P) \ge (\log \sqrt{n})^{(1/4)-\delta/2} \ge (\log n)^{(1/4)-\delta}\]
for $n$ large, the distortion of $P$ computed with the distances of $\mathfrak{S}$, the second inequality being property~\eqref{it:slice_distortion} of $E$, and the third holding because the ratio of the last two quantities is $2^{-(1/4)+\delta/2}(\log n)^{\delta/2} \to \infty$.  Together with \Cref{step:distuipt_3}, and taking $\mathfrak{M}=\mathfrak{H}=B_n^\bullet(\CT_\infty^{(1)})$, this proves both assertions of the proposition.
\end{proof}

All that remains now is to show how to go from the UIPT to the sphere. Let $\C_{1,r}$ be the set of all triangulations of the cylinder of height $r$ and with bottom cycle of length $1$.  For $\mathfrak{T} \in \C_{1,r}$, we let $N(\mathfrak{T})$ be such that the number of vertices in $\mathfrak{T}$ is given by $N(\mathfrak{T})+1$. We then have the following.

\begin{lemma}[{\cite[Lemma 22]{cl2019fpp}}]
\label{lem:uipt_to_sphere}
There exists a constant $\ol{c}$ such that, for every $n \ge 1$, for every $r \ge 1$ and $\mathfrak{T} \in \C_{1,r}$ such that $n > N(\mathfrak{T})$,
\[ \p( B_r^\bullet(\ol{\CT}_n^{(1)}) = \mathfrak{T}) \le \ol{c} \left( \frac{n}{n-N(\mathfrak{T})} \right)^{3/2} \p( B_r^\bullet(\CT_\infty^{(1)}) = \mathfrak{T}).\]
\end{lemma}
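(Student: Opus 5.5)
The bound compares two exact expressions for the same hull probability, one for the pointed finite map and one for the UIPT, rather than estimating anything probabilistically. Fix $\mathfrak{T} \in \C_{1,r}$ with top cycle of length $p$, and write $N=N(\mathfrak{T})$. Let $\T_{k,p}$ denote the set of Type~I triangulations of the $p$-gon with $k$ inner vertices, and $\#\T_{k,p}$ its cardinality.

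\textbf{Exact expression for the pointed finite map.} On the event $\{B_r^\bullet(\ol{\CT}_n^{(1)})=\mathfrak{T}\}$ with $\mathfrak{T}$ a proper hull, $\CT_n^{(1)}$ is obtained by gluing an element of $\T_{n-N',p}$ into the hole of $\mathfrak{T}$. Here $N'$ is the number of vertices of $\mathfrak{T}$ not on its top cycle, minus the boundary vertex of the $1$-gon; it differs from $N$ by at most $p$ and a constant, and I would fix the bookkeeping convention carefully. The marked vertex $o_n$ must then be one of the inner vertices of the filling. This gives
\[
\p\big(B_r^\bullet(\ol{\CT}_n^{(1)})=\mathfrak{T}\big)=\frac{\#\T_{n-N',p}\,(n-N')}{\#\T_{n,1}\; n}.
\]
The factor $n$ in the denominator is the number of choices of $o_n$ (the vertices other than the root).

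\textbf{Exact expression for the UIPT.} By the same decomposition, the UIPT hull probability is the $n\to\infty$ limit of the analogous expression without pointing. Using the known asymptotics $\#\T_{k,p}\sim C(p)\,\rho^{k}k^{-5/2}$ with $\rho=12\sqrt3$, this limit is
\[
\p\big(B_r^\bullet(\CT_\infty^{(1)})=\mathfrak{T}\big)=\frac{C(p)}{C(1)}\,\rho^{-N'}.
\]
The hull here is determined by the marked point at infinity, in the same way as the point $o_n$ does for the finite map.

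\textbf{Uniform enumeration bounds and conclusion.} The key input is a pair of uniform estimates: there are constants $c,c'$ such that for all $k\ge1$ and $p\ge1$,
\[
\#\T_{k,p}\le c\,C(p)\,\rho^{k}k^{-5/2},
\qquad
\#\T_{n,1}\ge c'\,C(1)\,\rho^{n}n^{-5/2}.
\]
Plugging these into the ratio of the two displays gives
\[
\frac{\p(B_r^\bullet(\ol{\CT}_n^{(1)})=\mathfrak{T})}{\p(B_r^\bullet(\CT_\infty^{(1)})=\mathfrak{T})}
\le \frac{c}{c'}\Big(\frac{n}{n-N'}\Big)^{5/2}\frac{n-N'}{n}
=\bar c\Big(\frac{n}{n-N'}\Big)^{3/2}.
\]
The ratio of the two hull probabilities then reduces to this expression in $N'$; passing from $N'$ to $N$ is immediate once the two are identified by the convention, or costs only a constant if they differ.

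\textbf{Main obstacle.} The step I expect to be hardest is the upper bound on $\#\T_{k,p}$, uniform in $p$ and not only as $k\to\infty$ at fixed $p$. I would use Krikun's explicit formula
\[
\#\T_{k,p}=4^{k-1}\frac{p\,(2p)!\,(2p+3k-5)!!}{(p!)^2\,k!\,(2p+k-1)!!},
\]
read off $C(p)$ from it, and show that $\#\T_{k,p}\,\rho^{-k}k^{5/2}/C(p)$ is bounded. Writing the double factorials via Gamma functions, I would apply Stirling's formula with explicit error bounds, or compare consecutive values in $k$ to get monotonicity.
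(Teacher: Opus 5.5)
Your proposal is correct in substance and is the standard argument. The paper does not prove this lemma itself but quotes \cite[Lemma 22]{cl2019fpp}, and that lemma rests on the same comparison you make, namely of
\[
\p\big(B_r^\bullet(\ol{\CT}_n^{(1)})=\mathfrak{T}\big)=\frac{(n-N)\,\#\T_{n-N,p}}{n\,\#\T_{n,1}}
\quad\text{with}\quad
\p\big(B_r^\bullet(\CT_\infty^{(1)})=\mathfrak{T}\big)=\frac{C(p)}{C(1)}\,(12\sqrt3)^{-N},
\]
where $N=N(\mathfrak{T})$, together with a bound $\#\T_{k,p}\le c\,C(p)(12\sqrt3)^k k^{-5/2}$ that is uniform in $k,p\ge1$.

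One correction to your bookkeeping. The $p$ vertices of the top cycle belong to $\mathfrak{T}$: they are counted among its $N(\mathfrak{T})+1$ vertices, and they are boundary vertices of the filling, not inner ones. So the filling has exactly $(n+1)-(N+1)=n-N$ inner vertices, and $N'=N$ on the nose. Your stated definition of $N'$ gives $N-p$, which is wrong. Your fallback that a discrepancy ``costs only a constant'' would also be false: a shift of order $p$ changes $(12\sqrt3)^{-N}$ by a factor $(12\sqrt3)^{p}$.

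The step you flag as the main obstacle is easy. In Krikun's formula the factor $p(2p)!/(p!)^2$ cancels against $C(p)=\frac{3^{p-2}p(2p)!}{4\sqrt{2\pi}(p!)^2}$. Replacing $p$ by $p+1$ then multiplies $\#\T_{k,p}/C(p)$ by $\frac{2p+3k-3}{3(2p+k+1)}<1$, so $\sup_{p}\#\T_{k,p}/C(p)=\#\T_{k,1}/C(1)$. Both of your uniform estimates therefore follow from the single limit $\#\T_{k,1}/\big(C(1)(12\sqrt3)^k k^{-5/2}\big)\to1$, together with positivity for each $k\ge1$. No explicit Stirling error terms are needed.
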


\begin{proof}[Proof of \Cref{thm:main}]
\noindent\steplabel{1}{step:main_1}\emph{Step 1. Two events of high probability.} Fix $\eps>0$ and $\delta>0$.  We assume without loss of generality that $\delta<1/4$. By \cite[Theorem 1 (ii)]{miermont2006invprinc}, we have that $d_{\mathrm{gr}}(\rho_n,o_n)/n^{1/4}$ converges in distribution to a positive, non-degenerate random variable $Z$ (recall that $\rho_n$ denotes the root of $\ol{\CT}_n^{(1)}$, and $o_n$ is a vertex picked uniformly at random from the set of vertices distinct from the root). It follows that there is an $a>0$ such that for all $n$ sufficiently large,
\[\p(A_1) \ge 1-\frac{\eps}{2} \quad\text{for}\quad A_1 = \{ d_{\mathrm{gr}}(\rho_n,o_n) \ge an^{1/4}\}.\]
By the convergence of the rescaled profile of distances to $o_n$ to a measure which assigns positive weight to any neighborhood of $0$, there is a $0<b<1$ such that for all sufficiently large $n$,
\[\p(A_2) \ge 1-\frac{\eps}{2} \quad\text{for}\quad A_2 = \left\{ \left|B_{\frac{an^{1/4}}{2}}(o_n) \right| \ge bn \right\}. \]
(here $B_r(o_n)$ denotes the ball of radius $r$ around $o_n$). See \cite[Equation (62)]{cl2019fpp} and the discussion surrounding it.

\noindent\steplabel{2}{step:main_2}\emph{Step 2. The hull around the root is not too large.} On the event that both $A_1$ and $A_2$ occur, we have that $B_{n^{1/8}}^\bullet(\ol{\CT}_n^{(1)})$ contains at most $(1-b)n+1$ vertices for all $n$ sufficiently large, since $B_{(an^{1/4})/2}(o_n)$ is connected and disjoint from the ball of radius $n^{1/8}$ around the root.

\noindent\steplabel{3}{step:main_3}\emph{Step 3. Transfer from the UIPT to the sphere.} Write $m=n^{1/8}$. Now for any triangulation $\mathfrak{T}$ with $(1-b)n+1$ or fewer vertices, we have $N(\mathfrak{T}) \le (1-b)n$, hence $n-N(\mathfrak{T}) \ge bn$, and therefore
\[\p( B_{m}^\bullet(\ol{\CT}_n^{(1)}) = \mathfrak{T}) \le \ol{c} \left( \frac{1}{b} \right)^{3/2} \p( B_{m}^\bullet(\CT_\infty^{(1)}) = \mathfrak{T})\]
by \Cref{lem:uipt_to_sphere}. Let $\mathbf{G}_m$ be the set of pairs from the proof of \Cref{prop:distortion_uipt}, applied at radius $m$ and with $\delta/2$ in place of $\delta$, and let $w$ denote the vertex of the boundary cycle of the hull of radius $m$ which is selected by the uniform random rotation of its skeleton.  Conditionally on the hull, $w$ is uniform on that cycle in both models, so the pointwise comparison of hull laws extends to the pair consisting of the hull and $w$.  On the event $A_1 \cap A_2$ the hull $B_m^\bullet(\ol\CT_n^{(1)})$ takes values among the triangulations with at most $(1-b)n+1$ vertices, by \Cref{step:main_2}, and summing the resulting bound over the pairs which do not belong to $\mathbf{G}_m$ therefore gives
\[\p\big(\{(B_m^\bullet(\ol\CT_n^{(1)}),w) \notin \mathbf{G}_m\} \cap A_1 \cap A_2\big) \le \ol{c}\left(\frac{1}{b}\right)^{3/2}\p\big((B_m^\bullet(\CT_\infty^{(1)}),w) \notin \mathbf{G}_m\big).\]
The right-hand side tends to $0$ as $n \to \infty$ by \Cref{prop:distortion_uipt}, so that the left-hand side is at most $\eps$ for all $n$ sufficiently large.

\noindent\steplabel{4}{step:main_4}\emph{Step 4. From the hull to the whole triangulation.} The triangulation $\ol\CT_n^{(1)}$ contains its hull $B_m^\bullet(\ol\CT_n^{(1)})$ as a sub-triangulation, and the distances from the root to the vertices of that hull are the same in $\ol\CT_n^{(1)}$ as in the hull itself.  \Cref{prop:distortion_uipt} therefore gives that on the event $\{(B_m^\bullet(\ol\CT_n^{(1)}),w) \in \mathbf{G}_m\}$,
\[\Dis(\ol\CT_n^{(1)}) \ge (\log m)^{(1/4)-\delta/2} \ge (\log n)^{(1/4)-\delta} \quad\text{(recall $m=n^{1/8}$)}\]
for $n$ large.  By \Cref{step:main_1,step:main_2,step:main_3}, the event above intersected with $A_1 \cap A_2$ has probability at least $1-2\eps$ for all $n$ sufficiently large.

\noindent\steplabel{5}{step:main_5}\emph{Step 5. From the $1$-gon to the sphere.} Recall that the statement of \Cref{thm:main} is for a uniformly random rooted Type~I triangulation $\CT_N$ of the sphere with $N$ vertices. It is thus left to transfer the distortion lower bound from $\ol\CT_n^{(1)}$ to the case of $\CT_N$.  Note that the marked points associated with $\ol\CT_n^{(1)}$ do not affect the underlying graph, so that $\Dis(\ol\CT_n^{(1)})=\Dis(\CT_n^{(1)})$, and $\CT_n^{(1)}$ is uniform on $\T_{n,1}$.  The distance-preserving bijection $\Phi$ between the set of rooted triangulations of the sphere with $n+1$ vertices and $\T_{n,1}$ described at the beginning of this section combined with the previous paragraph yields
\[\p\!\big(\Dis(\CT_{n+1}) \ge (\log n)^{(1/4)-\delta}\big) \ge 1-2\eps\]
for all $n$ sufficiently large.  Now let $\delta' \in (0,1/4)$ be given and apply the above with $\delta=\delta'/2$ and with $N-1$ in place of $n$.  Then we obtain
\[\p\!\big(\Dis(\CT_N) \ge (\log N)^{(1/4)-\delta'}\big) \ge 1-2\eps\]
for all $N$ sufficiently large.  Since $\eps$ was arbitrary, the proof of the theorem is complete.
\end{proof}

\bibliographystyle{abbrv}
\bibliography{bibliography}

\end{document}